\documentclass[11pt]{amsart}
\usepackage{anysize} \marginsize{1in}{1in}{1in}{1in}

\usepackage{xcolor}
\usepackage{amsmath}

\makeatletter
\newcommand*\rel@kern[1]{\kern#1\dimexpr\macc@kerna}
\newcommand*\widebar[1]{%
  \begingroup
  \def\mathaccent##1##2{%
    \rel@kern{0.8}%
    \overline{\rel@kern{-0.8}\macc@nucleus\rel@kern{0.2}}%
    \rel@kern{-0.2}%
  }%
  \macc@depth\@ne
  \let\math@bgroup\@empty \let\math@egroup\macc@set@skewchar
  \mathsurround\z@ \frozen@everymath{\mathgroup\macc@group\relax}%
  \macc@set@skewchar\relax
  \let\mathaccentV\macc@nested@a
  \macc@nested@a\relax111{#1}%
  \endgroup
}
\makeatother

\usepackage{mathtools}
\usepackage[all]{xy}
\usepackage[utf8]{inputenc}
\usepackage{varioref}
\usepackage{amsfonts}
\usepackage{amssymb}
\usepackage{bbm}
\usepackage{xparse}

\usepackage{graphicx}
\usepackage{mathrsfs}

\usepackage{tikz,tikz-cd}
\usetikzlibrary{matrix,arrows}
\makeatletter
\tikzset{
  edge node/.code={%
      \expandafter\def\expandafter\tikz@tonodes\expandafter{\tikz@tonodes #1}}}
\makeatother
\tikzset{
  subseteq/.style={
    draw=none,
    edge node={node [sloped, allow upside down, auto=false]{$\subseteq$}}},
  Subseteq/.style={
    draw=none,
    every to/.append style={
      edge node={node [sloped, allow upside down, auto=false]{$\subseteq$}}}
  }
}

\usepackage[OT2,T1]{fontenc}
\DeclareSymbolFont{cyrletters}{OT2}{wncyr}{m}{n}
\DeclareMathSymbol{\Sha}{\mathalpha}{cyrletters}{"58}

\usepackage[hypertexnames=false,backref=page,pdftex,
 	pdfpagemode=UseNone,
 	breaklinks=true,
 	extension=pdf,
 	colorlinks=true,
 	linkcolor=blue,
 	citecolor=red,
 	urlcolor=blue,
 ]{hyperref}

\DeclareUnicodeCharacter{2212}{-}
\DeclareUnicodeCharacter{02BC}{'}

\let\oldeqref\eqref
\makeatletter
\RenewDocumentCommand\eqref{s m}{%
  \IfBooleanTF#1%
  {\textup{\tagform@{\ref*{#2}}}}
  {\oldeqref{#2}}
}
\makeatother

\newcommand{\sE}{{\mathcal E}}
\newcommand{\sF}{{\mathcal F}}

\newcommand{\sI}{{\mathcal I}}

\newcommand{\sO}{{\mathcal O}}

\newcommand{\Z}{{\mathbb Z}}

\newcommand{\End}{{\operatorname{End}}}
\newcommand{\Ext}{{\operatorname{Ext}}}

\newcommand{\Hom}{\operatorname{Hom}}

\renewcommand{\iff}{\Leftrightarrow}

\newcommand{\Pic}{\operatorname{Pic}}

\newcommand{\rk}{{\rm rk}}

\newcommand{\supp}{\operatorname{supp}}

\renewcommand{\to}[1][]{\xrightarrow{\ #1\ }}

\makeatletter
\newcommand*{\da@rightarrow}{\mathchar"0\hexnumber@\symAMSa 4B }
\newcommand*{\da@leftarrow}{\mathchar"0\hexnumber@\symAMSa 4C }
\newcommand*{\xdashrightarrow}[2][]{%
  \mathrel{%
    \mathpalette{\da@xarrow{#1}{#2}{}\da@rightarrow{\,}{}}{}%
  }%
}
\newcommand{\xdashleftarrow}[2][]{%
  \mathrel{%
    \mathpalette{\da@xarrow{#1}{#2}\da@leftarrow{}{}{\,}}{}%
  }%
}
\newcommand*{\da@xarrow}[7]{%
  \sbox0{$\ifx#7\scriptstyle\scriptscriptstyle\else\scriptstyle\fi#5#1#6\m@th$}%
  \sbox2{$\ifx#7\scriptstyle\scriptscriptstyle\else\scriptstyle\fi#5#2#6\m@th$}%
  \sbox4{$#7\dabar@\m@th$}%
  \dimen@=\wd0 %
  \ifdim\wd2 >\dimen@
    \dimen@=\wd2 %
  \fi
  \count@=2 %
  \def\da@bars{\dabar@\dabar@}%
  \@whiledim\count@\wd4<\dimen@\do{%
    \advance\count@\@ne
    \expandafter\def\expandafter\da@bars\expandafter{%
      \da@bars
      \dabar@ 
    }%
  }%
  \mathrel{#3}%
  \mathrel{%
    \mathop{\da@bars}\limits
    \ifx\\#1\\%
    \else
      _{\copy0}%
    \fi
    \ifx\\#2\\%
    \else
      ^{\copy2}%
    \fi
  }%
  \mathrel{#4}%
}
\makeatother

\newtheoremstyle{citing}
  {}
  {}
  {\itshape}
  {}
  {\bfseries}
  {\textbf{.}}
  {.5em}
  {\thmnote{#3}}

\theoremstyle{plain}

\newtheorem{theorem}[subsection]{Theorem}

\newtheorem{lemma}[subsection]{Lemma}
\newtheorem{corollary}[subsection]{Corollary}

\newtheorem{proposition}[subsection]{Proposition}

\theoremstyle{remark}
\newtheorem{example}[subsection]{Example}

\theoremstyle{definition}
\newtheorem{conjecture}[subsection]{Conjecture}
\newtheorem{definition}[subsection]{Definition}

\numberwithin{equation}{section}

\theoremstyle{remark}
\newtheorem{remark}[subsection]{Remark}

\theoremstyle{citing}

\makeatletter
\newsavebox\myboxA
\newsavebox\myboxB
\newlength\mylenA

\newcommand*\xtilde[2][0.8]{%
    \sbox{\myboxA}{$\m@th#2$}%
    \setbox\myboxB\null
    \ht\myboxB=\ht\myboxA%
    \dp\myboxB=\dp\myboxA%
    \wd\myboxB=#1\wd\myboxA
    \sbox\myboxB{$\m@th\widetilde{\copy\myboxB}$}
    \setlength\mylenA{\the\wd\myboxA}
    \addtolength\mylenA{-\the\wd\myboxB}%
    \ifdim\wd\myboxB<\wd\myboxA%
       \rlap{\hskip 0.5\mylenA\usebox\myboxB}{\usebox\myboxA}%
    \else
        \hskip -0.5\mylenA\rlap{\usebox\myboxA}{\hskip 0.5\mylenA\usebox\myboxB}%
    \fi}

\newbox\usefulbox

\def\getslant #1{\strip@pt\fontdimen1 #1}

\def\xxtilde #1{\mathchoice
 {{\setbox\usefulbox=\hbox{$\m@th\displaystyle #1$}%
    \dimen@ \getslant\the\textfont\symletters \ht\usefulbox
    \divide\dimen@ \tw@ 
    \kern\dimen@ 
    \xtilde{\kern-\dimen@ \box\usefulbox\kern\dimen@ }\kern-\dimen@ }}
 {{\setbox\usefulbox=\hbox{$\m@th\textstyle #1$}%
    \dimen@ \getslant\the\textfont\symletters \ht\usefulbox
    \divide\dimen@ \tw@ 
    \kern\dimen@ 
    \xtilde{\kern-\dimen@ \box\usefulbox\kern\dimen@ }\kern-\dimen@ }}
 {{\setbox\usefulbox=\hbox{$\m@th\scriptstyle #1$}%
    \dimen@ \getslant\the\scriptfont\symletters \ht\usefulbox
    \divide\dimen@ \tw@ 
    \kern\dimen@ 
    \xtilde{\kern-\dimen@ \box\usefulbox\kern\dimen@ }\kern-\dimen@ }}
 {{\setbox\usefulbox=\hbox{$\m@th\scriptscriptstyle #1$}%
    \dimen@ \getslant\the\scriptscriptfont\symletters \ht\usefulbox
    \divide\dimen@ \tw@ 
    \kern\dimen@ 
    \xtilde{\kern-\dimen@ \box\usefulbox\kern\dimen@ }\kern-\dimen@ }}%
 {}}

\newcommand*\xoverline[2][0.75]{%
    \sbox{\myboxA}{$\m@th#2$}%
    \setbox\myboxB\null
    \ht\myboxB=\ht\myboxA%
    \dp\myboxB=\dp\myboxA%
    \wd\myboxB=#1\wd\myboxA
    \sbox\myboxB{$\m@th\overline{\copy\myboxB}$}
    \setlength\mylenA{\the\wd\myboxA}
    \addtolength\mylenA{-\the\wd\myboxB}%
    \ifdim\wd\myboxB<\wd\myboxA%
       \rlap{\hskip 0.5\mylenA\usebox\myboxB}{\usebox\myboxA}%
    \else
        \hskip -0.5\mylenA\rlap{\usebox\myboxA}{\hskip 0.5\mylenA\usebox\myboxB}%
    \fi}

\def\xxoverline #1{\mathchoice
 {{\setbox\usefulbox=\hbox{$\m@th\displaystyle #1$}%
    \dimen@ \getslant\the\textfont\symletters \ht\usefulbox
    \divide\dimen@ \tw@ 
    \kern\dimen@ 
    \overline{\kern-\dimen@ \box\usefulbox\kern\dimen@ }\kern-\dimen@ }}
 {{\setbox\usefulbox=\hbox{$\m@th\textstyle #1$}%
    \dimen@ \getslant\the\textfont\symletters \ht\usefulbox
    \divide\dimen@ \tw@ 
    \kern\dimen@ 
    \xoverline{\kern-\dimen@ \box\usefulbox\kern\dimen@ }\kern-\dimen@ }}
 {{\setbox\usefulbox=\hbox{$\m@th\scriptstyle #1$}%
    \dimen@ \getslant\the\scriptfont\symletters \ht\usefulbox
    \divide\dimen@ \tw@ 
    \kern\dimen@ 
    \xoverline{\kern-\dimen@ \box\usefulbox\kern\dimen@ }\kern-\dimen@ }}
 {{\setbox\usefulbox=\hbox{$\m@th\scriptscriptstyle #1$}%
    \dimen@ \getslant\the\scriptscriptfont\symletters \ht\usefulbox
    \divide\dimen@ \tw@ 
    \kern\dimen@ 
    \xoverline{\kern-\dimen@ \box\usefulbox\kern\dimen@ }\kern-\dimen@ }}%
 {}}
\makeatother

\theoremstyle{definition}

\title{$0$-cycles and sheaves on abelian surfaces}
\author{Giovanni Mongardi}
\address{Giovanni Mongardi\\Alma Mater Studiorum, Università di Bologna, P.zza di porta san Donato 5, 40126 Bologna, Italia}
\email{giovanni.mongardi2@unibo.it}

\author{Gianluca Pacienza}
\address{Gianluca Pacienza\\ 
Universit\'e de Lorraine, CNRS, IECL\\
F-54000 Nancy -- France }
\email{gianluca.pacienza@univ-lorraine.fr}
\author{Laura Pertusi}
\address{Laura Pertusi\\
Università degli studi di Milano, via Cesare Saldini 50, 20133 Milano, Italia}
\email{laura.pertusi@unimi.it}

\begin{document}

\subjclass[2020]{14J42, 14K12  	14C25, 14F06}
\keywords{}

\begin{abstract}
We introduce a filtration on the Chow ring of an abelian surface $A$, inspired by O'Grady's filtration on K3 surfaces. We give a geometric description of the filtration, and we prove that it is deeply linked with the rational orbit of points in the generalized Kummer variety of $A$. We propose a conjecture on the second Chern class of sheaves on this abelian surface, and we provide some evidence for the conjecture and prove some of its applications. 
\end{abstract}
\maketitle

\section{Introduction}
On a projective K3 surface $S$, Beauville and Voisin \cite{BV04} proved that there is a canonical zero cycle $o_S$, given by the class of any point on any rational curve on $S$, such that for every divisors $D,D'$ one has $D\cdot D'\in\Z o_S$. 
One can define a filtration in the Chow ring of zero cycles of $S$, where $S_g(S)$ is given by cycles equivalent to $x_1+\dots x_g+ko_S$, with $k\in \mathbb Z$. O'Grady \cite{OG13} proved that $S_g(S)$ consists of cycles supported on genus $g$ curves on $S$.  Building upon O'Grady's work and generalizing a result by Huybrechts \cite{Huy10}, Voisin \cite{Voi15} proved that the second Chern class of simple vector bundles falls into $S_d(S)$, where $d$ is half the dimension of the moduli space containing the vector bundle. By considering stable objects in $S$, Shen Yin and Zhao \cite{SYZ20} greatly generalized Voisin's result and proved that the second Chern class of a stable object lies in the prescribed piece of the above filtration.

The goal of our work is to introduce a filtration on the Chow group $\textrm{CH}_0(A)$ of $0$-cycles on an abelian surface $A$. To avoid ambiguities we denote by $a+_Ab$ the  sum, with respect to the group structure, of two points $a,b\in A$ on an abelian surface $A$  and by $a+b$ their sum in the Chow group $\textrm{CH}_0(A)$.  Inspired by the above works on projective $K3$ surfaces, for any positive integer $d$ we define the following subset of $\textrm{CH}_0(A)$ which will turn out to define an increasing filtration which is stable under multiplication by $\Z$ (cf.\ Lemma \ref{lem:filt} and Corollary \ref{cor:Z-inv}):
$$
S_d(A):=\{z\in \textrm{CH}_0(A):z+\iota(z)\equiv z'+2m o_A \textrm{ with } m\in \mathbb Z \textrm{ and $z'$ effective of degree $2d$}\},
$$
where $o_A\in A$ is the origin, $\iota$ the $(-1)$-involution and $\equiv$ the rational equivalence relation. 
For any integer $k\geq d$ we define inside $S_d(A)$ the following
$$
S^k_d(A):=\{z\in \textrm{CH}_0(A):z+\iota(z)\equiv z'+2(k-d) o_A \textrm{ with } m\in \mathbb Z \textrm{ and $z'$ effective of degree $2d$}\}.
$$
The filtration above can again be interpreted as cycles supported on curves (cf.\ Lemma \ref{lem:char-genus}).

Let $z\in A^{(d)}$. We denote by $O^\iota_z$ the set of all points $w\in A^{(d)}$ such that $$z+\iota(z)\equiv w+\iota(w)\in \textrm{CH}_0(A)$$ and by $O_z$ the set of all points $w\in A^{(d)}$ such that $z\equiv w\in \textrm{CH}_0(A)$.

\begin{theorem}\label{thm:orbits}
Let $A$ be an abelian surface and $k>d\geq 0$ two integers. Then
$$
S^k_d(A)=\{ z\in {\emph{CH}_0} (A): O^\iota_{z}\not=\emptyset \textrm{ and } \dim O^\iota_{z}\geq (k-d)\}.
$$
\end{theorem}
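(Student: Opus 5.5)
The idea is to move the whole problem to the Kummer K3 surface and then use the known description of orbits of $0$-cycles on K3 surfaces. Here $z$ is a cycle of degree $k$ and $O^\iota_z\subset A^{(k)}$.

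\textbf{Setup.} Let $\pi\colon A\to A/\iota$ be the quotient, $\tau\colon \mathrm{Km}(A)\to A/\iota$ the minimal resolution, and $p\colon \widetilde A\to \mathrm{Km}(A)$ the induced double cover from the blow-up $\widetilde A$ of $A$ at the $16$ two-torsion points. For any $0$-cycle $z$ on $A$, $z+\iota(z)=\pi^*\pi_*z$. The map $\pi_*$ identifies $\mathrm{CH}_0(A)^{\iota}$ (up to $2$-torsion issues) with $\mathrm{CH}_0(A/\iota)\cong \mathrm{CH}_0(\mathrm{Km}(A))$.

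\textbf{Identifying the canonical class.} The images of two-torsion points are the nodes, whose exceptional curves are smooth rational curves on $\mathrm{Km}(A)$. Hence the class of any point on them is the Beauville--Voisin class $o_K$. This gives $\pi^*o_K = 2o_A$, i.e. $\pi_*o_A=o_K$. I would check carefully here that $\pi^*$ is injective on $\mathrm{CH}_0(\mathrm{Km}(A))$. For this I would use $\pi_*\pi^*=2$ together with Roitman's theorem: $\mathrm{CH}_0$ of a K3 surface is torsion free.

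\textbf{Translating both sides.} With these identifications, $z\in S^k_d(A)$ means $\pi^*\pi_*z\equiv z'+2(k-d)o_A$ with $z'$ effective of degree $2d$. Pushing forward, this becomes $2\pi_*z\equiv \pi_*z'+2(k-d)o_K$. The main obstacle is to extract an effective $y'$ of degree $d$ with $\pi_*z\equiv y'+(k-d)o_K$. Here $z'$ is rationally equivalent to an $\iota$-invariant cycle, and I would try to show that it can be chosen of the form $w_0+\iota(w_0)$. The tool would be the curve characterisation of $S_d(A)$ (Lemma \ref{lem:char-genus}): that cycle is supported on a curve, which we can take $\iota$-symmetric, and on the image curve in $\mathrm{Km}(A)$ we can then divide by $2$ using torsion-freeness. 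Granting this, $z\in S^k_d(A)$ if and only if $\pi_*z\in S^k_d(\mathrm{Km}(A))$ in O'Grady's sense.

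\textbf{Orbits.} By Voisin's theorem on orbits for K3 surfaces, a degree $k$ cycle $y$ on a K3 surface satisfies $y\equiv y'+(k-d)o_K$ with $y'$ effective of degree $d$ if and only if its orbit $O_y\subset \mathrm{Km}(A)^{(k)}$ is nonempty of dimension $\ge k-d$. So it remains to compare dimensions. The map $\pi^{(k)}\colon A^{(k)}\to (A/\iota)^{(k)}$ is finite, and $(A/\iota)^{(k)}$ is birational to $\mathrm{Km}(A)^{(k)}$, with the two agreeing away from the nodes. Moreover $O^\iota_z=(\pi^{(k)})^{-1}(O_{\pi_*z})$, using injectivity of $\pi^*$ and the fact that $w+\iota(w)=\pi^*\pi_*w$. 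Finiteness then gives $\dim O^\iota_z=\dim O_{\pi_*z}$. One must also handle components of the K3 orbit meeting the exceptional curves. Points on an exceptional curve all have class $o_K$, so such components can be replaced by the node image without lowering dimension. The main obstacle is the effectivity/halving step; the rest is bookkeeping.
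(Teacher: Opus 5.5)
Your reduction to $\mathrm{Km}(A)$ is sound in outline. However, the step you single out as the main obstacle cannot be carried out as you propose, because the statement you try to prove there is false.

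Suppose $y$ in Definition~\ref{def:filt} is only required to be effective of degree $2d$ (its class is then automatically $\iota$-invariant). Then for $d\ge 2$ it need not be rationally equivalent to any $w_0+\iota(w_0)$ with $w_0$ effective of degree $d$. Here is an example.
\begin{itemize}
\item Take $b_1,b_2,b_3\in A$ general, put $b_4:=-(b_1+_Ab_2+_Ab_3)$ and $y:=b_1+b_2+b_3+b_4$.
\item Then $y-4o_A$ lies in the Albanese kernel, which is uniquely divisible and on which $\iota$ acts trivially. So $z:=k\,o_A+\tfrac12(y-4o_A)$ satisfies $z+\iota(z)\equiv y+2(k-2)o_A$.
\item The map $a\mapsto \pi_*(a-o_A)$ is a quadratic function of $a$: use Beauville's decomposition, and note that $\pi_*$ kills the anti-invariant part. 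Hence $\pi_*y\equiv x_{12}+x_{13}+x_{23}+o_K$, where $x_{ij}$ is the image of $b_i+_Ab_j$.
\item Suppose $\pi_*z\equiv e_1+e_2+(k-2)o_K$ with $e_1,e_2$ points. Then $\pi_*y\equiv 2e_1+2e_2\equiv e_1'+e_2'+2o_K$, using a curve of geometric genus $\le 1$ through each $e_i$.
\item So $x_{12}+x_{13}+x_{23}$ would have a positive-dimensional orbit. This is impossible: it is a general point of $\mathrm{Km}(A)^{(3)}$, because $(b_i)\mapsto(b_i+_Ab_j)$ is an isogeny of $A^3$, and general points have countable orbits.
\end{itemize}
Hence $\pi_*z\notin S^k_2(\mathrm{Km}(A))$, and by your own orbit comparison $\dim O^\iota_z<k-2$. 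Yet $z$ satisfies the literal definition of $S^k_2(A)$.

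Your proposed tool does not help either. The paper proves the inclusion $\subset$ of Lemma~\ref{lem:char-genus} by writing $z+\iota(z)=\sum_i(p_i+\iota(p_i))+2m\,o_A$ at the outset, so invoking it is circular. Also, halving via torsion-freeness of $\mathrm{CH}_0(\mathrm{Km}(A))$ produces a class, never effectivity; and $\mathrm{CH}_0$ of a curve is not torsion free anyway.

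The way out is definitional: $y$ must be symmetric as a cycle, i.e.\ $y=\eta+\iota(\eta)$ with $\eta$ effective of degree $d$. This is how the paper actually uses $S^k_d(A)$: see the remark after Definition~\ref{def:filt}, the proofs of Lemmas~\ref{lem:inc} and~\ref{lem:char-genus}, and the output $y=w+\iota(w)$ of Theorem~\ref{thm:cni}. With this reading, $\pi_*y=2\pi_*\eta$, and torsion-freeness immediately gives $\pi_*z\equiv\pi_*\eta+(k-d)o_K$. Conversely, an effective cycle on $\mathrm{Km}(A)$ lifts to an effective $\eta$ on $A$.

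With this fix, your argument is a valid proof by a genuinely different route. The paper argues on $A$ itself: it proves $\subset$ with symmetric hyperelliptic curves (Lemma~\ref{lem:inc}), and proves $\supset$ by rerunning Voisin's induction on $A^{(k)}$ with curves on which $p+\iota(p)\equiv 2o_A$ (Theorem~\ref{thm:cni}). Your route uses $z+\iota(z)=\pi^*\pi_*z$, injectivity of $\pi^*$ from Roitman, and $O^\iota_z=(\pi^{(k)})^{-1}(O_{\pi_*z})$ to import Voisin's K3 theorem wholesale. It avoids the delicate case analysis and shows that $S^k_d(A)$ is exactly the preimage under $\pi_*$ of O'Grady's $S^k_d(\mathrm{Km}(A))$.

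Two bookkeeping points remain.
\begin{itemize}
\item Define $\pi^*$ via the flat double cover $\widetilde A\to\mathrm{Km}(A)$, since $A\to A/\iota$ is not flat at the nodes.
\item Do not collapse orbit components that move along exceptional curves to the nodes, since that does lower dimension. Instead, move those points onto a non-exceptional rational curve, or prove $\subset$ directly on $A$ as in Lemma~\ref{lem:inc}.
\end{itemize}
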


We then make the following conjecture:
\begin{conjecture}\label{conj:abelianVoisin}
 {\it Let $A$ be an abelian surface and $F$ a simple vector bundle on $A$ with Mukai vector $v:=v(F)$. Suppose that $c_1(F)$ is symmetric. Then
$
c_2(F)\in S_{d(v)-1}(A).
$}
\end{conjecture}
Recall that given a Mukai vector $v=(r,l,s)$, we set $d(v):=\frac{l^2}{2}-rs+1.$ This number coincides with $\frac{1}{2}\dim(\Ext^1(F,F))$ for any stable sheaf $F$ with Mukai vector $v$, that is half the dimension of the moduli space $M_H(v)$ (for any $v$ generic polarization $H$).

We provide partial evidence for this conjecture in Theorem \ref{thm:simple filtration}. In Section~\ref{sec:simpletostablesheaves}, we prove that the above conjecture implies the same for all symmetric stable torsion free sheaves (cf.\ Theorem \ref{thm:abelianVoisin-slopestablesheaves}).

An important consequence of the above conjecture concerns the existence of the coisotropic subvarieties as predicted by Voisin (cf.\ \cite{V16}). More precisely, denote by $K_H(v)$ the fiber of the Albanese morphism of the moduli space $M_H(v)$ of semistable sheaves with respect to a $v$-generic polarization $H$ on $A$ and Mukai vector $v$.

    \begin{theorem}\label{thm:coiso}
Let $A$ be an abelian surface and $v$ a primitive positive Mukai vector with $v^2 \geq 6$. Let $H$ be a $v$-generic polarization on $A$. 
 Assume that Conjecture~\ref{conj:abelianVoisin} holds. Then for any $0<i\leq d(v)-1$ there exists an algebraically coisotropic subvariety $Z\dashrightarrow B$ of codimension $i-1$ in $K_H(v)$ with constant cycle fibers.
\end{theorem}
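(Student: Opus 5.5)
The plan is to follow the K3 strategy of Shen--Yin--Zhao and Voisin, transported to the Albanese fibre $K:=K_H(v)$. Set $n:=d(v)-2$, so that $\dim K=2n=v^2-2$. For $0<i\leq d(v)-1$ I want a subvariety $Z_i\subset K$ of codimension $i-1$ such that through a general point of $Z_i$ the rational-equivalence orbit in $K$ has dimension $\ge i-1$. Voisin's criterion \cite{V16} then applies: a subvariety of codimension $c$ in a projective hyperkähler manifold whose points have orbits of dimension $\ge c$ is algebraically coisotropic, and its isotropic fibration $Z\dashrightarrow B$ has as fibres the (constant cycle) orbits. So everything reduces to constructing $Z_i$ and bounding orbit dimensions from below.

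\emph{Step 1: from orbits in $K$ to cycles on $A$.} First I relate rational equivalence of points $[F],[G]\in K$ to $0$-cycles on $A$. After translating, every $F\in K$ has fixed symmetric determinant and fixed $\mathrm{alb}(c_2(F))$. I would use the abelian analogue of the Marian--Zhao / SYZ statement: $[F]\equiv[G]$ in $\mathrm{CH}_0(K)$ as soon as $c_2(F)+\iota c_2(F)\equiv c_2(G)+\iota c_2(G)$. This is proved via the Mukai-type correspondence given by the quasi-universal family together with the $(-1)$-involution. With it, the orbit of $[F]$ in $K$ contains the family of $G\in K$ whose $c_2$ lies in the $\iota$-orbit $O^\iota_{z'}$ attached to $c_2(F)$. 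Theorem~\ref{thm:orbits} then says that if $c_2(F)\in S^k_{j}(A)$, this orbit has dimension $\ge k-j$.

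\emph{Step 2: constructing $Z_i$.} I use elementary modifications. For a Mukai vector $w$ with $d(w)=d(v)-(i-1)$, Conjecture~\ref{conj:abelianVoisin} gives $c_2(E)\in S_{d(w)-1}(A)$ for simple (hence, via Theorem~\ref{thm:abelianVoisin-slopestablesheaves}, stable) $E$ with symmetric determinant. Take kernels $F=\ker(E\to\bigoplus_{\ell} k_{x_\ell})$ of surjections onto $i-1$ skyscrapers; for general choices these are $H$-stable of Mukai vector $v$. Then
$$c_2(F)=c_2(E)+\sum_\ell x_\ell,$$
which lies in $S^k_{d(v)-1-(i-1)}(A)$ with $k-(d(v)-i)\ge i-1$, once the $x_\ell$ are paired with their symmetric points. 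Let $Z_i$ be the closure in $K$ of the locus of such $F$ (imposing the Albanese condition). A parameter count gives
$$\dim Z_i=\dim K_H(w)+3(i-1)+(\text{quot-scheme fibres}) - (\text{Albanese correction}),$$
and this should equal $2n-(i-1)$.

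\emph{Main obstacle.} The delicate point is the dimension bookkeeping in Step 2: I must show that $Z_i$ has codimension exactly $i-1$, rather than merely $\le i-1$, and that the general point of $Z_i$ has orbit of dimension exactly $i-1$. I would first try Mumford's argument that orbits of dimension $\ge c$ can fill at most a codimension-$c$ locus. Upper semicontinuity of orbit dimension, combined with the lower bound from Step 1, then forces both equalities, so Voisin's criterion applies. The hypotheses $v^2\ge 6$ and primitivity ensure that $K$ is a smooth hyperkähler manifold of dimension $\ge 4$ and that a suitable $w$ with positive $d(w)$ exists for every $i$ in the range.
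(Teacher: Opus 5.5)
There is a genuine gap: you never obtain a valid lower bound on orbit dimensions \emph{inside} $K=K_H(v)$, and the construction meant to supply one fails.

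\textbf{Step 1.} Theorem~\ref{thm:orbits} bounds $\dim O^\iota_{c_2(F)}$. This is a subset of $A^{(k)}$ with $k=c_2^{\mathrm{top}}(F)$, and a priori it has nothing to do with $K$. Knowing that $[G]=[F]$ whenever $c_2(G)\in O^\iota_{c_2(F)}$ gives no dimension bound unless a positive-dimensional part of $O^\iota_{c_2(F)}$ is actually realized as $c_2$ of sheaves in $K$, with finite fibres.

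This is exactly what the paper constructs. It takes the correspondence $R_0\subset K\times \mathrm{Kum}_{d(v)-2}(A)$ given by $c_2(E)\equiv \xi+m\,o_A$, obtained from the $\iota$-symmetrized condition via Lemma~\ref{lemma:c2sum0} and \cite{Bl76}. This correspondence has four properties:
\begin{itemize}
\item it dominates $K$ by Theorem~\ref{thm:abelianVoisin-slopestablesheaves};
\item it dominates the Kummer by Conjecture~\ref{conj:abelianVoisin};
\item it is generically finite on both sides;
\item it has constant-cycle fibres by Proposition~\ref{prop:MZabelian}.
\end{itemize}
Lin's constant-cycle coisotropic subvarieties $V_{i,1}(C)$ of the Kummer are then transported to $K$ through $R_0$.

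Moreover, your count is off by one even granting realizability, because $O^\iota$ lets the Albanese sum move. For a symmetric hyperelliptic $C$ one has $C^{(j)}\subset O^\iota_{jo_A}$, of dimension $j$, while $\dim O_{jo_A}=j-1$ by Lemma~\ref{lem:rational orbit}; compare Proposition~\ref{prop:rational orbit filtration}.

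\textbf{Step 2.} This step does not repair the problem.
\begin{itemize}
\item Since $v\bigl(\ker(E\to\bigoplus_{\ell=1}^{i-1}k_{x_\ell})\bigr)=v(E)-(i-1)(0,0,1)$, one gets $d(v)=d(w)+r(i-1)$. So $d(w)=d(v)-(i-1)$ is only possible for $r=1$. For $r=0$, which positivity allows, the modification does not change $d$ at all.
\item Even for $r=1$, where $K$ is a generalized Kummer and no conjecture is needed, your locus fails. Let $Z_i$ be the locus of cycles $Z'+x_1+\dots+x_{i-1}$ with $x_\ell\in C_\ell$ and $x+\iota(x)\equiv 2o_A$ on $C_\ell$. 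It has codimension $i-1$ but is not coisotropic for $i\ge 3$.
\end{itemize}

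To see the last point, set $m=\deg Z'$ and let $\tau_\ell$ span $T_{x_\ell}C_\ell$. Inside $\ker\bigl(\Sigma\colon T_0A^{\oplus(i-1+m)}\to T_0A\bigr)$, the symplectic orthogonal of $T_\xi Z_i$ consists of the vectors $(v+\lambda_1\tau_1,\dots,v+\lambda_{i-1}\tau_{i-1};\,v,\dots,v)$ with $(i-1+m)v=-\sum_\ell\lambda_\ell\tau_\ell$. Such a vector is tangent to $Z_i$ only if $v\in\bigcap_\ell \mathbb{C}\tau_\ell$. For general $x_\ell$ (unless all $C_\ell$ are translates of one elliptic curve) this intersection is $0$, so $T^\perp\cap T$ has dimension $i-3<i-1$.

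Concretely, the sum condition defining $K$ freezes $\sum_\ell x_\ell$. Sliding the $x_\ell$ along the curves therefore only gives $(i-3)$-dimensional constant-cycle families. By Voisin's criterion, $Z_i$ cannot be swept out by $(i-1)$-dimensional constant-cycle subvarieties. The semicontinuity argument in your last paragraph then has no lower bound to work with.

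\textbf{Missing idea.} In the Kummer, the fix is Lin's construction: place $i$ of the points on a translate $C+_Aa$ with $a\in C$, so that the common translation absorbs the sum constraint. For general $v$ you then still need the correspondence $R_0$ above to move these subvarieties into $K$.
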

While this paper was being prepared, Chen, Li and Zhang posted the very interesting work \cite{CLZ26} on the arXiv.  Among other things they introduce a similar framework to  study zero cycles on generalized Kummer type manifolds. 
We compare \cite[Conjecture 7.2]{CLZ26} with Conjecture \ref{conj:abelianVoisin} at the end of Section~\ref{sec:coisotropicsub}. 

\subsection*{Acknowledgements}
We would like to thank Kieran O'Grady and Xiaolei Zhao for many useful comments on a preliminary version of this work and Zaiyuan Chen, Zhiyuan Li, and Ruxuan Zhang for kind email exchange.

G.P. is partially supported by ANR project No. ANR-23-CE40-0026 (POK0).
L.P.\ is supported by the FIS 3 Starting Grant Stability Conditions on Noncommutative Varieties with Applications in Algebraic Geometry (FIS-2024-04715), and is a member of the Indam group GNSAGA.

\section{Symmetric curves and cycles on abelian surfaces}
In this section we prove that there are curves in any abelian surface which are symmetric and cover a curve of given genus $g$ on the quotient K3 surface $S$.
In the following, a $\iota$-invariant curve/cycle/line bundle
will be called symmetric. We denote with $\Pic^\iota(A)$ the group of symmetric line bundles. Notice that, by \cite[Lemma 4.6.2]{BL13}, for any $l\in \textrm{NS}(A)$ there exists a symmetric line bundle $L$ such that $c_1^{top}(L)=l$. 
\begin{proposition}\label{prop:symmetric curves}
    Let $A$ be an abelian surface and let $S$ be the minimal resolution of the quotient $A/\iota$. Then, for any given genus $g$, there exists an integral curve $C$ of geometric genus $g$ inside $S$ whose double cover $\widetilde{C}\subset A$ is a symmetric curve.
\end{proposition}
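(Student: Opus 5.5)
The plan is to first produce the curve on the Kummer K3 surface $S$, and then check that its preimage in $A$ is automatically symmetric. The second part is formal. Let $\pi\colon \widetilde A\to A$ be the blow-up of the sixteen $2$-torsion points. Then $\iota$ lifts to $\widetilde A$, and $\widetilde A/\iota\cong S$ via a double cover $q\colon\widetilde A\to S$ branched exactly along the sixteen exceptional $(-2)$-curves $E_1,\dots,E_{16}$. Let $C\subset S$ be any integral curve different from the $E_i$. Then $q^{-1}(C)$ is a curve stable under the lifted involution. Its image $\widetilde C:=\pi(q^{-1}(C))\subset A$ is therefore $\iota$-invariant, i.e.\ symmetric, and $\widetilde C\to \widetilde C/\iota$ is birationally the double cover $q^{-1}(C)\to C$. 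The double cover may be irreducible or split into two components exchanged by $\iota$; in either case it is a symmetric curve covering $C$. So the statement reduces to the following claim: for every $g\geq 0$, the K3 surface $S$ contains an integral curve of geometric genus $g$ which is not one of the $E_i$.

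To prove the claim, I will use the projectivity of $S$, which holds because $A$ is projective, together with the theory of rational and nodal curves on projective K3 surfaces. For $g=0$, Bogomolov--Mumford (and its refinements, e.g.\ Chen--Gounelas--Liedtke) show that a projective K3 surface carries infinitely many rational curves. A more concrete route is to fix a symmetric ample line bundle $L$ on $A$, which exists by \cite[Lemma 4.6.2]{BL13}. The invariant sections of $L^{\otimes 2n}$ then descend to a big and nef class $H_n$ on $S$, and every member of $|H_n|$ contains a rational component distinct from the $E_i$, because the $E_i$ are orthogonal to $H_n$ while $H_n^2>0$. So rational curves other than the $E_i$ exist.

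For $g\geq 1$, I will run the standard degeneration-and-smoothing argument. I take two integral rational curves $R_1,R_2\subset S$, different from the $E_i$, chosen (possibly as members of multiples of $H_n$) so that they meet transversally in at least $g+1$ points. The curve $R_1\cup R_2$ has arithmetic genus $R_1\cdot R_2-1\geq g$. By the deformation theory of nodal curves on K3 surfaces (the Severi variety of curves of geometric genus $g$ in $|R_1+R_2|$ has expected dimension $g$ at such a point, and the nodes can be smoothed independently), I can smooth exactly $g+1$ of the intersection nodes while keeping the others. The result is an integral curve $C$ of geometric genus exactly $g$. Since $g\geq 1$, the curve $C$ is not one of the rational curves $E_i$.

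The main obstacle is this last step. One must guarantee that the chosen nodes can be smoothed independently and that the resulting curve is irreducible with geometric genus exactly $g$. This requires controlling the unobstructedness of the partial smoothing, for instance through $H^1$ of the twisted normal sheaf, which vanishes on a K3 because $K_S=0$. My first attempt is to invoke the existence of curves of every geometric genus on a projective K3 surface as a known result (Chen, Chen--Gounelas--Liedtke). I would fall back on the explicit $R_1\cup R_2$ smoothing only if a citation is not acceptable.
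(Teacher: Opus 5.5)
Your argument is correct and is the paper's proof: the paper also notes that the preimage in $A$ of a curve on $S$ is automatically symmetric, and then takes the integral curve of geometric genus $g$ on $S$ directly from \cite[Theorem A]{CGL}; excluding the sixteen exceptional curves and allowing the preimage to split into two $\iota$-exchanged components are useful precisions the paper leaves implicit. Rely on that citation rather than your fallback sketches, which are inaccurate as written: only \emph{some} member of $|H_n|$, not every member, is a sum of rational curves, and on a K3 surface the obstruction space for preserving a set of nodes is $H^1$ of the dualizing sheaf of the partial normalization, which is nonzero, so independent smoothing of nodes needs a dimension-count argument rather than a vanishing theorem.
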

\begin{proof}
    It suffices to prove the existence of the curve $C$ on $S$, as the double cover will naturally be symmetric. In this setting, our claim is precisely \cite[Theorem A]{CGL}.    
\end{proof}
\begin{remark}\label{rem:hyperelliptic}
    An interesting case is when  $g=0$. We will call symmetric hyperelliptic all curves on $A$ covering an integral rational curve on $S$. The name is motivated by the fact that the curve is $\iota$-invariant and the action of $\iota$ on it coincides with a hyperelliptic involution.
    In particular, by the above proposition there exist ample and irreducible symmetric hyperelliptic curves. Moreover, as \cite[Theorem A]{CGL} gives a Zariski dense set of rational curves in $S$, it proves also the Zariski density of symmetric hyperelliptic curves on $A$. Furthermore, notice that for any point $p$ on a symmetric hyperelliptic curve we have $p+\iota(p)\equiv 2o_A$: indeed by definition the class of any element of the $g^1_2$ given by the hyperelliptic involution $\iota$ is equivalent to the pullback of the class of the point $o_S$, which is equal to $2o_A$.
\end{remark}

The following proposition is contained in \cite{Be86}, we provide here a different proof relying on the previous result:
\begin{proposition}\label{prop:BV}
The image of the intersection map restricted to symmetric classes 
$$
\Pic(A)^\iota \otimes Pic(A)^\iota \to \emph{CH}_0(A),\ L\otimes M\mapsto L\cdot M.
$$
has image in $2\Z o_A$. 
\end{proposition}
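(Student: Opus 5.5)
The plan is to reduce the statement to intersecting a symmetric line bundle with a symmetric hyperelliptic curve, and then use Remark~\ref{rem:hyperelliptic}.

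\emph{Step 1: reduce to a generating set.} The intersection product is bilinear, so it suffices to treat generators of $\Pic(A)^\iota$. Fix a symmetric ample class. By Remark~\ref{rem:hyperelliptic}, symmetric hyperelliptic curves are Zariski dense and can be chosen ample and irreducible. I would show that every symmetric line bundle is a difference of symmetric line bundles of the form $\sO_A(C)$, with $C$ an integral symmetric hyperelliptic curve. Concretely, for $L\in\Pic(A)^\iota$ and $H=\sO_A(C_0)$ with $C_0$ symmetric hyperelliptic and ample, the bundle $L\otimes H^{n}$ is symmetric and very ample for $n\gg 0$, and one wants it to be represented by a symmetric hyperelliptic curve. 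This is the delicate point. \cite[Theorem A]{CGL} provides rational curves in $S$ in many classes, but not obviously in every prescribed class.

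\emph{Step 1, fallback.} If the class-by-class statement is not available, I would use a weaker fact. Since $\Pic(A)^\iota$ is a finite extension of $\mathrm{NS}(A)$ by $2$-torsion (symmetric translates differ by $A[2]$), it is enough to prove the claim for $L$ and $M$ ranging over a set of symmetric bundles whose classes span $\mathrm{NS}(A)\otimes\mathbb Q$, and then control the torsion and the finite index. Here I would use Roitman's theorem: $\mathrm{CH}_0(A)_{\hom}$ is divisible, and its torsion injects into $A$. Together with the fact that symmetric bundles with the same Néron--Severi class differ by an element of $A[2]$, this should reduce everything to the main computation below for a spanning family of integral symmetric hyperelliptic curves. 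Such a family comes from the density statement of \cite{CGL}. I expect this step to be the main obstacle.

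\emph{Step 2: the key computation.} Let $C$ be a symmetric hyperelliptic curve and $M$ a symmetric line bundle. Compute $C\cdot M$ as $j_*(M|_{\widetilde C})$, where $j\colon\widetilde C\to A$ is the normalization map. Since $M$ is symmetric and $\widetilde C$ is $\iota$-stable, the restriction $M|_{\widetilde C}$ is an $\iota$-invariant line bundle on a hyperelliptic curve. Every such bundle is, up to a multiple of the $g^1_2$, a sum of Weierstrass points. More precisely, $\iota$-invariant divisor classes on a hyperelliptic curve are generated by the $g^1_2$ and the ramification points. Their pushforwards are handled as follows.
\begin{itemize}
\item An element of the $g^1_2$ pushes forward to $p+\iota(p)\equiv 2o_A$, by Remark~\ref{rem:hyperelliptic}.
\item A ramification point $w$ of $\widetilde C\to C$ maps to a $2$-torsion point $\tau\in A[2]$.
\end{itemize}
So $C\cdot M\equiv 2m\,o_A+\sum_i \tau_i$ with $\tau_i\in A[2]$.

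\emph{Step 3: the Weierstrass contribution.} It remains to show that the $\sum_i\tau_i$ part lies in $2\Z o_A$. Here the standard identity $2w\equiv$ the $g^1_2$ on $\widetilde C$ gives $2\tau\equiv 2o_A$ after pushforward. This is not enough by itself, so I would argue via $\iota$-invariance of $M|_{\widetilde C}$ together with the degree parity $M\cdot C\in 2\Z$. This parity holds because the intersection form on $A$ is even. The aim is to show that the Weierstrass points occur in pairs whose pushforward is rationally equivalent to $2o_A$. For this I would use that $\tau+\tau'\equiv o_A+(\tau+_A\tau')$ in $\mathrm{CH}_0(A)$ modulo the Bloch--Beauville relation, together with the fact that $\sum_i\tau_i$ sums to $o_A$ in the group law. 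The latter holds because the Albanese image of $C\cdot M$ is $0$ for symmetric $C$ and $M$.
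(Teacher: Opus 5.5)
The gap is in Step 3, and it cannot be closed: both facts you use there are false, and the statement itself fails as written.

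Evenness of the intersection form on $A$ means $D^2\in 2\Z$, not $D\cdot D'\in 2\Z$. On $A=E_1\times E_2$, the curves $E_1\times\{0\}$ and $\{0\}\times E_2$ are symmetric hyperelliptic: $\iota$ acts on each as $-1$, with rational quotient. They meet transversally in the single point $o_A$. So here $M|_{\widetilde C}$ is a single Weierstrass point and $C\cdot M=o_A\notin 2\Z o_A$. With the ample symmetric classes $H=[E_1\times\{0\}]+[\{0\}\times E_2]$ and $H'=2[E_1\times\{0\}]+[\{0\}\times E_2]$ one even gets $\deg(H\cdot H')=3$.

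Likewise, symmetry of $L,M$ only gives $\iota_*(L\cdot M)=L\cdot M$, hence $2\,\mathrm{alb}(L\cdot M)=0$, not $\mathrm{alb}(L\cdot M)=0$. Concretely, let $A=J(X)$ for a genus $2$ curve $X$ with Weierstrass points $w_1,\dots,w_6$. Let $\Theta$ be the image of $x\mapsto \sO_X(x-w_6)$ and $\tau=\sO_X(w_1-w_2)\in A[2]$. Then $\Theta$ and $\Theta+\tau$ are ample symmetric hyperelliptic curves, and $\Theta\cap(\Theta+\tau)=\{\sO_X(w_1-w_6),\sO_X(w_2-w_6)\}$. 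These are two $\iota$-fixed points whose sum in $A$ is $\tau\neq 0$. So $\Theta\cdot(\Theta+\tau)$ has degree $2$ and nonzero Albanese image, hence is not $2o_A$. In your notation, $M|_{\widetilde C}=w_1+w_2$: two Weierstrass points that do not pair off.

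The paper's proof has the same hole. It chooses symmetric hyperelliptic $C\in|H|$, $C'\in|H'|$ and writes $C\cap C'=\sum p_i+\sum\iota(p_i)$. This silently discards the $\iota$-fixed intersection points, which are exactly the Weierstrass terms your Step 2 isolates. It also needs hyperelliptic members of prescribed linear systems, which Proposition~\ref{prop:symmetric curves} does not provide, as you suspected in Step 1.

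What survives is the statement up to $2$-torsion. For $\tau\in A[2]$ one has $2[\tau]\equiv 2o_A$, since the difference is torsion with trivial Albanese image and so vanishes by Roitman. Hence your Step 2 gives $2(C\cdot M)\in 2\Z o_A$. In general, \cite{Be86} gives $L\cdot M=\deg(L\cdot M)\,o_A$ in $\textrm{CH}_0(A)_{\mathbb Q}$, and combining this with Roitman's theorem yields $L\cdot M\equiv(\deg(L\cdot M)-1)\,o_A+[\sigma]$ with $\sigma=\mathrm{alb}(L\cdot M)\in A[2]$. So the correct integral statement is $2(L\cdot M)\in 2\Z o_A$. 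Since $L\cdot M$ is $\iota$-invariant, this is equivalent to $L\cdot M\in S_0(A)$. The term $[\sigma]-o_A$ cannot be removed, and your finite-index fallback in Step 1 could only ever reach the statement modulo torsion.
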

\begin{proof}
    As $\Pic(A)^\iota$ is generated by ample symmetric classes, it is sufficient to check that 
    \begin{equation}
        H\cdot H'\in \Z o_A
    \end{equation}
    for $H,H'$ ample symmetric line bundles. By Proposition \ref{prop:symmetric curves}, there exist $C\in |H|,\ C'\in |H'|$ symmetric hyperelliptic curves. In particular for any $p\in C\cap C'$ we have $\iota(p)\in C\cap C'$. Hence 
    $$
    C\cap C'=\sum_{i=1}^m p_i + \sum_{i=1}^m  \iota(p_i) 
    $$
    from which we deduce
    \begin{equation}
        H\cdot H'\equiv C\cdot C'\equiv 2m \cdot o_A
    \end{equation}
    by Remark \ref{rem:hyperelliptic}.
\end{proof}
 We now prove a result which will be useful further on. The analog for general $K3$ surfaces was proved in \cite[Theorem 1.2]{ML04} and its statement and proof inspired our result. 

\begin{theorem}\label{thm:maclean}
    Let $A$ be an abelian surface and $x\in A$ a general point. Then
    $$
    \{y\in A:y+\iota(y)\equiv x+\iota(x)\}
    $$
    is dense for the euclidean topology in $A$.
\end{theorem}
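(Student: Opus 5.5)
We follow Maclean's argument for K3 surfaces \cite[Theorem 1.2]{ML04}, replacing rational curves on the K3 surface by symmetric hyperelliptic curves on $A$ (Remark \ref{rem:hyperelliptic}) and rational equivalence of points by rational equivalence of $\iota$-orbits $y+\iota(y)$. The basic mechanism is as follows. Let $\widetilde C\subset A$ be a symmetric hyperelliptic curve, so that $p+\iota(p)\equiv 2o_A$ for every $p\in\widetilde C$. If $x$ does not lie on such a curve, we cannot use this directly. Instead we use translates and the group law: for $t\in A$, the curve $\widetilde C+_A t$ is invariant under the involution $\iota_t\colon y\mapsto 2t-_A y$, which acts on it as a hyperelliptic involution. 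Hence $y+\iota_t(y)$ is constant in $\textrm{CH}_0(A)$ along $\widetilde C+_At$.

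The key identity to establish first is a translation compatibility in $\textrm{CH}_0(A)$, coming from the Pontryagin/Bloch-type relations, namely Beauville's decomposition with $\textrm{CH}_0(A)=\bigoplus_s \textrm{CH}_0(A)_s$, where $\iota$ acts by $(-1)^s$. Then $y+\iota(y)$ only sees the even parts $\textrm{CH}_0(A)_0\oplus\textrm{CH}_0(A)_2$, and the condition $y+\iota(y)\equiv x+\iota(x)$ is equivalent to equality of the $\textrm{CH}_0(A)_2$-components. Writing $[y]=\exp(\log$-type$)$ expansion $[y]=o_A+\gamma_1(y)+\tfrac12\gamma_1(y)^{*2}$, where $*$ is the Pontryagin product, the condition becomes $\gamma_1(y)^{*2}=\gamma_1(x)^{*2}$ (one may work rationally, which suffices for density statements, or state the result for the integral condition afterwards).

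The plan is then the following.

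\textbf{Step 1.} Pick a symmetric hyperelliptic curve $\widetilde C$ through a general point, which is possible by Zariski density, and use its normalization $\widetilde C'\to\widetilde C$ with hyperelliptic involution $\iota$. Points $y,\iota(y)$ on it satisfy $y+\iota(y)\equiv 2o_A$, hence $\gamma_1(\iota y)=-\gamma_1(y)$, consistent with the above.

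\textbf{Step 2.} Produce a one-parameter family through $x$. For $p,q\in\widetilde C$, the identity $y+\iota(y)\equiv p+\iota(p)$ holds for all $p\in\widetilde C$, and the class $y+\iota(y)$ is constant along $\widetilde C$. Using translation by $\tfrac12$-points, and the fact that the curves $\widetilde C$ through $x$ are dense (via the Zariski density from \cite{CGL} together with the countably many families), we find infinitely many hyperelliptic curves $D$, each a translate $\widetilde C+_At$ with $2t$ torsion-related, whose points all have the same orbit class as $x$. Concretely, we would take $D$ to be a component of $[n]^{-1}(\widetilde C)$ through $x$, for a general $x$ lying on some $[n]^{-1}(\widetilde C)$. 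This requires a countable union argument: the general $x$ lies on $[n]^{-1}(\widetilde C)$ for some $n$ and some $\widetilde C$ in the dense family. Since $[n]_*$ acts on $\textrm{CH}_0(A)_2$ by $n^2$ and $[n]$ commutes with $\iota$, the set $\{y: y+\iota y\equiv x+\iota x\}$ contains $[n]^{-1}$ of the corresponding set for $[n]x$ intersected with the appropriate fibres, up to torsion ambiguity. This ambiguity must be controlled using Roitman's theorem, since $\textrm{CH}_0(A)_{\hom}$ is torsion free modulo the Albanese.

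\textbf{Step 3.} Obtain density. Iterating over the countably many $n$ and curves, the preimages $[n]^{-1}(\widetilde C)$ become euclidean-dense, since the preimages of a curve under multiplication by $n$ equidistribute as $n\to\infty$.

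The main obstacle is Step 2. A general point $x$ does not lie on any of the countably many symmetric hyperelliptic curves, so some genuinely two-dimensional mechanism is needed. Following Maclean, we would instead use that for fixed $x$, the set of $y$ in question is a countable union of subvarieties of positive dimension, obtained via the curves through $x$ constructed from Mumford-type arguments on $A^{(2)}$ with the $\iota$-orbit relation. We would then show that these curves move, which makes their union dense. The first attempt would be to apply Maclean's rational curve argument on $S$ to the images of $x$ and $y$, together with the fact that $\pi^*\colon \textrm{CH}_0(S)\to \textrm{CH}_0(A)$ sends $[\bar y]$ to $y+\iota(y)$. Then Theorem \ref{thm:maclean} follows directly from \cite[Theorem 1.2]{ML04} applied to $S$, provided $S$ is general enough, or from its proof otherwise. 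Indeed, $\{\bar y\in S:\bar y\equiv\bar x\}$ is dense, and the preimage under $\pi$ of a dense set is dense. Verifying that Maclean's proof works for Kummer surfaces (which are not general) using \cite{CGL} is the real work.
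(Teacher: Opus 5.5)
\textbf{Verdict: there is a genuine gap.} Steps 1--3 fail, and the fallback route in your last paragraph is left unexecuted.

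\textbf{Steps 1--3.} Step 1 (a symmetric hyperelliptic curve ``through a general point'') and Step 2 (``the general $x$ lies on $[n]^{-1}(\widetilde C)$ for some $n$ and some $\widetilde C$'') both assume that a general point lies on one of countably many curves. That is false, as you concede in your last paragraph. More seriously, these curves see the wrong orbit. If $[n]y\in\widetilde C$ with $\widetilde C$ symmetric hyperelliptic, then by your own formula $n^2\gamma_1(y)^{*2}=\gamma_1([n]y)^{*2}=0$. Hence $y+\iota(y)\equiv 2o_A$ in $\textrm{CH}_0(A)_{\mathbb Q}$, and also in $\textrm{CH}_0(A)$ by Roitman's theorem, since the difference has trivial Albanese image. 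So the union of the curves $[n]^{-1}(\widetilde C)$, and the equidistribution in Step 3, only show that $\{y:y+\iota(y)\equiv 2o_A\}$ is dense. But $x+\iota(x)\not\equiv 2o_A$ for general $x$. The translated curves $\widetilde C+_At$ do not help either: they are adapted to the involution $\iota_t$, not to $\iota$.

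\textbf{The fallback.} Your last paragraph does name the paper's route: Maclean's theorem on the Kummer surface $S$, pulled back via $\pi^*[\bar y]=y+\iota(y)$. But it leaves undone exactly the step that needs an argument, and it misdescribes that step.
\begin{itemize}
\item Maclean's mechanism uses two distinct one-parameter families of irreducible elliptic curves, not rational curves. Roughly: on such a curve $E$, points differing by torsion in $\operatorname{Pic}^0$ of its normalization become rationally equivalent on $S$ by Roitman, giving a dense subset of $E$; a second family then sweeps these out to a dense subset of $S$.
\item $S$ is never ``general enough'' for \cite[Theorem 1.2]{ML04} as stated, since its Picard rank is at least $17$.
\end{itemize}
The paper supplies the missing input. \cite[Theorem A]{CGL} gives infinitely many integral elliptic curves on $S$, and \cite[Proposition 2.9]{CGL} shows they move in one-dimensional families, so Maclean's proof runs on $S$.

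\textbf{A simpler proof you already had.} Your second paragraph contains a complete and far simpler argument that you overlooked.
\begin{itemize}
\item $\gamma_1$ is additive, because $[a]*[b]=[a+_Ab]$, so it kills torsion points.
\item Hence for every $t\in A_{\mathrm{tors}}$, the cycle $(x+_At)+\iota(x+_At)-x-\iota(x)$ is zero in $\textrm{CH}_0(A)_{\mathbb Q}$.
\item That cycle is torsion with trivial Albanese image, so it is zero in $\textrm{CH}_0(A)$ by Roitman.
\end{itemize}
Equivalently, your Step 2 observation that the set for $x$ contains $[n]^{-1}$ of the set for $[n]x$, applied to the point $[n]x$ itself, already yields $x+_AA[n]$. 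Torsion points are dense in $A$ for the euclidean topology, so this proves the statement for every $x$, with no need for \cite{CGL} or \cite{ML04}.
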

\begin{proof}
    We will be using \cite[Theorem 1.2 and its proof]{ML04}: the key point in Maclean's proof  is the existence of two distinct $1$-parameter families of irreducible elliptic curves on a general $K3$ surface. 
    We use \cite[Theorem A]{CGL}, which gives us the existence of (infinitely many) integral elliptic curves on $S$, which move in a one dimensional family by \cite[Proposition 2.9]{CGL}. 
    Hence Maclean's result \cite[Theorem 1.2]{ML04} holds on $S$, so that for any $p\in S$ general the locus $\{q\in S\,:\,q\equiv p\}$ is dense for the euclidean topology on $S$. It suffices now to take any preimage $y\in A$ of $p$ and by pullback we have
    $$\{\iota^*(p)\equiv \iota^*(q) \}=\{y\in A:y+\iota(y)\equiv x+\iota(x)\} $$
    is dense for the euclidean topology on $A$.
    
\end{proof}
\begin{remark}
    We remark that, using \cite[Theorem A and Proposition 2.9]{CGL} one can actually prove \cite[Theorem 1.2]{ML04} for any complex K3 surface $S$, so that for general $x\in S$
    the locus 
    $$ \{y\in S \text{ such that }y\equiv x\}$$
    is dense in the Euclidean topology.
\end{remark}

\section{The filtration on $\textrm{CH}_0$}
Following \cite{OG13} and \cite{Voi15}, we define the following filtration on the Chow ring of an abelian surface: 
\begin{definition}\label{def:filt}
Let $A$ be an abelian surface and $d\geq 0$ an integer. We set
$$
S_d(A):=\{z\in \textrm{CH}_0(A):z+\iota(z)\equiv y+2m\cdot o_A,\ m\in \Z,\ y \textrm{ effective $0$-cycle of degree } 2d \}.
$$
For any $k\geq d$ we will consider $S_d^k(A)\subset S_d(A)$ defined as follows
$$
S_d^k(A):=\{z\in \textrm{CH}_0(A):z+\iota(z)\equiv y+2(k-d)\cdot o_A,\ \ y \textrm{ effective $0$-cycle of degree } 2d \}.
$$
\end{definition}
Notice that, by construction, the cycle $y$ in the above definition is symmetric.

\begin{remark} \label{rem:additivity_filtration}
By definition we have that if $z \in S_d(A)$ and $z' \in S_{d'}(A)$, then $z+z' \in S_{d+d'}(A)$.    
\end{remark}

\begin{lemma}\label{lem:filt}
    $S_d(A)$ defines an increasing filtration $S_\bullet(A)$ on $\emph{CH}_0(A)$. 
\end{lemma}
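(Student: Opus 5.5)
We need to show two things: $S_d(A)\subseteq S_{d+1}(A)$ for every $d\geq 0$, and $\bigcup_d S_d(A)=\textrm{CH}_0(A)$. Both reduce to rewriting $2o_A$ and $z+\iota(z)$ in the required normal form, using only the symmetric hyperelliptic curves of Remark~\ref{rem:hyperelliptic}.

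\emph{Monotonicity.} Let $z\in S_d(A)$, so that $z+\iota(z)\equiv y+2m\,o_A$ with $y$ effective of degree $2d$. Take any point $p$ on a symmetric hyperelliptic curve; such curves exist by Proposition~\ref{prop:symmetric curves} with $g=0$. Remark~\ref{rem:hyperelliptic} gives $p+\iota(p)\equiv 2o_A$. Hence
$$
z+\iota(z)\equiv \bigl(y+p+\iota(p)\bigr)+2(m-1)\,o_A .
$$
Here $y+p+\iota(p)$ is effective of degree $2d+2$, so $z\in S_{d+1}(A)$. The same computation shows $S^k_d(A)\subseteq S^k_{d+1}(A)$ whenever $k\geq d+1$, since $2(m-1)=2(k-d-1)$ when $m=k-d$.

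\emph{Exhaustiveness.} Let $z\in\textrm{CH}_0(A)$. Write $z=z^+-z^-$ with $z^\pm$ effective, and set $e=\deg z^-$. For each point $q$ in the support of $z^-$ we need an effective symmetric expression for $-(q+\iota(q))$ modulo multiples of $2o_A$. First, the point $o_A$ lies on some symmetric hyperelliptic curve: pick one and translate nothing, since symmetric curves through $2$-torsion points exist. Alternatively, $2o_A$ itself is effective, so $q+\iota(q)+w\equiv 2c\,o_A$ is enough if we can find such a $w$.

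To get $w$, I would use a symmetric hyperelliptic curve $C$ together with a symmetric curve $D$ through $q$. Proposition~\ref{prop:BV} gives $C\cdot D\equiv 2n\,o_A$. Since $D$ and $C$ are symmetric, $C\cap D$ consists of $\iota$-pairs, so $C\cdot D$ is effective and symmetric. A simpler route may suffice: take $D\in|H|$ symmetric passing through $q$ (and hence through $\iota(q)$). Then $D\cdot D'$, for another symmetric $D'$ through $q$ meeting $D$ properly, is effective, contains $q+\iota(q)$, and lies in $2\Z o_A$ by Proposition~\ref{prop:BV}. This yields $-(q+\iota(q))\equiv w-2n\,o_A$ with $w$ effective and symmetric.

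Summing these relations over the points of $z^-$, we get
$$
z+\iota(z)\equiv \bigl(z^++\iota(z^+)+\textstyle\sum w\bigr)+2M\,o_A ,
$$
where the bracketed cycle is effective. Its degree is even, because $\deg(z^++\iota(z^+))$ is even and each $w$ has even degree. So $z$ lies in $S_d(A)$ for $d$ equal to half that degree.

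\emph{Main obstacle.} The delicate step is producing symmetric curves $D,D'$ through a given $q$ that meet properly. I would take $D,D'$ in $|2H|$ with $H$ symmetric ample: the symmetric sections form a base-point-free linear subsystem of large dimension, and imposing passage through $q$ is one linear condition. Two general members of that subsystem then intersect in finitely many points.
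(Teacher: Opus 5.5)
Your monotonicity argument is exactly the paper's proof: take $p$ on a symmetric hyperelliptic curve, use $p+\iota(p)\equiv 2o_A$ from Remark~\ref{rem:hyperelliptic}, absorb $p+\iota(p)$ into $y$, and lower $m$ by one. Your refinement $S^k_d(A)\subseteq S^k_{d+1}(A)$ is a correct bonus. The paper proves nothing beyond this nestedness. Your exhaustiveness part is therefore an addition, not a different route to the lemma as the paper states and uses it.

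That addition works in its final form, but its justification needs care.
\begin{itemize}
\item The remark that $C\cap D$ ``consists of $\iota$-pairs'' fails at $2$-torsion points, which are fixed by $\iota$.
\item For the same reason, Proposition~\ref{prop:BV} cannot be applied to arbitrary pairs of symmetric classes. On $E_1\times E_2$, the symmetric curves $\{0\}\times E_2$ and $E_1\times\{0\}$ meet transversally in the single point $o_A$, and $o_A\notin 2\Z o_A$ since its degree is odd.
\end{itemize}

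Your final choice $D,D'\in|2H|$ with $H$ symmetric is nevertheless safe:
\begin{itemize}
\item $D\cdot D'\equiv 4c_1(H)^2$.
\item Rationally, $c_1(H)^2\in\mathbb{Q}o_A$ by Beauville's decomposition, so $w:=c_1(H)^2-(H^2)o_A$ is torsion.
\item Since $w$ is $\iota$-invariant, its Albanese image is $2$-torsion.
\item By Roitman's theorem, $2w=0$, hence $D\cdot D'\equiv 4(H^2)\,o_A\in 2\Z o_A$.
\end{itemize}

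A cleaner route to exhaustiveness avoids intersection products altogether. Any finite set of points lies on an irreducible symmetric curve. The Riemann--Roch argument in the proof of Lemma~\ref{lem:char-genus} then places every cycle supported on that curve in $S_g(A)$, where $g$ is the genus of the quotient curve.
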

\begin{proof}
Consider $z\in \textrm{CH}_0(A)$ such that $z+\iota(z)\equiv y+2m\cdot o_A$, $m\in \Z$, $y$ effective $0$-cycle of degree $2d$. Let $p=p_{d+1}$ be any point on a symmetric hyperelliptic curve on $A$, which exists by Proposition \ref{prop:symmetric curves}. Then $p+\iota(p)\equiv 2o_A$. Hence
$$
z+\iota(z)\equiv \big(y+p+\iota(p)\big) +2(m-1)\cdot o_A
$$ which shows the inclusion $S_d(A)\subset S_{d+1}(A),\ \forall d\geq 0$.
\end{proof}
We give a characterization of the filtration $S_\bullet(A)$ in terms of cycles supported on symmetric curves covering a curve of given genus. 

\begin{lemma}\label{lem:char-genus}
    For any $g\geq 0$ we have
    $$
S_g(A)=\bigcup\{f_*\emph{CH}_0 (\widetilde{C})\},$$
where $f\,:\widetilde{C}\to A$ is a non-constant map and $\widetilde{C}$ is a smooth curve such that $f(\widetilde{C})$ is symmetric and the induced involution $\iota$ on $\widetilde{C}$ is such that $C=\widetilde{C}/\iota$ has genus $g$.

\end{lemma}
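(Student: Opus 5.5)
We prove the two inclusions separately. Throughout, the model is O'Grady's characterization on K3 surfaces \cite{OG13}, transported to $A$ through the quotient map $\pi\colon A\dashrightarrow A/\iota$ and its minimal resolution $S$.

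\textbf{Inclusion $\supseteq$.} Let $f\colon\widetilde{C}\to A$ be as in the statement, with the involution $\iota$ on $\widetilde C$ compatible with $\iota$ on $A$, and quotient $q\colon\widetilde C\to C=\widetilde C/\iota$ of genus $g$. Take $z=f_*w$ with $w\in \textrm{CH}_0(\widetilde C)$. Since $f\circ\iota=\iota\circ f$, we have
$$z+\iota(z)=f_*(w+\iota_*w)=f_*q^*q_*w.$$
The cycle $q_*w$ is a $0$-cycle on a smooth genus $g$ curve. By Riemann--Roch, every divisor class of degree $e$ on $C$ can be written as $D-(e'-e)c_0$ with $D$ effective of degree $e'$, for any fixed point $c_0$. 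More precisely, any class can be written as $D+n\,c_0$ with $D$ effective of degree $g$ and $n\in\Z$, with $c_0$ chosen suitably. We want to choose $c_0$ so that $f_*q^*c_0\equiv 2o_A$. The natural attempt is to pick $c_0$ a branch point, or a point lying over a point of a rational curve. What we actually need is that $f_*q^*$ of some degree-one class on $C$ equals $2o_A$. My plan is to use Proposition~\ref{prop:BV}: the curve $f(\widetilde C)$ is symmetric, so intersecting it with a symmetric ample hyperelliptic curve $H$ (Remark~\ref{rem:hyperelliptic}) produces a symmetric cycle on $f(\widetilde C)$ whose class is a multiple of $o_A$. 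This gives a point class on $C$ mapping to a multiple of $2o_A$.

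\textbf{The main obstacle} is obtaining the multiple $2o_A$ and not just $2N o_A$ for a single point. To handle this, I would instead write $q_*w\equiv D+n\,\delta/N$, where $\delta$ is the divisor $q_*(f^*H)$ of degree $N$. By Riemann--Roch on $C$, any divisor class is $D+n\delta'-m\delta'$ with $D$ effective of degree at most $g$, after adding multiples of the degree-$N$ divisor $\delta$. The leftover degree discrepancy is absorbed by padding $D$ with points $p+\iota(p)\equiv 2o_A$ from hyperelliptic curves, as in Lemma~\ref{lem:filt}. Then $z+\iota(z)\equiv f_*q^*D+2m\,o_A$, where $f_*q^*D$ is effective of degree $\le 2g$. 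Padding up to degree exactly $2g$ as in Lemma~\ref{lem:filt} gives $z\in S_g(A)$.

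\textbf{Inclusion $\subseteq$.} Let $z+\iota(z)\equiv y+2m\,o_A$ with $y=\sum_{i=1}^g (x_i+\iota(x_i))$. Here $y$ is symmetric of degree $2g$; it is of this form after the note following Definition~\ref{def:filt}. The steps are:

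1. Choose, via \cite[Theorem A]{CGL} and a dimension count on $S$, an integral curve $C\subset S$ of geometric genus $g$ through the images of $x_1,\dots,x_g$. We must also arrange that $z$ is supported on it. For this, write $z=\sum n_j a_j$ and require that $C$ also contains the images of the $a_j$.
2. Let $C_0$ be a rational curve, which by Remark~\ref{rem:hyperelliptic} we may take symmetric hyperelliptic. Take $\widetilde C$ to be the normalization of the preimage of $C\cup C_0$, or rather a connected curve obtained by adding hyperelliptic components.
3. The main difficulty here is passing through many prescribed points while keeping genus $g$. I would avoid it by noting that the union $\bigcup f_*\textrm{CH}_0$ is a group once we allow disconnected $\widetilde C$, with genus added componentwise, in analogy with Remark~\ref{rem:additivity_filtration}. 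We then show that $z-\sum_i x_i$ lies in $f_*\textrm{CH}_0$ of hyperelliptic curves, and that $\sum_i x_i$ is carried by a genus-$g$ curve through the $x_i$.
4. For the first claim, use that $(z-\sum x_i)+\iota(\,\cdot\,)\equiv 2m\,o_A$, so this cycle is in $S_0(A)$. Then use density of hyperelliptic curves (Theorem~\ref{thm:maclean}) to represent elements of $S_0(A)$ on such curves.
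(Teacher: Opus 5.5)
\textbf{Inclusion $\supseteq$.} Your reduction $z+\iota(z)=f_*q^*q_*w$ is correct, and so is your goal of finding a point $c_0\in C$ with $f_*q^*c_0\equiv 2o_A$. Your ``natural attempt'' already achieves this goal, so the ``main obstacle'' does not exist. By Remark~\ref{rem:hyperelliptic}, \emph{each single} point $p$ of a symmetric hyperelliptic curve $H$ satisfies $p+\iota(p)\equiv 2o_A$. Take any $x\in f^{-1}(H)$, which is non-empty because $H$ is ample. Then $c_0:=q(x)$ satisfies $f_*q^*c_0=f(x)+\iota(f(x))\equiv 2o_A$. Writing $q_*w\equiv D+nc_0$ with $D$ effective of degree $g$ (Riemann--Roch) finishes the proof. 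This is precisely the paper's argument.

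The workaround you actually wrote down does not work:
\begin{itemize}
\item ``$\delta/N$'' has no meaning in $\textrm{CH}_0(C)$.
\item Subtracting multiples of the degree-$N$ divisor $\delta$ only adjusts the degree in steps of $N$.
\item Riemann--Roch gives effectivity in degree $\geq g$, not ``at most $g$''.
\end{itemize}
You would end with $D$ effective of some degree in $[g,g+N-1]$, i.e.\ only $z\in S_{g+N-1}(A)$. Padding as in Lemma~\ref{lem:filt} only raises the index, so it cannot bring you back to $S_g(A)$.

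\textbf{Inclusion $\subseteq$.} The paper takes, by a parameter count on $S$, a genus-$g$ curve through the images of $x_1,\dots,x_g$, and asserts directly that $z\in f_*\textrm{CH}_0(\widetilde C)$. You rightly sense that this only places $z+\iota(z)$ on the curve, but your repair has a genuine gap.

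Step 4 needs every element of $S_0(A)$ to be carried by symmetric hyperelliptic curves. That is exactly the case $g=0$ of the inclusion you are proving, so the argument is circular. Theorem~\ref{thm:maclean} cannot supply it: it is a density statement about the orbit of a general point and says nothing about a \emph{given} class. The issue is rational equivalence, not support. For instance, $a-\iota(a)\in S_0(A)$ for every $a\in A$. Yet a very general $a$ lies on no symmetric hyperelliptic curve, since $S$ has only countably many rational curves.

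Step 3 also changes the statement. The right-hand side is a union over single curves, and allowing disconnected $\widetilde C$ with genera added is a different claim.

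What is missing is the following fact: if $f(\widetilde C)$ generates $A$ and $u+\iota(u)\equiv 0$, then $u\in f_*\textrm{CH}_0(\widetilde C)$. A sketch:
\begin{itemize}
\item By Beauville's decomposition, $u$ lies rationally in the summand on which $\iota$ acts by $-1$, and the Albanese map identifies this summand with $A\otimes\mathbb Q$.
\item Realize the Albanese image of $u$ as that of $f_*(v-\iota^*v)$ for some $v\in\Pic^0(\widetilde C)$. This is possible since $J(\widetilde C)\to A$ is onto and $A$ is $2$-divisible.
\item Roitman's theorem then kills the remaining torsion difference.
\end{itemize}
Apply this to $u=z-\sum_i x_i-m\,h$ with $h\in f(\widetilde C)\cap H$. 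This puts $z$ on the single genus-$g$ curve through the $x_i$, and no hyperelliptic components are needed.
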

\begin{proof}
We prove the inclusion $\supset$.   
Let $\widetilde{C}$ be a smooth curve and $f:\widetilde{C}\to A$ a non-constant map such that $f(\widetilde{C})$ is symmetric. By Proposition \ref{prop:symmetric curves}, such a curve exists and the quotient curve $C:
=\widetilde{C}/\iota$ 
of genus $g$ has a non constant map to the K3 surface $S=\widetilde{A}/\iota$. Let $\pi\,:\widetilde{C}\to C$ be the quotient map. 
Let $z\in f_*\textrm{CH}_0 (\widetilde{C})$. If $g=0$ by definition 
$z+\iota(z)=2\deg(z)o_A$, as the hyperelliptic involution coincides with $\iota$. We may then assume $g\geq 1$. 
Let $D$ be an ample symmetric hyperelliptic curve as provided by Remark \ref{rem:hyperelliptic}. By the invariance of $\widetilde{C}$ and $D$, for any $q\in \widetilde{C}\cap D$ we have $\iota(q)\in \widetilde{C}\cap D$, hence $q+\iota(q)\equiv 2o_A$. Consider the following $0$-cycle of degree $g$:
$$
\xi := z-(\deg(z)-g)q
$$
and let $\eta\in \textrm{CH}_0(C)$ be a cycle such that $\xi+\iota(\xi)=\pi^*(\eta)$. Notice that $\eta$ still has degree $g$.
By Riemann-Roch on $C$ we have 
\begin{equation}\label{eq:eff}
 h^0(\widetilde{C},\sO_{\widetilde{C}}(\xi+ \iota (\xi)))\geq h^0(C,\mathcal{O}_C(\eta))= \deg(\eta)+1-g=1> 0.   
\end{equation}
So, up to linear equivalence, we can suppose $\eta$ is effective and so is $\xi+\iota(\xi)$.
Hence
$z+\iota(z)$ can be written as 
$$
z+\iota(z)\equiv \big(\xi+\iota(\xi)\big) + (\deg(z)-g)q + (\deg(z)-g)\iota(q)\equiv \big(\xi+\iota(\xi)\big) + 2  (\deg(z)-g)o_A
$$
with $\xi+\iota(\xi)$ a degree $2g$ cycle which is effective by (\ref{eq:eff}).

Let us now prove the other inclusion: let $z\in S_g(A)$ and write $$z+\iota(z)=p_1+\dots p_g+\iota(p_1)+\dots +\iota(p_g)+2m\cdot o_A.$$ Let $q_i$ be the projection of $p_i$ to the K3 surface $S$\footnote{Notice that such a projection is not well defined if one of the points is of two torsion. However, in the crepant resolution $S\to A/\iota$ all points over the image of a two torsion point are rationally equivalent, hence we can pick any of them.}. By a parameter count, there exists a curve $C$ of genus at most $g$ passing through the points $q_1,\dots q_g$, and let $f\,:\widetilde{C}\to \iota^{-1}(C)$ be a resolution of singularities. By construction $f(\widetilde{C})$ contains the points $p_1,\dots p_g$ and $\iota(p_1),\dots,\iota(p_g)$, so that $z\in f_*CH_0(\widetilde{C})$.
\end{proof}
\begin{corollary}\label{cor:Z-inv}
For all $d\geq 0$ we have $\mathbb Z \cdot S_d(A)\subset S_d(A)$. 
\end{corollary}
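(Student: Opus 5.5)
The natural route is through the curve characterization of Lemma~\ref{lem:char-genus}, which writes $S_d(A)$ as a union of the subgroups $f_*\textrm{CH}_0(\widetilde{C})$. Each $f_*\textrm{CH}_0(\widetilde{C})$ is the image of a group homomorphism $f_*\colon \textrm{CH}_0(\widetilde{C})\to \textrm{CH}_0(A)$, so it is a subgroup of $\textrm{CH}_0(A)$. In particular it is stable under multiplication by any integer $n\in\Z$. So if $z\in S_d(A)$, the lemma gives some admissible $f\colon\widetilde{C}\to A$ with $\widetilde{C}/\iota$ of genus $d$ and $z=f_*(w)$. Then $nz=f_*(nw)$ lies in the same subgroup, and applying Lemma~\ref{lem:char-genus} in the other direction shows $nz\in S_d(A)$.

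Because of this, the corollary is formally immediate once Lemma~\ref{lem:char-genus} is available. The only point I would check is that the inclusion $\supset$ of that lemma really applies to an arbitrary element of $f_*\textrm{CH}_0(\widetilde{C})$, of any degree and sign, and not only to effective cycles. This holds: in its proof, $z$ is an arbitrary class, and the degree-$g$ correction $\xi=z-(\deg(z)-g)q$ is made effective by Riemann--Roch on $C$. Nothing there depends on positivity of $z$. The case $n=0$ is trivial, since $0+\iota(0)=0$ is $y+2m\,o_A$ with $y=d(p+\iota(p))$ for $p$ on a symmetric hyperelliptic curve and $m=-d$. The case $n<0$ is covered by the subgroup argument as well.

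As a cross-check, or a fallback if one prefers not to rely on the curve description, I would also run a direct argument. Write $z+\iota(z)\equiv \sum_{i=1}^d (p_i+\iota(p_i))+2m\,o_A$. Then $nz+\iota(nz)\equiv n\sum_i(p_i+\iota(p_i))+2nm\,o_A$. To bring this back to $d$ symmetric pairs, I would choose a single curve $\widetilde{C}$ through all the $p_i$, with quotient $C$ of genus $d$ (again by the parameter count in Lemma~\ref{lem:char-genus}). I would then apply Riemann--Roch on $C$ to the degree-$d$ divisor $n\eta-(n-1)d\,\pi(q)$, where $q\in\widetilde{C}\cap D$ for an ample symmetric hyperelliptic $D$; this divisor has $h^0\geq1$. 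Note that $\pi^*\pi(q)=q+\iota(q)\equiv 2o_A$.

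The main, and only mild, obstacle in this direct route is the negative-$n$ case: $n\eta$ is then anti-effective, but Riemann--Roch still applies because the divisor has degree $d=g(C)$. Either way, I would present the short subgroup argument as the proof.
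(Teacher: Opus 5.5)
Your main argument is exactly the paper's: the corollary is deduced from Lemma~\ref{lem:char-genus} together with the fact that each $f_*\textrm{CH}_0(\widetilde{C})$ is a subgroup of $\textrm{CH}_0(A)$, hence stable under multiplication by $\Z$, and your check that the inclusion $\supset$ of that lemma applies to classes of arbitrary degree and sign is the right point to verify. Your direct fallback is a useful bonus, since it works only with $z+\iota(z)$. It therefore does not need the step in the proof of the inclusion $\subset$ of Lemma~\ref{lem:char-genus} that passes from $p_i\in f(\widetilde{C})$ to $z\in f_*\textrm{CH}_0(\widetilde{C})$, which is immediate for $z+\iota(z)$ but requires an extra argument for $z$ itself.
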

\begin{proof}
    This follows from the elementary observation that $\mathbb Z \cdot \textrm{CH}_0 (C)\subset \textrm{CH}_0 (C)$ for any curve $C$ and from the previous Lemma \ref{lem:char-genus}. 
\end{proof}
\section{A characterization of the filtration via rational orbits}
Let $z\in A^{(d)}$. We denote by $O^\iota_z$ the set of all points $w\in A^{(d)}$ such that $z+\iota(z)\equiv w+\iota(w)\in \textrm{CH}_0(A)$.
We start with the following.
\begin{lemma}\label{lem:inc}
Let $A$ be an abelian surface and $k>d\geq 0$ two integers. Then
$$
S^k_d(A)\subset \{ z\in {\emph{CH}_0} (A): O^\iota_{z}\not=\emptyset \textrm{ and } \dim O^\iota_{z}\geq (k-d)\}.
$$
\end{lemma}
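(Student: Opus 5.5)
Take $z\in S^k_d(A)$, so that $z+\iota(z)\equiv y+2(k-d)o_A$ with $y$ effective of degree $2d$. We must show that $O^\iota_z$, which lives in $A^{(k)}$, is nonempty and has dimension at least $k-d$. We read $z$ as a degree $k$ cycle, since $\deg(z+\iota(z))=2k$, and $O^\iota_z$ as the set of $w\in A^{(k)}$ with $w+\iota(w)\equiv z+\iota(z)$.

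The first step is to make $y$ itself symmetric of the form $y'+\iota(y')$ with $y'$ effective of degree $d$. By the remark after Definition \ref{def:filt}, $y$ is symmetric. If it is not literally of the form $y'+\iota(y')$, we use the proof of Lemma \ref{lem:char-genus}. There we represent the class through a symmetric curve $\widetilde C$ with quotient $C$ of genus $d$. By Riemann--Roch on $C$, any degree $d$ class $\eta$ is effective, so its pullback is $\pi^*\eta=y'+\iota(y')$ with $y'$ effective of degree $d$ on $\widetilde C$. This yields $z+\iota(z)\equiv y'+\iota(y')+2(k-d)o_A$.

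The second step constructs a $(k-d)$-dimensional family. Fix an ample irreducible symmetric hyperelliptic curve $D\subset A$ from Remark \ref{rem:hyperelliptic}. Every $p\in D$ satisfies $p+\iota(p)\equiv 2o_A$. Consider the map
$$
\phi\colon D^{(k-d)}\to A^{(k)},\qquad (p_1,\dots,p_{k-d})\mapsto y'+p_1+\dots+p_{k-d}.
$$
Each image point $w$ satisfies $w+\iota(w)\equiv y'+\iota(y')+\sum_i (p_i+\iota(p_i))\equiv y'+\iota(y')+2(k-d)o_A\equiv z+\iota(z)$. Hence $\phi(D^{(k-d)})\subset O^\iota_z$, and in particular $O^\iota_z\neq\emptyset$. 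The map $\phi$ is finite onto its image, because adding the fixed cycle $y'$ is injective on effective cycles. Its image is therefore a closed subvariety of dimension $k-d$, giving $\dim O^\iota_z\ge k-d$. Here $O^\iota_z$ is a countable union of closed subvarieties and its dimension is the maximum over these.

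The main obstacle is the first step: passing from an effective symmetric $y$ to a decomposition $y'+\iota(y')$ with $y'$ effective. An effective symmetric cycle might contain a $2$-torsion point with odd multiplicity. If Lemma \ref{lem:char-genus} does not directly give the decomposition, a fallback is available. We write $y=\sum(q_j+\iota(q_j))+\sum_t m_t\tau_t$ with $\tau_t$ $2$-torsion points. For any $2$-torsion $\tau$ we have $2\tau\equiv 2o_A$, via a symmetric hyperelliptic curve through $\tau$, or via the fact that all points over the image of $\tau$ in $S$ are rationally equivalent. This lets us replace pairs of torsion points. A single leftover $\tau$ can be written as $\tau+\iota(\tau)-\tau$, and we absorb the correction into the $o_A$ term by choosing $y'$ to contain $\tau$. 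The degree bookkeeping needs checking here. Once $y'$ is in hand, the rest is the direct construction above.
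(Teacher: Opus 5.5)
Your Step 2 is exactly the paper's argument: a symmetric hyperelliptic curve $D$ and the family $D^{(k-d)}$, translated by a fixed half of $y$. The paper's one-line proof silently assumes the output of your Step 1, and that step is where the gap lies. Your appeal to Lemma~\ref{lem:char-genus} is circular. Its inclusion $S_g(A)\subset\bigcup f_*\textrm{CH}_0(\widetilde C)$ starts by writing $z+\iota(z)=\sum_i(p_i+\iota(p_i))+2m\,o_A$, which is precisely the decomposition you want. The remark after Definition~\ref{def:filt} only gives $\iota(y)\equiv y$ as a class, which is much weaker than $y\equiv y'+\iota(y')$ with $y'$ effective.

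\textbf{Step 1 fails for $d\ge 2$ under the literal definition.}
\begin{itemize}
\item \emph{Every sum-zero $y$ occurs.} Let $\sigma$ be the summation map; then $\sigma(y)=\sigma(z+\iota(z))=0$. Conversely, take any $y\in A^{(2d)}$ with $\sigma(y)=0$. The class $y-2d\,o_A$ lies in the Albanese kernel $T(A)$, which is uniquely divisible (Roitman) and on which $\iota$ acts trivially (Beauville's decomposition). Hence $z:=k\,o_A+\tfrac12(y-2d\,o_A)$ lies in $S^k_d(A)$.
\item \emph{A very general such $y$ is not $\equiv y'+\iota(y')$.} Apply Mumford's theorem to a component of $\{(y,y'):\sigma(y)=0,\ y\equiv y'+\iota(y')\}\subset A^{(2d)}\times A^{(d)}$ that dominates $\{\sigma=0\}$. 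It forces the symmetrized $2$-forms pulled back from the two factors to agree up to a factor $2$. The first has rank $4d-2$, since $\omega$ is generically symplectic on the $(4d-2)$-dimensional locus $\{\sigma=0\}$. The second has rank at most $2d<4d-2$. So $y'$ effective of degree $d$ cannot exist.
\item \emph{The orbit is too small.} Run the same count on $\{(y,w):w+\iota(w)\equiv y+2(k-d)o_A\}$. Its image $W\subset A^{(k)}$ would have dimension $\ge k+3d-2$ but $\omega$-rank $4d-2$, which forces $\dim W\le k+2d-1$. Hence $\dim O^\iota_z<k-d$.
\end{itemize}
For $d\le 1$ there is no issue, since $\sigma(y)=0$ forces $y=p+\iota(p)$.

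So the lemma holds only if $S^k_d(A)$ is read as requiring $y=y'+\iota(y')$ with $y'\in A^{(d)}$. This is the form in which Lemma~\ref{lem:char-genus} uses it and Theorem~\ref{thm:cni} produces it. Under that reading your Step 2 is a complete proof.

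If one instead requires $y$ to be $\iota$-invariant as a cycle, your torsion fallback has the right idea, but the last move is wrong. A leftover $\tau$ cannot be absorbed into the $o_A$ term, since $\tau\not\equiv o_A$. The correct argument runs as follows.
\begin{itemize}
\item Since $\sigma(y)=0$, the $2$-torsion points occurring in $y$ with odd multiplicity sum to $0$ in $A$.
\item Each $[\tau]-[o_A]$ is $2$-torsion in $\textrm{CH}_0(A)$, because $2\tau\equiv 2o_A$.
\item $T(A)$ is torsion-free, so the sum of these points is $\equiv 2r\,o_A=r(o_A+\iota(o_A))$, which you can split between $y'$ and $\iota(y')$.
\end{itemize}
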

\begin{proof}
    Let $C\subset A$ be a symmetric hyperelliptic curve. For any $p\in C$, then $p+\iota(p)\equiv 2o_A$, and therefore for any $z\in C^{(k-d)}$, the cycle $z+\iota(z)$ is contained in the orbit of $2(k-d)o_A$. As a consequence, any $0$-cycle $z\in {\textrm{CH}_0} (A)$ such that $z+\iota(z)\equiv y+2(k-d)o_A$ with $y$ effective $0$-cycle of degree $k$ verifies $O^\iota_{z}\not=\emptyset \textrm{ and } \dim O^\iota_{z}\geq (k-d)$.
\end{proof}
We also record the two following results. 
\begin{lemma}\label{lem:2.2}
Let $A$ be an abelian surface
and $C\subset A$ a symmetric curve such that $\forall p,q\in C$ verify 
\begin{equation}\label{eq:equiv}
  p+  \iota(p)\equiv q+\iota(q)
\end{equation} in $A$. Then $\forall p\in C$ we have $p+\iota(p)\equiv 2o_A$.
\end{lemma}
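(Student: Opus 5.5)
Fix $p\in C$. The plan is to show that the common class $c:=p+\iota(p)\in \textrm{CH}_0(A)$ equals $2o_A$ by comparing it with a point where the answer is already known.

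The natural candidate is a point of $C$ meeting a symmetric hyperelliptic curve. By Remark~\ref{rem:hyperelliptic}, there exists an ample symmetric hyperelliptic curve $D\subset A$. Since $D$ is ample and $C$ is a curve, $C\cap D\neq\emptyset$. Pick $q\in C\cap D$. Because $q$ lies on $D$, Remark~\ref{rem:hyperelliptic} gives $q+\iota(q)\equiv 2o_A$. The hypothesis \eqref{eq:equiv} then yields, for every $p\in C$,
$$p+\iota(p)\equiv q+\iota(q)\equiv 2o_A,$$
which is the claim.

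The only point needing care is nonemptiness of $C\cap D$. This holds since $D$ is ample, so $D\cdot C>0$ for any curve $C$; no symmetry of $C$ is actually needed here. If one prefers not to use ampleness, one could instead use Zariski density of symmetric hyperelliptic curves (Remark~\ref{rem:hyperelliptic}) to find one meeting $C$. Some such curve must meet $C$: otherwise their union, which is dense, would be disjoint from $C$. More simply, any effective ample divisor meets every curve.

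Alternatively, one can argue intrinsically. If $C$ contains a $2$-torsion point $t$, then $t+\iota(t)=2t$, and one would still need $2t\equiv 2o_A$, which is less immediate. So the route through $D$ is cleaner, and I expect no genuine obstacle beyond invoking Remark~\ref{rem:hyperelliptic}.
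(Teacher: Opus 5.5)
Your proof is correct and is essentially the paper's own argument: take an ample symmetric hyperelliptic curve $D$, note that any $q\in C\cap D$ satisfies $q+\iota(q)\equiv 2o_A$ by Remark~\ref{rem:hyperelliptic}, and transfer this to every point of $C$ via \eqref{eq:equiv}. You are also right that the symmetry of $C$ plays no role in this step; the paper's remark that $\iota(q)\in C\cap D$ is superfluous. Your density-based fallback, however, would not work on its own: a Zariski-dense union of curves can miss a given curve (e.g.\ infinitely many translates $E\times\{t\}$ in $E\times E'$), so rely on ampleness of $D$, as you do.
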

\begin{proof}
    Take an ample symmetric hyperelliptic curve $D\subset A$ and consider $C\cap D$. For any $p\in C\cap D$ we have $\iota(p)\in C\cap D$, hence $p+\iota(p)\equiv 2o_A$ by Remark \ref{rem:hyperelliptic}, and the conclusion follows from (\ref{eq:equiv}). 
\end{proof}
\begin{lemma}\label{lem:2.3}
Let $A$ be an abelian surface. The union of curves $C$ satisfying the property stated in Lemma \ref{lem:2.2} is Zariski
dense in $A$.
\end{lemma}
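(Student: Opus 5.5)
The plan is to exhibit, inside $A$, a family of symmetric curves on which $p+\iota(p)$ has constant class, and to show that this family sweeps out a Zariski dense subset. The natural candidates are the symmetric hyperelliptic curves from Remark~\ref{rem:hyperelliptic}. Let $C\subset A$ be the preimage of an integral rational curve $R\subset S$. The plan has two steps: first check that such $C$ satisfies the hypothesis of Lemma~\ref{lem:2.2}, and then show that these curves cover a Zariski dense subset of $A$.

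For the first step, take $p\in C$. Then $p+\iota(p)$ is, up to the exceptional points over $2$-torsion points, the pullback along $A\dashrightarrow S$ of the point of $R$ under $p$. Since $R$ is rational, all its points are rationally equivalent in $S$; by Beauville--Voisin this common class is $o_S$. Pulling back along the correspondence $\Gamma\subset A\times S$ given by the closure of the graph of the quotient map therefore gives $p+\iota(p)\equiv q+\iota(q)$ for all $p,q\in C$. This is exactly \eqref{eq:equiv}.

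Two technical points need care here.

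\begin{itemize}
\item The rational map $A\dashrightarrow S$ is not defined at the $2$-torsion points. I would handle this by using the blow-up $\widetilde A$ of $A$ at the sixteen $2$-torsion points, which is a double cover of $S$, and pushing forward from $\widetilde A$ to $A$.
\item If $C$ is singular or reducible, the pulled-back cycles live on $C$ viewed as a subvariety of $A$. This is harmless, because the statement of Lemma~\ref{lem:2.2} only concerns points of $C$.
\end{itemize}

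For the second step, the key input is \cite[Theorem A]{CGL}, already quoted in Remark~\ref{rem:hyperelliptic}: the rational curves on $S$ form a Zariski dense union. Their preimages in $A$ are the symmetric hyperelliptic curves, and the preimage of a Zariski dense subset under the generically finite, surjective quotient map $\widetilde A\to S$ is Zariski dense. Composing with the birational morphism $\widetilde A\to A$ then gives Zariski density in $A$.

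The only point needing real care is the precise meaning of ``Zariski dense'' for a countable union of curves. It means that no proper closed subset contains all of them. This follows because a proper closed subset of $S$ contains only finitely many curves, whereas \cite[Theorem A]{CGL} provides infinitely many distinct rational curves whose union is dense. I do not expect a serious obstacle beyond correctly quoting \cite{CGL}.
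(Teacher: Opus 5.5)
Your proof is correct, but its key input differs from the paper's. The paper applies \cite[Lemma 2.3]{Voi15} to the Kummer K3 surface $S$. That lemma gives a Zariski dense family of constant cycle curves on $S$, i.e.\ curves all of whose points are $\equiv o_S$ but which need not be rational, and the paper pulls these back to $A$. You use only rational curves on $S$ and take density from \cite[Theorem A]{CGL}, so your argument is essentially a detailed version of Remark~\ref{rem:hyperelliptic}.

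\emph{What each route buys.} The paper needs only the density of constant cycle curves. This is logically weaker than, and was available before, the existence of infinitely many rational curves on every K3 surface. Your route uses that stronger input, which costs nothing here since the paper already relies on \cite{CGL} elsewhere. In exchange, your curves are precisely the symmetric hyperelliptic ones, on which $p+\iota(p)\equiv 2o_A$ holds directly. Your Step~1 also makes explicit the pull-back verification that the paper compresses into ``the pull-backs of these curves to $A$ yield the conclusion''. Working on the blow-up $\widetilde A$ handles a $2$-torsion point $p\in C$: its ramified fibre yields $2p=p+\iota(p)$.

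\emph{Two minor points.} First, exclude the sixteen exceptional $(-2)$-curves from your choice of $R$, since their preimages in $A$ are points; infinitely many curves remain. Second, for density it is enough to have infinitely many distinct curves, because a proper closed subset of a surface contains only finitely many. So you do not need the assertion that their union is dense.
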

\begin{proof}
    By \cite[Lemma 2.3]{Voi15} applied to the Kummer $K3$ surface $S$ associated to $A$, the  union of curves whose points are rationally
equivalent to the Beauville-Voisin cycle $c_S$ is dense. The pull-backs of these curves to $A$ yield the conclusion.
\end{proof}
We are now ready to prove the main result of this section. 
\begin{theorem}\label{thm:cni}
    Let $A$ be an abelian surface and $k\geq d\geq 0$ two integers. Let $Z\subset A^{(k)}$ an irreducible closed subvariety of dimension $k-d$ such that $\forall z,z'\in Z$ we have 
    \begin{equation}\label{eq:Z}
      z+\iota(z)\equiv z'+\iota(z')  
    \end{equation}
    Then any $z\in Z$ lies in $S^k_d(A)$. 
\end{theorem}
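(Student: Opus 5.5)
Our approach is to adapt Voisin's argument for K3 surfaces (the analogue of \cite[Theorem 1.4]{Voi15}), using the curves of Lemma~\ref{lem:2.2} and Lemma~\ref{lem:2.3} as substitutes for constant cycle curves. The aim is to show that $Z$ meets the locus
\[
W_{k-d}:=\{\,y+w \in A^{(k)} : y\in A^{(d)},\ w\in C_1+\dots \text{ (points on curves as in Lemma~\ref{lem:2.2})}\,\},
\]
where $k-d$ of the points are constrained to lie on curves $C$ with $p+\iota(p)\equiv 2o_A$. If $z_0\in Z\cap W_{k-d}$, say $z_0=y+p_{1}+\dots+p_{k-d}$ with each $p_i$ on such a curve, then Lemma~\ref{lem:2.2} gives $z_0+\iota(z_0)\equiv y+\iota(y)+2(k-d)o_A$. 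Here $y+\iota(y)$ is effective of degree $2d$. By \eqref{eq:Z}, every $z\in Z$ has the same $z+\iota(z)$, so $z\in S^k_d(A)$. Note that the definition of $S^k_d$ is phrased in terms of $z+\iota(z)$, so it is intrinsic to the orbit class, and it suffices to find a single such point $z_0\in Z$.

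To produce $z_0$, we fix an ample symmetric curve class and take a curve $D\subset A$ which is a union of (sufficiently many and sufficiently ample) curves of the type in Lemma~\ref{lem:2.2}. Lemma~\ref{lem:2.3} gives Zariski density of such curves. Alternatively, we can use ample symmetric hyperelliptic curves from Remark~\ref{rem:hyperelliptic}, which satisfy the hypothesis of Lemma~\ref{lem:2.2} and are ample and Zariski dense. We then consider the divisor $E_D\subset A^{(k)}$ of cycles having at least one point on $D$. Since $D$ is ample, $E_D$ is an ample divisor on $A^{(k)}$, being the descent of $\sum_i pr_i^*\sO(D)$. Hence, if $\dim Z\geq 1$, the intersection $Z\cap E_D$ is nonempty and of dimension at least $\dim Z-1$.

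We iterate this: $Z_1:=Z\cap E_{D_1}$ has a component of dimension $k-d-1$ whose general point has one point on $D_1$. Intersecting with $E_{D_2},\dots,E_{D_{k-d}}$ for further ample curves of the same type yields a point $z_0\in Z$ with $k-d$ of its points lying on curves satisfying Lemma~\ref{lem:2.2}.

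The main obstacle is ensuring that each successive intersection forces a \emph{new} point of the cycle onto a good curve. The danger is that the same point of $z$ satisfies all the conditions, or that components of $Z\cap E_{D}$ lie entirely inside the big diagonal or inside loci where a fixed point already lies on $D$. We would handle this as Voisin does. We use a Zariski dense countable family of curves $D$, choosing each $D_j$ general so that it does not contain the finitely many relevant positions of previously constrained points. We also work on the incidence variety $\widetilde Z\subset Z\times A^{k}$ of ordered cycles, imposing the condition $x_j\in D_j$ on the $j$-th coordinate. Each such condition is a pullback of an ample divisor on one factor. The projection of $\widetilde Z$ to the $j$-th factor is nonconstant on the relevant component, as long as the dimension is positive; otherwise that coordinate is constant and can be absorbed into $y$. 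If the projection of $\widetilde Z$ to every remaining factor is constant, then $\dim Z=0$, which contradicts having a positive dimension left.

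The case $k=d$ is trivial, with $y=z+\iota(z)$ for $z\in Z\subset A^{(k)}$ effective.
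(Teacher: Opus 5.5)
There is a genuine gap in how you produce $z_0$. You claim that as long as the current component $\widetilde Z_m$ is positive-dimensional, some not-yet-constrained coordinate is nonconstant on it, and that otherwise $\dim Z=0$. This is false. After imposing $x_{i_1}\in D_1,\dots,x_{i_m}\in D_m$, the component may lie (up to reordering) in $D_1\times\dots\times D_m\times\{w\}$, which has dimension up to $m$. So nothing is contradicted once $m\geq (k-d)/2$, and ``absorbing'' the frozen coordinates into $y$ makes $\deg y>2d$.

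More fundamentally, your existence argument for $z_0$ never uses \eqref{eq:Z}, and without \eqref{eq:Z} the claim $Z\cap W_{k-d}\neq\emptyset$ is false. Take $k=2$, $d=0$, and $Z=\{x+w_0 : x\in A\}\subset A^{(2)}$ with $w_0$ very general. Then $Z$ is irreducible of dimension $2$. However, the locus $\{p\in A : p+\iota(p)\equiv 2o_A\}$, which contains every curve as in Lemma~\ref{lem:2.2}, is a countable union of proper closed subsets. So $w_0$ lies on no such curve, no point of $Z$ has both of its points on good curves, and your procedure stalls at $D_1\times\{w_0\}$. The quantity you track, $\dim\widetilde Z_m$ inside $A^k$, is too weak. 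What must be preserved is that the projection of $\widetilde Z_m$ to the \emph{unconstrained} coordinates has dimension equal to (number of such coordinates) $-\,d$.

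This is exactly where the paper uses \eqref{eq:Z}, inside an induction on $k$. Suppose some projection, say $p_1\colon\widetilde Z\to A$, is dominant. The paper restricts to $\widetilde Z_C=p_1^{-1}(C)$ for a curve $C$ as in Lemma~\ref{lem:2.2}, and argues that the projection $q$ to the last $k-1$ factors still has image of dimension $k-d-1$. Otherwise $q$ has positive-dimensional fibres $F$ on $\widetilde Z$, along which only the first point moves. By \eqref{eq:Z} and Lemma~\ref{lem:2.2}, every point $c$ of $p_1(F)$ then satisfies $c+\iota(c)\equiv 2o_A$, and dominance of $p_1$ would force this at a general point of $A$, which is false. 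Since $c+\iota(c)\equiv 2o_A$ on $C$, the image $q(\widetilde Z_C)$ again satisfies the constancy hypothesis in $A^{(k-1)}$, and induction applies.

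If no projection is dominant, $\widetilde Z$ lies in a product of curves $C_1\times\dots\times C_k$, where $(p_i^*C)^2=0$. Expanding $L^{k-d}\cdot\widetilde Z>0$ for $L=p_1^*\sO_A(C)\otimes\dots\otimes p_k^*\sO_A(C)$ then forces a point with $k-d$ \emph{distinct} coordinates on $C$. That is the only regime in which your ``new coordinate at each step'' idea works without \eqref{eq:Z}.
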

\begin{proof}
We argue as in  \cite[Proof of Theorem 2.1]{Voi15} and we proceed by induction on $k$. The case $k=d$ is trivial, and the case $k=1,\ d=0$ is provided by Lemma \ref{lem:2.2}. 
   We consider the $k$-th cartesian product $A^{\times k}=A\times \cdots \times A$ and denote by $p:=p_1$, respectively by $q:=p_2\times \cdots \times p_k$, the projection onto the first factor, respectively onto the last $(k-1)$-factors. Consider the natural morphism $\pi:A^{\times k}\to A^{(k)}$ and let $\tilde Z:=\pi^{-1}(Z)$. By abuse of notation we still denote by $p$ and $q$ the restrictions of these maps to $\tilde Z$.  
   We will distinguish two cases. 

   {\bf Case 1:} the map $p:\tilde Z\to A$ is dominant.

\noindent
Let $C\subset A$ be a curve such that $\forall p\in C$ we have $p+\iota(p)\equiv 2o_A$. Set $\tilde Z_C:=p^{-1}(C)$. 
Notice that points of $\tilde Z_C$ are of the form
\begin{equation}\label{eq:c,z'}
    (c,z'): c\in C,\ c+z'\in Z.
\end{equation}
In particular $c+\iota(c)\equiv 2o_A$.

We further distinguish two subcases. 

 {\bf Subcase 1.a:} $\dim (q(\tilde Z_C))=\dim (\tilde Z_C)=  k-d-1$.

From (\ref{eq:c,z'}), (\ref{eq:Z}) and the fact that $c+\iota(c)\equiv 2o_A$, for all $c\in C$ we deduce that
$$
z'+\iota(z')\equiv z+\iota(z)-2o_A
$$
does not depend on the choice of $z'\in q(\tilde Z_C)$.
In particular we can apply the inductive hypothesis to $q(\tilde Z_C)$ and deduce that 
$
z'\in S^{k-1}_d(A),$ i.e.
$$
z'+\iota(z')\equiv y'+2(k-1-d)o_A \textrm{ with $y'$ effective :  } \deg(y')=2d.
$$
Therefore for all $z\in Z_C$, $z=c+z'$ we have
$$
z+\iota(z)=\big(c+\iota(c)\big)+\big(z'+ \iota(z')\big)\equiv 2o_A + \big(y'+2(k-1-d)o_A  \big) = y'+ 2(k-d)o_A 
$$
with $y'$ effective such that $\deg(y')=2d$.

  {\bf Subcase 1.b:} $\dim (q(\tilde Z_C))<\dim (\tilde Z_C)=  k-d-1$, for any curve $C$ as above.

 \noindent
By Lemma \ref{lem:2.3}, curves $C$ such that $p+\iota(p)\equiv 2o_A,\ \forall p\in C$ are Zariski dense in $A$, therefore we can take one such curve and a point $z\in\tilde{Z}_C$ such that $z'$ is general both in $\tilde{Z}_C$ and $\tilde{Z}$, and both varieties are smooth in $z'$ of dimensions respectively $k-d-1$ and $k-d$. By hypothesis, the map $q$ is not of maximal rank at $z'$, which is general in $\tilde{Z}$. Hence, also $q(\tilde{Z})$ has dimension at most $k-d-1$. As all cycles $z$ parametrized by $\tilde{Z}$ have $z+\iota(z)$ constant in $\text{CH}_0(A)$, it follows that for any fiber $F$ of $q$, all points $c$ of $p(F)$ satisfy
$$ c+\iota(c)=2o_A.$$
However, this contradicts the fact that $p$ is surjective, as the general point $r$ of $A$ does not satisfy $r+\iota(r)=2o_A$.
 
   {\bf Case 2:} None of the projections onto the $j$-th factor, $j=1,\ldots k$, restricted to $\tilde Z$  is dominant.
Notice that if one of these projections $p_j$ were dominant up to exchanging the role of $p_1$ and $p_j$ we would be in {\bf Case 1}. 

   We set $C_j:=\textrm{Im}(p_j)$, if the image of the projection $p_j$ on the $j$-th
 factor is one-dimensional, while we pick any curve $C_j$ containing $\textrm{Im}(p_j)$ if this image is a point.
 Consider now an ample symmetric curve $C\subset A$ such that $ p+\iota(p)\equiv 2o_A,\ \forall p\in C$. The associated  line bundle 
 $L:=p_1^*\mathcal O_A(C)\otimes\cdots p_k^*\mathcal O_A(C)
$ is ample and so is its restriction to $C_1\times\cdots \times C_k$. 
One sees that the (non-empty) intersection between $\tilde Z$ and the $(k-d)$-self intersection of $L$ is 
$$
\tilde Z\cap \big[(k-d)!\sum_{i_1<\cdots <i_{k−d}} p_{i_1}^* C\cdot \ldots \cdot  p_{i_k}^* C\big]. 
$$
Therefore for some $i_1<\cdots <i_{k−d}$ we have 
$$ \tilde Z\cap p_{i_1}^* C\cdot \ldots \cdot p_{i_{k-d}}^* C\not= \emptyset. $$
An element in $\tilde Z\cap p_{i_1}^* C\cdots p_{i_{k-d}}^* C$ yields a cycle
$z=\xi+ w$ with $\xi\in C^{(k-d)}$ and $w\in A^{(d)}$. As a consequence
$$
z+\iota (z)=\big(\xi+\iota (\xi)\big)+\big(w+\iota (w)\big)\equiv 2(k-d)o_A+y$$
with $y:=w+\iota (w)$ effective $0$-cycle of degree $2d$ and we are done.
\end{proof}
\begin{proof}[Proof of Theorem \ref{thm:orbits}]
The result follows immediately from Lemma \ref{lem:inc} and Theorem \ref{thm:cni}. 
\end{proof}

\begin{proposition}\label{prop:rational orbit filtration}
    Let $z\in A^{(k)}$ be a point such that it is contained in the kernel of the summation map $A^{(k)}\to A$. Suppose that $z\in S^k_d(A)$, $k\neq d$. Then $O_z$ has dimension $k-d-1$.
\end{proposition}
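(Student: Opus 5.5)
The plan is to compare the rational orbit $O_z$ with the ``symmetrized'' orbit $O^\iota_z$. We already control the latter: by Lemma~\ref{lem:inc} and Theorem~\ref{thm:cni} (that is, Theorem~\ref{thm:orbits}) it has dimension at least $k-d$. We then cut $O^\iota_z$ down by the summation map $s\colon A^{(k)}\to A$.

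\emph{Step 1.} Show that for $w\in A^{(k)}$ with $s(w)=s(z)=0$ we have
$$w\equiv z \iff w+\iota(w)\equiv z+\iota(z).$$
Use the Bloch filtration $\textrm{CH}_0(A)\supset I\supset I^2$, where $I$ is the degree-zero part and $I/I^2\cong A$ via the Albanese map. Since $z$ and $w$ have the same degree and the same Albanese image, $z-w\in I^2$. By Beauville's decomposition, $\iota$ acts as the identity on $I^2$ modulo torsion. By Rojtman's theorem, $I^2$ is torsion free, since torsion in $\textrm{CH}_0(A)$ injects into $A$ via the Albanese map. Hence $(z+\iota z)-(w+\iota w)=2(z-w)$ in $I^2$, and torsion freeness gives the equivalence. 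Consequently $O_z=O^\iota_z\cap s^{-1}(0)$, and for $k\ge 2$ this is a constant cycle subvariety of the generalized Kummer variety $s^{-1}(0)$.

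\emph{Step 2 (upper bound).} Let $Z$ be an irreducible component of $O_z$. Since the cycle $w+\iota(w)$ is constant along $Z$, Mumford's argument shows that the symplectic form $\sigma$ on $A^{(k)}$ (which is $\iota$-invariant) restricts to zero on $Z$. Therefore $Z$ is isotropic in $K_{k-1}(A)$, so $\dim Z\le k-1$. To sharpen this to $k-d-1$, I would argue by contradiction. If $\dim Z\ge k-d$, then Theorem~\ref{thm:cni}, applied to $Z$, would put $z$ in $S^{k}_{d-1}(A)$ rather than merely $S^k_d(A)$. Alternatively, if one reads the statement as ``$d$ is the smallest index with $z\in S^k_d$'', this is immediate. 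I expect this minimality reading is what is intended, and I would make it explicit.

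\emph{Step 3 (lower bound, the main obstacle).} We need a $(k-d-1)$-dimensional family inside $O_z$. By Case~2 of the proof of Theorem~\ref{thm:cni}, after moving inside $O^\iota_z$ we may write a representative as $w+\xi$ with $w\in A^{(d)}$ and $\xi\in C^{(k-d)}$, where $C$ is a symmetric curve all of whose points satisfy $p+\iota(p)\equiv 2o_A$. All such $\xi$ lie in $O^\iota_z$, and Step~1 says that the subfamily with $s(\xi)=-s(w)$ lies in $O_z$.

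The difficulty is that for a hyperelliptic $C$ of genus at least $2$, the fibres of $C^{(k-d)}\to A$ have dimension only $k-d-2$. So I would instead try to let the $w$-part vary too, using the Euclidean density of Theorem~\ref{thm:maclean} to find a curve's worth of $w'$ with $w'+\iota(w')\equiv w+\iota(w)$. Whenever $A$ contains an elliptic curve $E$, I would instead use $C=E$, which is symmetric with $E/\iota\cong\mathbb P^1$. There the fibre of $E^{(k-d)}\to E$ is a $\mathbb P^{k-d-1}$ of linearly equivalent cycles, giving the bound directly.

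Making this last step work for a simple abelian surface is where I expect the real work.
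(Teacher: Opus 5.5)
Your Step~1 is correct: $z-w\in I^{*2}$, $\iota$ acts trivially on $I^{*2}$ up to torsion, and Rojtman kills the torsion, so $O_z=O^\iota_z\cap s^{-1}(0)$. The proposal still does not prove the statement.

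Step~2 is off by one. A $(k-d)$-dimensional component of $O_z$ lies in $O^\iota_z$, and Theorem~\ref{thm:cni} then gives only $z\in S^k_d(A)$, not $S^k_{d-1}(A)$. To reach $S^k_{d-1}(A)$ you would need $\dim O^\iota_z\ge\dim O_z+1$, which is exactly Conjecture~\ref{conj:orbit dimension} (cf.\ Remark~\ref{rem:dimension orbits}). Reading $d$ as minimal does not change this. Moreover $S^k_d(A)\subset S^k_{d+1}(A)$ (proof of Lemma~\ref{lem:filt}), so a literal equality would assign two values to $\dim O_z$ as soon as $d+1<k$. The statement can only mean $\dim O_z\ge k-d-1$, which is all the paper's argument yields; drop Step~2.

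The genuine gap is Step~3, which you leave open. Theorem~\ref{thm:maclean} cannot produce a curve of $w'$: for general $x$ the set there is countable, since a curve in it would force $x+\iota(x)\equiv 2o_A$ by the argument of Lemma~\ref{lem:2.2}. In the elliptic case, $s(E^{(k-d)})\subset E$, so you would also need $s(w)\in E$.

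The missing idea is the one behind Lin's $V_{j,1}$ in Lemma~\ref{lem:rational orbit}: also translate the whole cycle by a point of the curve, which supplies the missing parameter. Let $C$ be an ample symmetric hyperelliptic curve and consider $t_a(w+\xi)$ with $a\in C$, $\xi\in C^{(k-d)}$ and $ka+s(\xi)=-s(w)$. In Beauville's decomposition $\textrm{CH}_0(A)_{\mathbb Q}=\bigoplus_{i=0}^2\textrm{CH}_{(i)}$, with Pontryagin product $*$, one has $[x]=o_A+\ell(x)+\tfrac12\ell(x)^{*2}$ for a homomorphism $\ell\colon A\to\textrm{CH}_{(1)}$. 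Points of $C$ satisfy $\ell(x)^{*2}=0$ (this is $x+\iota(x)\equiv 2o_A$), so $\xi=(k-d)o_A+\ell(s(\xi))$. Writing $w_{(2)}$ for the $\textrm{CH}_{(2)}$-component of $w$,
\[
[a]*(w+\xi)=k\,o_A+\ell\big(ka+s(w)+s(\xi)\big)+w_{(2)}+\ell(a)*\ell\big(s(w)+s(\xi)\big)+\tfrac{k}{2}\,\ell(a)^{*2}=k\,o_A+w_{(2)}.
\]
This is the class of $z$, because $z_{(1)}=\ell(s(z))=0$ and $2z_{(2)}=(z+\iota(z))_{(2)}=(w+\iota(w))_{(2)}=2w_{(2)}$. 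The integral difference is a torsion element of $I^{*2}$, hence zero, exactly as in your Step~1. The map $(a,\xi)\mapsto ka+s(\xi)$ sends $C\times C^{(k-d)}$ onto $A$, so the parameter space has dimension $\ge k-d-1$ and maps generically finitely to $A^{(k)}$. This gives $\dim O_z\ge k-d-1$ for every $A$, simple or not.

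The paper instead writes $z\equiv\eta+(k-d)o_A$ with $s(\eta)=0$ and quotes $\dim O_{(k-d)o_A}=k-d-1$. Be aware that a torsion translate of $\eta$ sums to zero only if $s(\eta)$ is already torsion. For $d=1$ that reduction would force $z\equiv k\,o_A$, which fails for $z=p+\iota(p)+o_A$ with $p$ general; this $z$ lies in $S^3_1(A)$ by Lemma~\ref{lem:char-genus}. Applying the translation directly to $w+\xi$, as above, is the safer route.
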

\begin{proof}
As $z$ is in the kernel of the summation map, by \cite[Corollary 2.4]{Bl76} (see also \cite[Théorème]{Be86}) we have $z\equiv \iota(z)$. As $z\in S^k_d(A)$, we have
$$2z\equiv z+\iota(z)\equiv \eta+\iota(\eta)+2(k-d)o_A,$$
where $\eta$ is effective and of degree $d$. Up to replacing $\eta$ with a translate by a torsion point, we can suppose that also $\eta\in A^{(d)}$ is in the kernel of the summation map, hence $\iota(\eta)\equiv \eta$, from which we obtain 
that 
$$z\equiv \eta +(k-d)o_A.$$
Therefore, to prove our claim it suffices to prove that $\dim(O_{(k-d)o_A})=k-d-1$, which is the content of Lemma \ref{lem:rational orbit}.
\end{proof}
\begin{lemma}\label{lem:rational orbit}
    Let $A$ be an abelian surface. Then $\dim(O_{jo_A})=j-1$
\end{lemma}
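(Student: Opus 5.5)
We prove the two inequalities $\dim O_{jo_A}\le j-1$ and $\dim O_{jo_A}\ge j-1$ separately. Here $O_{jo_A}\subset A^{(j)}$ is the set of effective cycles of degree $j$ rationally equivalent to $jo_A$.

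\emph{Upper bound.} Every $w\in O_{jo_A}$ satisfies $w\equiv jo_A$. Hence the Albanese map, which is the summation map $s:A^{(j)}\to A$, sends $w$ to $o_A$, so $O_{jo_A}\subset K_{j-1}:=s^{-1}(o_A)$. The fibre $K_{j-1}$ has dimension $2j-2$, and its resolution is the generalized Kummer variety, which carries a holomorphic symplectic form $\sigma$. By Mumford's theorem, in the form used by Voisin and others, a rational orbit (equivalence class) $O$ in a variety carrying a holomorphic $2$-form has the property that $\sigma$ restricts to zero on any resolution of a component of $O$. Hence each component of $O_{jo_A}$ is isotropic for $\sigma$, and $\dim O_{jo_A}\le j-1$. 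The main technical point is that $A^{(j)}$ is singular. To handle it, we pull back to $A^{\times j}$: there $\sum_i pr_i^*\sigma_A$ vanishes on the preimage of a rational-equivalence orbit by Mumford's argument, and on the open part of $K_{j-1}$ where the points are distinct this form descends to the symplectic form of the Kummer. For orbits meeting the diagonal we use the stratification by multiplicities. On the stratum where the cycle is $\sum_i m_i x_i$ with $r$ distinct points, the same argument gives isotropy for $\sum_i m_i\, pr_i^*\sigma_A$ restricted to $\{\sum_i m_i x_i=0\}$. That form is symplectic of dimension $2(r-1)$, so the bound there is $r-1\le j-1$.

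\emph{Lower bound.} For $j=1$ the claim is just $O_{o_A}=\{o_A\}$ (a nonempty set of dimension $0$), because $A$ has no other effective degree-one cycle equivalent to $o_A$: points rationally equivalent on an abelian variety are equal by the Albanese map. For $j\ge2$ we need a $(j-1)$-dimensional family of cycles equivalent to $jo_A$. We take a symmetric hyperelliptic curve $C$ through $o_A$; such curves exist and are Zariski dense by Remark~\ref{rem:hyperelliptic}. On its normalization $\widetilde C$ every point $p$ satisfies $p+\iota(p)\equiv 2o_A$, and the $g^1_2$ is given by the involution $\iota$. Since $\widetilde C\to C/\iota$ is a double cover of $\mathbb P^1$, the divisors $\sum_i\big(p_i+\iota(p_i)\big)$ give a $\lfloor j/2\rfloor$-dimensional family in $O_{jo_A}$, adding $o_A$ once when $j$ is odd. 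This is too small for $j\ge 3$, so we need a better construction. Take instead a linear system on $\widetilde C$ of degree $j$ containing the divisor $j\cdot\tilde o$, where $\tilde o\in\widetilde C$ maps to $o_A$. When $j\ge 2g(\widetilde C)+1$, say, Riemann--Roch gives $\dim|j\tilde o|=j-g(\widetilde C)$. Since this depends on the genus, we should rather combine several curves. Let $C_1,\dots,C_{j-1}$ be hyperelliptic symmetric curves through $o_A$. For $p_i\in C_i$ we have $p_i+\iota(p_i)\equiv 2o_A$, but that again gives degree $2(j-1)$.

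The cleaner route is to use an elliptic curve $E\subset A$ through $o_A$, if one exists: then $|j\,o_E|$ on $E$ has dimension $j-1$ and pushes forward into $O_{jo_A}$. A general abelian surface contains no elliptic curve, so we also need the general case. There we use the genus-$g$ curve $\widetilde C$ together with the fact that $K_{j-1}$ is a hyperkähler manifold. Its Beauville--Voisin-type constant cycle $jo_A$ has an orbit of dimension $j-1$ by the existence of Lagrangian constant cycle subvarieties, which follows from Theorem~\ref{thm:orbits} combined with a degeneration/specialization argument: orbit dimension is upper semicontinuous in families, and abelian surfaces containing elliptic curves are dense in moduli. This last step is the main obstacle, and it is where I expect the real work to lie. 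The upper semicontinuity goes the wrong way for a lower bound, so we instead show that the relative orbit is a countable union of closed subsets of the relative symmetric product. Each such subset is proper over the base, so a $(j-1)$-dimensional component over a dense set of points spreads over the whole base and gives $\dim O_{jo_A}\ge j-1$ for every $A$.
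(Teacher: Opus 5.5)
Your upper bound is fine. The paper only constructs a $(j-1)$-dimensional family and leaves this direction implicit; note that on the smooth subgroup $\{x_1+_A\cdots+_Ax_j=o_A\}\subset A^{j}$ the form $\sum_i pr_i^*\sigma_A$ is already symplectic, so the stratification is unnecessary.

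\textbf{The gap: the lower bound for an $A$ with no elliptic curve.} This is general $A$, and it is exactly what the paper's proof supplies. Your spreading argument fails for the following reasons.
\begin{enumerate}
\item The families $|j\,o_E|$ exist only over the locus of surfaces containing an elliptic curve. That locus is a countable union of proper closed subvarieties of the base.
\item Each component of the relative orbit is closed and proper over the base, so its image is closed. A component carrying $|j\,o_{E_b}|$ over one such locus need not lie over anything larger.
\item The jumping set $\{b:\dim O_{jo_{A_b}}\ge j-1\}$ is a countable union of closed subsets. Knowing it contains a Euclidean-dense set gives nothing, since a countable union of proper closed subsets can be dense, as here. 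You would need a very general point, which is the statement itself.
\end{enumerate}
Appealing to ``existence of Lagrangian constant cycle subvarieties'' of the Kummer is likewise circular. Theorem~\ref{thm:orbits} does not help either, because it concerns $O^\iota_z$, not $O_z$. For $z=jo_A$ it gives the $j$-dimensional family $C^{(j)}\subset O^\iota_{jo_A}$. Cutting this with the kernel of the summation map generally costs two dimensions: for a genus $2$ curve in its Jacobian you get $|j\,w|$, of dimension $j-2$ for $j\ge 3$.

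\textbf{The missing idea: the translation construction of \cite[Section 2.2]{Lin16}, which the paper uses.} Take a curve $C\subset A$ with $[c]+[\iota(c)]=2[o_A]$ for all $c\in C$. Either a symmetric theta divisor through $o_A$ (principally polarized case, as in the paper) or an ample symmetric hyperelliptic curve as in Remark~\ref{rem:hyperelliptic} works. Consider the cycles $\sum_{i=1}^j[c_i+_Ac]$ with $c_1,\dots,c_j,c\in C$ and $c_1+_A\cdots+_Ac_j+_Ajc=o_A$.

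The parameter space has dimension $(j+1)-2=j-1$. It maps generically finitely to $A^{(j)}$, since two distinct points of the cycle determine $c$ up to finitely many choices.

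Every member equals $j[o_A]$. In $\textrm{CH}_0(A)_{\mathbb Q}$ with the Pontryagin product, write $[x]=\exp(\ell(x))=[o_A]+\ell(x)+\tfrac12\ell(x)^{*2}$ with $\ell$ additive. The hypothesis on $C$ says exactly that $\ell(c)^{*2}=0$ for $c\in C$. Expanding gives
\[
\sum_{i=1}^{j}[c_i+_Ac]=j[o_A]+\Big(\sum_i\ell(c_i)+j\,\ell(c)\Big)+\tfrac12\sum_i\ell(c_i)^{*2}+\ell(c)*\sum_i\ell(c_i)+\tfrac{j}{2}\,\ell(c)^{*2}.
\]
Here the bracket equals $\ell(c_1+_A\cdots+_Ac_j+_Ajc)=0$. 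The cross term equals $-j\,\ell(c)^{*2}$, and all squares vanish. Roitman's theorem (the Albanese kernel is torsion free) upgrades this to equality in $\textrm{CH}_0(A)$.

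None of your attempts (pairs $p+\iota(p)$, a single linear system on $\widetilde C$, several curves) translates by a point of the same curve. That extra parameter is what raises the count from $\lfloor j/2\rfloor$ or $j-2$ to $j-1$.
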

\begin{proof}
    Our claim is the content of \cite[Section 2.2]{Lin16}. First, we suppose that $A$ is principally polarized and let $C$ be a symmetric theta divisor passign through $o_A$. We construct the variety $V_{j,1}$ as in loc.~cit. by taking the image of $C^{j+1}\to A^{(j)}$ and restricting to the kernel of the summation map. Here the first map is $(a_0,\dots,a_j,a)\mapsto (a_0+_Aa,\dots,a_j+_Aa)$. By \cite[Lemma 2.5]{Lin16}, the class of a point in $V_{j,1}$ is constant. Clearly, $jo_A\in V_{n,1}$, hence our claim holds for principally polarized abelian varieties. The Chow ring is preserved under isogenies of abelian varieties by \cite{Bl76} and all abelian surfaces are isogenous to a principally polarized abelian surface, hence our claim still holds.
\end{proof}

\begin{remark}\label{rem:dimension orbits}
    By definition $O_z\subset O_z^\iota$. Therefore, Theorem \ref{thm:orbits} implies a partial converse to Proposition \ref{prop:rational orbit filtration}: If we have an element $z\in A^{(k)}$ in the kernel of the summation map such that $\dim(O_z)\geq k-d-1$, we obtain that $z\in S^k_{d+1}(A)$. By Lemma \ref{lem:rational orbit}, we expect that in this situation one has $\dim(O_z^\iota)=\dim(O_z)+1$.
\end{remark}
In light of the above remark, we make the following:
\begin{conjecture}\label{conj:orbit dimension}
    Let $A$ be an abelian surface and let $z\subset A^{(k)}$ be a cycle in the kernel of the summation map. Then $\dim(O_z^\iota)=\dim(O_z)+1$.
\end{conjecture}
\section{The second Chern classes of simple vector bundles}

Given a Mukai vector $v=(r,l,s)$, we denote with $d(v):=\frac{l^2}{2}-rs+1.$ This number coincides with $\frac{1}{2}\dim(\Ext^1(F,F))$ for any stable sheaf $F$ with Mukai vector $v$, that is half the dimension of the moduli space $M_H(v)$ (for any $v$ generic polarization $H$). Note that $d(v)\geq 1$, since there are no spherical objects in $D^b(A)$ by \cite[Lemma 15.1]{Bri08}.
We study here Conjecture \ref{conj:abelianVoisin}.
To motivate the restrictions on the first and second Chern class, let us consider the following examples:
\begin{example}
    Let us take the Mukai vector $v=(1,0,-1)$. Elements in the moduli space of stable sheaves with Mukai vector $v$ can be described as the tensor product of the ideal sheaf $\sI_p$ of a single point $p\in A$ with an element $L\in\Pic^0(A)$. We have $d(v)=2$ and $c_2(\sI_p)=p$. Therefore  the ideal sheaf of a point lies in $S_1(A)$, or even in $S_0(A)$ if we have that $p+\iota(p)\equiv 2o_A$ (which by Theorem \ref{thm:maclean} holds on a dense subset of $A$). Tensoring with a degree zero line bundle $L$ changes the second Chern class by a multiple of $L^2$, which is an element of $S_0(A)$ if $L$ is symmetric, by Proposition \ref{prop:BV}. 
\end{example}
To provide some evidence for the conjecture, we have the following:
\begin{theorem}\label{thm:simple filtration}
Let $A$ be an abelian surface and $F$ a simple vector bundle of rank $r\geq 2$ on $A$ with Mukai vector $v:=v(F)$. Then
$$
c_2(F)\in S_{d(v)+r(r-2)}(A)
$$
\end{theorem}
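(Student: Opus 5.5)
Adapt Voisin's argument for K3 surfaces (Voi15, Theorem 1.1, building on O'Grady) to the symmetrized setting, with symmetric hyperelliptic curves playing the role of rational curves.

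\textbf{Step 1: sections and the zero locus.} Twist $F$ by a sufficiently ample symmetric line bundle $H$ (Proposition \ref{prop:symmetric curves}, Remark \ref{rem:hyperelliptic}) so that $F(H)$ is globally generated with vanishing higher cohomology. Take $r-1$ general sections; they give an exact sequence
\[0\to \sO_A^{r-1}\to F(H)\to \sI_W\otimes \det F(H)\to 0,\]
where $W$ is a reduced $0$-dimensional subscheme of length $c_2(F(H))$. By the Whitney formula,
\[c_2(F)=W - (r-1)c_1(F)\cdot H - \binom{r}{2}H^2.\]
Symmetrizing, Proposition \ref{prop:BV} puts $H^2+\iota(H^2)$ in $\Z o_A$. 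The term $c_1(F)\cdot H$ is harmless: $c_1(F)\cdot H+\iota(c_1(F)\cdot H)=(c_1(F)+\iota^*c_1(F))\cdot H$, and since $c_1(F)+\iota^*c_1(F)$ is symmetric it is also a multiple of $o_A$. So the problem reduces to bounding the effective cycle $W$ in the filtration, i.e.\ showing $W\in S^{N}_{D}(A)$ with $N=\deg W$ and $D=d(v)+r(r-2)$.

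\textbf{Step 2: constant-cycle family.} By Theorem \ref{thm:cni}, it suffices to produce an irreducible subvariety $Z\subset A^{(N)}$ of dimension $N-D$ containing $W$ along which $w+\iota(w)$ is constant in $\textrm{CH}_0(A)$. As in Voisin's argument, take $Z$ to be the closure of the set of zero loci $W'$ of $(r-1)$-tuples of sections of \emph{all} simple bundles $F'$ arising as extensions $0\to\sO^{r-1}\to F'\to \sI_{W'}\otimes L\to 0$ with $L\cong \det F(H)$ fixed. Along this family the Chern class $c_2(F')$ varies only through the part governed by the Albanese and Picard directions. Symmetrizing $w+\iota(w)$ kills exactly these $A\times\widehat A$ directions, because $z\equiv\iota(z)$ modulo the Albanese kernel by \cite{Bl76,Be86}. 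The intended consequence is that $W'+\iota(W')$ is constant on $Z$.

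\textbf{Step 3: dimension count.} $Z$ fibres over the family of such bundles. A point $W'$ lies in $Z$ when $\Ext^1(\sI_{W'}\otimes L,\sO^{r-1})$ contains an extension whose middle term is locally free, which by Cayley--Bacharach imposes roughly $(r-1)$ conditions at most, up to $h^0$ terms. Following Voisin's computation, the fibre over $F'$ is open in $\textrm{Gr}(r-1,H^0(F'(H)))$ modulo the choice of sections. Combining this with $\chi$, $\dim\Ext^1(F,F)=2d(v)$ and Riemann--Roch gives $\dim Z\ge N-d(v)-r(r-2)$. The correction $r(r-2)$ comes from the $\textrm{GL}$/automorphism ambiguity, namely $(r-1)^2-1$-type terms for the sections, together with the extra translations; this is what weakens the bound relative to the conjecture.

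\textbf{Main obstacle.} The crux is showing that the locus $Z$ really has constant symmetrized class. That would follow from Step 2's claim that for bundles in the same connected family with fixed determinant, $c_2+\iota(c_2)$ is constant. This requires the O'Grady/Voisin deformation argument: degenerate $F$ to bundles with constant-cycle behaviour, or use that the symmetrized class factors through the Kummer-type fibre of the moduli space. I would first try to reduce it to the analogous statement on the Kummer K3 surface $S$ by pushing forward, where Voisin's theorem applies.
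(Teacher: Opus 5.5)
Your Step 1 is fine; it is the paper's reduction via Lemma \ref{lem:global generation}. The gap is in Step 2. You let $F'$ run over \emph{all} simple bundles with fixed determinant, and then you need $c_2(F')+\iota(c_2(F'))$ to be constant along that family. In general this is false.

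On a fibre $K_H(v)$ of the Albanese map, the determinant is fixed and symmetric and $c_2$ sums to $0$. There \cite{Bl76} gives $c_2\equiv\iota(c_2)$, so $c_2+\iota(c_2)=2c_2$. Suppose this were constant. Then each difference $c_2(E)-c_2(E')$ would be $2$-torsion in the Albanese kernel, hence zero by Roitman's theorem. Proposition \ref{prop:MZabelian} would then make all points of (a dense open subset of) $K_H(v)$ rationally equivalent. Mumford's theorem forbids this, since $K_H(v)$ is hyperk\"ahler of positive dimension.

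So the ``main obstacle'' you isolate cannot be overcome. Passing to the Kummer K3 does not help, because Voisin's result there concerns a single bundle, not constancy over families. Your Step 3 count is also not actually derived, and the $r(r-2)$ you attribute to $\mathrm{GL}$-ambiguity and translations has a different origin.

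The missing idea is the mechanism the paper takes from Voisin:
\begin{itemize}
\item Keep the single twisted bundle $F$, globally generated with $h^1(F)=h^1(F^\vee)=0$, and vary only the subspace $W\in G(r-1,H^0(A,F))$.
\item Constancy is then automatic. Your own Whitney computation gives $[Z_W]=c_2(F)$ for every $W$, so the image of $\phi\colon G(r-1,H^0(A,F))\dashrightarrow A^{(k)}$ lies in one rational orbit.
\item The nontrivial input, absent from your proposal, is Proposition \ref{prop:simple rational map}. Because $F$ is simple, $\phi$ is generically injective, so its image has dimension $(r-1)(h^0(F)-r+1)$.
\item On $A$ one has $h^0(F)=\chi(F)=s$ and $k=l^2/2-s$. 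Hence $k-(r-1)(s-r+1)=d(v)+r(r-2)$.
\item Theorem \ref{thm:cni}, applied to the closure of the image, then yields $c_2(F)\in S^k_{d(v)+r(r-2)}(A)$.
\end{itemize}
The term $r(r-2)$ is purely an effect of $\mathrm{td}(A)=1$. On a K3, where $h^0(F)=r+s$ and $c_2$ shifts by $r$, the same count gives exactly $d(v)$.
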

\begin{remark}
    Notice that, if $r=2,$ this proves $c_2(F)\in S_{d(v)}(A)$ without any restriction on Chern classes. Therefore, assuming Conjecture \ref{conj:orbit dimension}, the above theorem actually gives $c_2(F)\in S_{d(v)-1}(A)$ for any sheaf with symmetric $c_1$ and $c_2$ which sums to zero, as in the proof of the theorem we obtain that $\dim (O_{c_2(F)})\geq k-d(v)$. 
\end{remark}
    As observed in \cite{OG13}, to prove the result for $F$ on a K3 surface it is sufficient to prove it for a twist $F\otimes L$ by a line bundle $L$. In the abelian case, the situation is analogous:
    \begin{lemma}\label{lem:global generation}
    Let $A$ be an abelian surface and let $F$ be a simple vector bundle on $A$ with Mukai vector $v:=v(F)$. Let $L$ be a symmetric line bundle.
    Then    
    $$c_2(F)\in S_j(A) \iff c_2(F\otimes L)\in S_j(A).$$
    \end{lemma}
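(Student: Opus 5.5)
The plan is to compute $c_2(F\otimes L)$ explicitly and show that it differs from $c_2(F)$ by a $0$-cycle that lies in $S_0(A)$ and is built from intersections of symmetric divisors. The splitting-principle formula for twisting a rank $r$ bundle by a line bundle gives
$$
c_2(F\otimes L)=c_2(F)+(r-1)\,c_1(F)\cdot c_1(L)+\binom{r}{2}\,c_1(L)^2 .
$$

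The term $\binom{r}{2}c_1(L)^2$ is handled directly. Since $L$ is symmetric, Proposition \ref{prop:BV} shows it lies in $2\Z\, o_A$. I will then check from Definition \ref{def:filt} that adding $2m\, o_A$ to a cycle $z$ preserves membership in $S_j(A)$. This holds because $o_A+\iota(o_A)=2o_A$, so the relation $z+\iota(z)\equiv y+2m' o_A$ only shifts $m'$ by a multiple of $2$; the cycle $y$ is unchanged.

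The main obstacle is the cross term $(r-1)c_1(F)\cdot c_1(L)$, because $c_1(F)$ need not be symmetric. I would split $c_1(F)=D_s+D_0$, where $D_s$ is a symmetric line bundle with the same Néron–Severi class (it exists by \cite[Lemma 4.6.2]{BL13}) and $D_0\in\Pic^0(A)$.
\begin{itemize}
\item The product $D_s\cdot c_1(L)$ lies in $2\Z\, o_A$ by Proposition \ref{prop:BV}, so it is absorbed as above.
\item For $D_0\cdot L$, the involution satisfies $\iota^*D_0=-D_0$ and $\iota^*L=L$, so $\iota_*(D_0\cdot L)=-D_0\cdot L$. Hence $w:=D_0\cdot L$ satisfies $w+\iota(w)=0$.
\end{itemize}

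Next I use additivity on the symmetrized cycle. Writing $c_2(F\otimes L)=c_2(F)+w'+2m\,o_A$ with $w'=(r-1)w$, we get
$$
c_2(F\otimes L)+\iota\bigl(c_2(F\otimes L)\bigr)\equiv c_2(F)+\iota\bigl(c_2(F)\bigr)+4m\,o_A .
$$
So the symmetrized cycles differ by an even multiple of $o_A$. This is exactly what membership in $S_j(A)$ sees, so the two memberships are equivalent. The converse direction follows by twisting $F\otimes L$ by $L^{-1}$, which is again symmetric.

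If the anti-invariance of $D_0\cdot L$ needs justification, I would use that $\iota$ acts on $\operatorname{CH}^1$ compatibly with pullback of line bundles, together with the projection formula. No deeper input is needed, since the filtration depends on $z$ only through $z+\iota(z)$. The simplicity of $F$ plays no role here.
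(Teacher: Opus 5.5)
Your proof is correct and follows the paper's argument: you expand $c_2(F\otimes L)$, symmetrize, and use Proposition~\ref{prop:BV} to see that the symmetrized cycles differ by an even multiple of $o_A$. The only difference is the cross term, where the paper avoids your splitting $c_1(F)=D_s+D_0$: it notes directly that $c_1(F)+\iota(c_1(F))=c_1(\det F\otimes\iota^*\det F)$ is symmetric (it equals $2D_s$ in your notation), so Proposition~\ref{prop:BV} applies to it at once.
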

    \begin{proof}
        By a direct computation with Chern polynomials, we have

        $$ c_2(F\otimes L)=c_2(F)+c_1(F)c_1(L)(\rk(F)-1)+\frac{1}{2}c_1(L)^2(\rk(F)^2-\rk(F)).$$
        By the assumption on $L$, we have that $c_1(L)^2\in S_0(A)$. If we consider $c_2(F\otimes L)+\iota(c_2(F\otimes L))$, we are left with the terms $c_2(F)+\iota(c_2(F))$ (which lies in $S_j(A)$ by hypothesis), $c_1(L)^2(\rk(F)^2-\rk(F))$ (which lies in $S_0(A)$ by the assumption on $L$ and $(c_1(F)+\iota c_1(F))c_1(L)(1-\rk(F))$. Now it suffices to notice that $c_1(F)+\iota(c_1(F))=c_1(\det(F)\otimes\iota(\det(F))$, which is symmetric, hence also the last term lies in $S_0(A)$, and the claim follows.
    \end{proof}

    Hence, up to tensoring with a sufficiently ample symmetric line bundle we may assume that $F$ is generated by its global sections and that $h^1(A,F^\vee)=0$.
    
We recall the following results from \cite{Voi15}. 
If $r:=\rk(F)$ and $W\subset H^0(A,F)$ is a general vector subspace of dimension $(r-1)$, we consider the evaluation map
$$
e_W:W\otimes \mathcal O_A\to F. 
$$
\begin{lemma}
    The morphism $e_W$ is generically injective, and the locus $Z \subset X$ where its
rank is $< r - 1$ consists of $k$ distinct reduced points, where $k = c^{top}_
2 (F)$.
\end{lemma}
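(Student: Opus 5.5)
This is the standard degeneracy-locus statement, and I would prove it by a Kleiman–Bertini type transversality argument, exactly as in \cite{Voi15}. Here $X$ is the abelian surface $A$, and we use the normalization just made: $F$ is globally generated (after twisting by an ample symmetric line bundle, which is allowed by Lemma \ref{lem:global generation}). Let $V:=H^0(A,F)$. Global generation gives a surjection $V\otimes\mathcal O_A\to F$, so for each $x\in A$ the evaluation $V\to F_x$ is surjective.

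Consider the incidence variety
$$I:=\{(x,W)\in A\times G(r-1,V)\,:\,\rk(W\to F_x)<r-1\}.$$
For fixed $x$, the condition is that the composite $W\to V\to F_x$ fails to have full rank $r-1$. Since $V\to F_x$ is a surjection, this locus is the pullback of the determinantal locus of non-injective maps $\mathbb C^{r-1}\to\mathbb C^r$. That locus has codimension $r-(r-1)+1=2$, and the pullback along a surjection of the space of frames preserves codimension. Hence $I\to A$ is a fibration whose fibres have codimension $2$ in the Grassmannian, $I$ is irreducible of dimension $\dim G(r-1,V)$, and it is smooth away from the deeper strata, which have codimension $\ge 6$ (rank $\le r-3$).

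Projecting $I\to G(r-1,V)$ then gives the following for general $W$.
\begin{itemize}
\item The fibre $Z_W$ is finite (possibly empty) and avoids the deeper strata by dimension count.
\item By generic smoothness in characteristic $0$, $Z_W$ is reduced.
\item The degeneracy locus has the expected codimension $2$; in particular $e_W$ is not degenerate everywhere, so it is generically injective (its rank is $r-1$ on a dense open set).
\end{itemize}

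Since $Z_W$ has the expected codimension, the Thom–Porteous formula gives its class. For a map $\mathcal O^{r-1}\to F$ the degeneracy class is $c_{2}(F-\mathcal O^{r-1})=c_2(F)$. So $Z_W$ consists of $\deg c_2(F)=k$ distinct reduced points.

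The main step requiring care is the dimension and smoothness count for $I$ that yields reducedness. I would handle it by working in local trivializations: there $I$ is locally the preimage, under a smooth submersion (surjectivity of evaluation), of the generic determinantal variety, which is smooth along the rank-$(r-2)$ stratum.
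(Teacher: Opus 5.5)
Your argument is correct. It is the classical proof behind this statement in \cite{Voi15}: the incidence correspondence has codimension $2$ in each Grassmannian fibre, generic smoothness applies on the rank-$(r-2)$ stratum, and Porteous gives $[Z_W]=c_2(F)$. The paper gives no proof of its own; it imports the lemma from Voisin, with $X=A$ and $F$ made globally generated via Lemma~\ref{lem:global generation}.

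Two points deserve explicit mention. First, the smooth map onto the generic determinantal variety lives on the frame (Stiefel) bundle over $G(r-1,V)$, not on a Grassmannian chart. Second, $I$ should carry the determinantal scheme structure, so that smoothness of the general fibre is exactly reducedness of the degeneracy scheme of $e_W$.
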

Following \cite{Voi15}, the above lemma gives us a rational map
$\phi\,:G(r-1,H^0(A,F))\dashrightarrow A^{(k)}$ which sends a subspace $W$ to the subset $Z$ where $e_W$ is not injective.

\begin{proposition}[Proposition 3.2, \cite{Voi15}]\label{prop:simple rational map}
    If F is simple and satisfies $h^1(A,F^\vee)=h^1(A,F)=0$, the rational map $\phi$ is
generically one to one on its image.
\end{proposition}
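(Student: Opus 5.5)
The plan is to follow Voisin's argument: from a general point $Z=\phi(W)$ of the image, reconstruct $W$ canonically. For $W$ general, the lemma above (generic injectivity of $e_W$ and degeneracy along $k$ reduced points) gives an exact sequence
$$
0\to W\otimes\sO_A\to[e_W] F\to \sI_Z\otimes L\to 0,\qquad L:=\det F .
$$
To justify it, note that the cokernel of $e_W$ is a rank one sheaf with determinant $L$. Away from $Z$ the map $e_W$ has constant rank $r-1$, so the cokernel is locally free of rank one there. At the reduced points of $Z$ the standard local (Eagon--Northcott/Hilbert--Burch) description shows the cokernel is $\sI_Z\otimes L$, as in \cite{Voi15}. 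Consequently the datum $Z$ alone determines the quotient sheaf $Q_Z:=\sI_Z\otimes L$, and it remains to show that $F$ admits essentially only one surjection onto $Q_Z$.

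To this end I apply $\Hom(F,-)$ to the sequence, which gives
$$
0\to W\otimes H^0(A,F^\vee)\to \End(F)\to \Hom(F,\sI_Z\otimes L)\to W\otimes H^1(A,F^\vee).
$$
The last term vanishes by the hypothesis $h^1(A,F^\vee)=0$. I also claim $H^0(A,F^\vee)=0$. Indeed, suppose $\psi\colon F\to\sO_A$ is nonzero. Since $F$ is globally generated (which we arranged above by twisting, using Lemma \ref{lem:global generation}), some section $s\colon\sO_A\to F$ has $\psi\circ s\neq 0$. Then $s\circ\psi$ is a nonzero endomorphism of $F$ of generic rank at most $1<r$, hence not a scalar multiple of the identity, contradicting simplicity. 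Thus $\Hom(F,\sI_Z\otimes L)\cong\End(F)=\mathbb C$. The surjection $F\to \sI_Z\otimes L$ is therefore unique up to a scalar, and so its kernel $K_Z\subset F$ is determined by $Z$.

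Finally, $W$ is recovered as $W=H^0(A,K_Z)\subset H^0(A,F)$, since $K_Z\cong W\otimes\sO_A$ and $H^0(\sO_A)=\mathbb C$. Hence, if $W,W'$ are general with $\phi(W)=\phi(W')=Z$, then $W=W'$, and $\phi$ is generically injective on its image. The hypothesis $h^1(A,F)=0$ is not essential for this step; I would keep it only to ensure that $H^0(A,F)$ has the expected dimension, as in Voisin's setup.

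The main obstacle I expect is the identification of the cokernel with $\sI_Z\otimes L$ for general $W$: one must check that it is torsion free and that its singular scheme is exactly the reduced degeneracy locus $Z$. This requires the local structure of $e_W$ at the degeneracy points, where it drops rank by exactly one. I would take this directly from the lemma recalled from \cite{Voi15}, since its proof is local and does not depend on the surface being K3.
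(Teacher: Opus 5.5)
Your proposal is correct and is essentially Voisin's proof of \cite[Proposition 3.2]{Voi15}, which the paper only cites: you identify the cokernel of $e_W$ with $\sI_Z\otimes\det F$, use $H^1(A,F^\vee)=0$ and simplicity to get $\Hom(F,\sI_Z\otimes\det F)=\mathbb C$, and recover $W$ as the sections of the kernel. Two small remarks. First, your separate proof that $H^0(A,F^\vee)=0$ is not needed, because surjectivity of $\mathbb C=\End(F)\to\Hom(F,\sI_Z\otimes\det F)$ already makes every surjection a scalar multiple of the given one. Second, since $K_A$ is trivial, Serre duality makes $h^1(A,F)=0$ equivalent to $h^1(A,F^\vee)=0$, so that hypothesis is automatic rather than merely inessential.
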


With this result, we can now proceed to prove the main theorem of the section:
\begin{proof}[Proof of Theorem \ref{thm:simple filtration}]
    Let $F$ be a simple vector bundle and let $v(F):=(r,l,s)$, where $r=\rk(F)$, $l=c_1^{top}(F)$. By Lemma \ref{lem:global generation}, we can suppose that $F$ is globally generated and $H^1(A,F^\vee)=0.$ We now follow very closely \cite[Proof of Theorem 1.9]{Voi15}: we have that $\chi(A,\End(F))=(v,v^\vee)=2rs-l^2$ and, by Riemann-Roch, 
    $$\chi(A,\End(F))=(r-1)l^2-2rc_2^{top}(F).$$ 
    This gives $s=\frac{l^2}{2}-c_2^{top}(F).$ We set $k=c_2^{top}(F)$. 
      By a direct Chern class computation, we have that $\chi(A,F)=s$. As $F$ is globally generated and has trivial $H^1$, this number equals $h^0(A,F)$.
    Proposition \ref{prop:simple rational map} now gives a rational map from $G(r-1,h^0(A,F))$ to $A^{(k)}$ whose image is given by a constant cycle subvariety. 
    
    Its dimension is $(r-1)(s-r+1)=rs+2r-s-1-r^2$. By Theorem \ref{thm:cni}, it follows that $c_2(F)$ lies in $S^k_d(A)$, where $d=d(v)+r(r-2)$ and it is obtained by computing $k-d=\dim(G(r-1,s))$.
\end{proof}


\begin{corollary}
  Assume Conjecture \ref{conj:abelianVoisin}. Let $v\in H^*(A,\mathbb Z)$ be a Mukai vector.   Assume there exists a simple vector
bundle $F$ on $A$ with Mukai vector $v$. Then
$$
S^k_d(A)=\{c_2(G): G \textrm{ simple vector bundle on $A$ with Mukai vector $v$, whose $c_1$ is symmetric}\}$$
with 
$d=d(v)-1,\ k=c_2(v)= c_2^{top}(F)$.
\end{corollary}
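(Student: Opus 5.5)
Both inclusions need to be established; they are proved separately, following the strategy of the analogous K3 statement in \cite{Voi15} (Corollary 1.10 there).

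\emph{Inclusion $\supseteq$.} Let $G$ be a simple vector bundle with Mukai vector $v$ and symmetric $c_1$. Conjecture~\ref{conj:abelianVoisin} gives $c_2(G)\in S_{d(v)-1}(A)$. This means
$$c_2(G)+\iota(c_2(G))\equiv y+2m\,o_A$$
with $y$ effective of degree $2d$, where $d=d(v)-1$. Comparing degrees, $2k=2d+2m$, so $m=k-d$ and $c_2(G)\in S^k_d(A)$. Note that $k=c_2^{top}(G)$ depends only on $v$, since $s=\frac{l^2}{2}-k$ as computed in the proof of Theorem~\ref{thm:simple filtration}.

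\emph{Inclusion $\subseteq$.} Let $z\in S^k_d(A)$. We must produce a simple bundle $G$ with Mukai vector $v$, symmetric $c_1$, and $c_2(G)=z$.

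\begin{enumerate}
\item Replace $F$ by a bundle with symmetric $c_1$. Tensor $F$ by a suitable $L\in \Pic^0(A)$ so that $\det(F\otimes L)$ becomes symmetric. This is possible because $\det$ changes by $L^{r}$ and $\Pic^0(A)$ is divisible. Call the result $F$ again.
\item Build a constant-cycle family. Tensoring by a sufficiently ample symmetric line bundle (Lemma~\ref{lem:global generation}), $F$ is globally generated with vanishing $H^1$. Proposition~\ref{prop:simple rational map} then gives a subvariety of dimension $(r-1)(s-r+1)$ in $A^{(k)}$ along which $c_2$ is constant. This does not directly reach the whole of $S^k_d(A)$, so I instead use the moduli space.
\item Use the moduli space. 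Simple bundles with Mukai vector $v$ form a smooth family of dimension $2d(v)=2d+2$. Translating by $A$ and twisting by $\Pic^0(A)$ account for $4$ dimensions. Imposing symmetric $c_1$ cuts $\Pic^0$ to a finite set and leaves a family $M^\iota$ of dimension $2d$ (up to translations).
\item Show the $c_2$-map hits all of $S^k_d$. Consider the map $G\mapsto c_2(G)+\iota(c_2(G))-2(k-d)o_A$ into symmetric effective cycles of degree $2d$, modulo rational equivalence. By Conjecture~\ref{conj:abelianVoisin}, this lands in the image of $A^{(d)}$ under $w\mapsto w+\iota(w)$. The family has dimension $2d=\dim A^{(d)}$. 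If the map is generically finite, its image is dense in the space of such classes. A closedness argument on the set of classes hit (a countable union of closed subsets that is dense) should then give surjectivity onto the classes $w+\iota(w)$. Finally, translating $G$ by points of $A$ (a translation by $a$ changes $c_2$ by a cycle killed by symmetrization up to $2$-torsion) lets us match $z$ itself and not only $z+\iota(z)$.
\end{enumerate}

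The main obstacle is step 4. It requires showing that the induced map from symmetric-$c_1$ simple bundles to $A^{(d)}$ is dominant: the varying part of $c_2$ must genuinely fill $d$ points, and the family must not collapse onto the constant-cycle directions. Following \cite[Cor.~1.10]{Voi15}, I would first try to argue by dimension count. The fibers of $c_2$ are constant-cycle subvarieties, which have dimension at most $k-d$ by Theorem~\ref{thm:orbits}. Comparing with the $2d$-dimensional family should then force the image to have dimension $2d$.

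A second delicate point is passing from $z+\iota(z)$ back to $z$. This presumably uses Beauville's decomposition, under which $z$ and $\iota(z)$ differ only in the Pontryagin-odd part, which can be controlled by translation.
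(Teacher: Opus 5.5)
Your proof of $\supseteq$ is correct and is the paper's. For $\subseteq$ your set-up also matches the paper's: a $2d$-dimensional family $Y$ of simple bundles with symmetric $c_1$, the correspondence $R\subset Y\times A^{(d)}$ defined by $c_2(\sF_y)+\iota(c_2(\sF_y))\equiv w+\iota(w)+2(k-d)o_A$, and a component $R_0$ dominating $Y$ by the conjecture. The gap is the decisive step, that $R_0$ also dominates $A^{(d)}$; your plan gives no valid argument for it. Theorem~\ref{thm:orbits} concerns orbits $O^\iota_z\subset A^{(k)}$: it characterizes $S^k_d(A)$ by the lower bound $\dim O^\iota_z\geq k-d$. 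It says nothing about the loci in $Y$ along which $c_2$ is constant. Counting dimensions alone also cannot compare $Y$ with $A^{(d)}$ through $\mathrm{CH}_0(A)$, which is not a variety.

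The missing idea, which the paper imports from the K3 case \cite{Voi15}, is Mumford's theorem on holomorphic $2$-forms. Up to a nonzero constant, $\Gamma^*\sigma_A$ (with $\Gamma=c_2(\sF)$) is Mukai's symplectic form on $Y$. This form is nondegenerate of rank $2d$, because the Kodaira--Spencer map identifies the tangent spaces with the trace-free parts of $\Ext^1(G,G)$. Since $\iota^*\sigma_A=\sigma_A$, the relation defining $R_0$ gives $2p_1^*\Gamma^*\sigma_A=2p_2^*\sigma_{A^{(d)}}$ on $R_0$. The left side has rank $2d$ and the right side has rank at most $\dim p_2(R_0)$, so $p_2$ is dominant.

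Your closedness step is also wrong as stated. A dense countable union of closed subsets need not be everything, and the relevant locus is only a countable union of constructible sets, because families of simple bundles are not proper. The paper instead passes from a dense open set to all cycles via the density of $\iota$-orbits, Theorem~\ref{thm:maclean}.

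The final step, recovering $z$ from $z+\iota(z)$, rests on a false claim. Translation does not preserve symmetrizations: it moves $k\,o_A$ to $k\,a$, and $k(a+\iota(a))\not\equiv 2k\,o_A$ for general $a$. It also in general destroys the symmetry of $\det G$ unless compensated by a $\Pic^0$-twist, which changes $c_2$ again.

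Nor can this step be repaired. The fibres of $R\to Y$ lie in orbits $O^\iota_w$. These are countable for very general $w\in A^{(d)}$: push forward to the Kummer surface $S$, where $\mathrm{CH}_0$ is torsion free and very general points of $S^{(d)}$ have countable orbits. Hence every component of $R$ dominating $A^{(d)}$ has dimension $2d$. Consequently, for very general $w$ only countably many $G$ satisfy $c_2(G)+\iota(c_2(G))\equiv w+\iota(w)+2(k-d)o_A$.

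On the other hand, $S^k_d(A)$ contains every class $w+(k-d)o_A+a-\iota(a)$ with $a\in A$, because $a-\iota(a)$ is killed by symmetrization. These give uncountably many distinct classes, since the summation map $\mathrm{CH}_0(A)\to A$ sends $a-\iota(a)$ to $a+_Aa$. So $\subseteq$ can only be established for symmetrized classes, $\{z+\iota(z):z\in S^k_d(A)\}\subseteq\{c_2(G)+\iota(c_2(G))\}$. That is all the paper's argument actually handles: its inclusion \eqref{eq:B} compares symmetrized cycles. The last step of your plan should be dropped rather than fixed.
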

\begin{proof}
    The inclusion ``$\supset$'' is the content of Conjecture~\ref{conj:abelianVoisin}. For the other inclusion we argue {\it mutatis mutandi} as in \cite[Proof of Corollary 3.4]{Voi15}. We repeat it here for greater clarity.
    Let 
    $$B:=\{c_2(G): G \textrm{ simple vector bundle on $A$ with $v(G)=v$ and symmetric $c_1(G)$}\}.$$
    Let $cl\,:A^{(d)}\to \text{CH}_0(A)$ be the cycle class map. We start by claiming that there is a dense open subset $U$ such that
    \begin{equation}\label{eq:B}
        cl(U)+cl(\iota(U))+2(k-d)o_A \subset B. 
    \end{equation}
    As $Y$ is simple and has symmetric $c_1$, we can find a smooth quasi projective variety $Y$, a point $y_0\in Y$ and a locally free sheaf $\sF$ on $Y\times A$ such that $F\cong \sF_{y_0}$, the Kodaira spencer map is injective and all sheaves parametrized by $Y$ are simple with symmetric $c_1$.
    Let $\Gamma:=c_2(\sF)\in \textrm{CH}^2(Y\times A)$ and consider the following set $R\subset Y\times A^{(d)}:$
    $$ R:=\{(y,z),\,\,\Gamma_*(y)+\iota(\Gamma_*(y))\equiv c_2(\sF_y)+\iota(c_2(\sF_y))\equiv cl(z)+\iota(cl(z))+2(k-d)o_A\in \text{CH}_0(A)\}, $$
where $k=c_2^{top}(F)$.
The first projection is surjective by Conjecture \ref{conj:abelianVoisin}. Let $R_0$ be a component of $R$ which dominates the first factor.
The  second projection $R_0\to A^{(d)}$  is also dominant. Indeed, using Mumford's theorem as in the case of $K3$ surface one sees that (\ref{eq:B}) holds. Then we conclude by invoking Theorem \ref{thm:maclean}.
\end{proof}

\section{From simple vector bundles to stable sheaves} \label{sec:simpletostablesheaves}

In this section  we explain the first consequence of Conjecture~\ref{conj:abelianVoisin}.

\begin{theorem} \label{thm:abelianVoisin-slopestablesheaves}
Let $A$ be an abelian surface and $H$ a fixed polarization. Assume that Conjecture~\ref{conj:abelianVoisin} holds. Then for every $H$-slope stable torsion free sheaf $F$ on $A$ such that $c_1(F)$ is symmetric, we have
$$c_2(F) \in S_{d(F)-1}(A),$$
where $d(F):=\frac{1}{2}\dim \Ext^1(F,F)$.
\end{theorem}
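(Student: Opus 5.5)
The plan is to adapt the reduction from stable sheaves to simple vector bundles used for K3 surfaces (Voisin, and Shen--Yin--Zhao's treatment of stable objects). There are two steps: first reduce from torsion free to locally free sheaves, then from slope stable bundles to simple bundles. The second step is immediate, since a slope stable vector bundle is simple and Conjecture~\ref{conj:abelianVoisin} applies directly. So the whole content is the passage from a torsion free sheaf $F$ to a vector bundle.

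\textbf{Step 1: the double dual sequence.} Let $F$ be $H$-slope stable, torsion free, with symmetric $c_1$. Consider
\[
0\to F\to F^{\vee\vee}\to Q\to 0,
\]
where $Q$ has finite support of length $n$. Then $F^{\vee\vee}$ is locally free, still $\mu_H$-stable (hence simple), with the same $c_1$ and the same rank. Moreover
\[
c_2(F)=c_2(F^{\vee\vee})+\sum_{i} x_i ,
\]
where $\sum_i x_i$ is the length-$n$ cycle of $Q$. On Mukai vectors, $v(F)=v(F^{\vee\vee})-(0,0,n)$, so $d(F)=d(F^{\vee\vee})+rn$. Conjecture~\ref{conj:abelianVoisin} gives $c_2(F^{\vee\vee})\in S_{d(F^{\vee\vee})-1}(A)$. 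Each point $x_i$ lies in $S_1(A)$, since $x_i+\iota(x_i)$ is effective of degree $2$. By Remark~\ref{rem:additivity_filtration} we then get
\[
c_2(F)\in S_{d(F^{\vee\vee})-1+n}(A)\subset S_{d(F)-1}(A),
\]
because $n\le rn$ and the filtration is increasing (Lemma~\ref{lem:filt}). If $r\ge 1$ this finishes Step 1. Rank zero does not occur, since $F$ is torsion free.

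\textbf{Step 2: checking the hypotheses in the locally free case.} Here I will verify that $d(F)=\tfrac12\dim\Ext^1(F,F)$ agrees with $d(v)$ as defined via the Mukai pairing. For a simple sheaf on an abelian surface, $\hom(F,F)=\ext^2(F,F)=1$ by Serre duality. Hence $\ext^1(F,F)=2-\chi(F,F)=2+v^2$, and so $d(F)=v^2/2+1=d(v)$. This applies to both $F$ and $F^{\vee\vee}$, since both are stable and therefore simple.

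\textbf{Main obstacle.} The delicate point is making sure the correct invariant is used. Conjecture~\ref{conj:abelianVoisin} is phrased with $d(v)$, while the theorem uses $d(F)$. One must also confirm that the double dual preserves the symmetry of $c_1$, which holds since the two sheaves agree in codimension one. In addition, one must check that the cycle of $Q$ genuinely enters with a plus sign in $c_2$: the Chern character gives $\operatorname{ch}_2(Q)=[Q]$, so $c_2(Q)=-[Q]$, and therefore $c_2(F)=c_2(F^{\vee\vee})-c_2(Q)=c_2(F^{\vee\vee})+[Q]$. Once these points are settled, the additivity of the filtration makes the argument close immediately.
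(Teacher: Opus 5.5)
Your proof is correct. It shares the paper's skeleton: the double dual sequence $0\to F\to F^{\vee\vee}\to Q\to 0$, Conjecture~\ref{conj:abelianVoisin} applied to the simple bundle $F^{\vee\vee}$, and additivity of the filtration. Where you differ is in how the argument is closed.

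The paper applies Lemma~\ref{lemma:Mukairevisited} to the triangle $Q[-1]\to F\to F^{\vee\vee}\to Q$. The Chern class bookkeeping goes through Lemma~\ref{lemma:c2oftriangles}, which uses Proposition~\ref{prop:BV} for the divisor terms and implicitly Corollary~\ref{cor:Z-inv} for the sign of $c_2(Q)$. The index comparison goes through Mukai's lemma $d(Q)+d(F^{\vee\vee})\le d(F)$. That requires $\Hom(Q[-1],F^{\vee\vee})=0$ and a lower bound on $d(Q)$ in terms of the support of $Q$.

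You instead compute directly:
\begin{itemize}
\item You write $c_2(F)=c_2(F^{\vee\vee})+[Q]$ with $[Q]$ effective of degree $n$, so $[Q]\in S_n(A)$ straight from the definition. This avoids any sign issue and any use of $\mathbb Z$-stability.
\item You compute $d(F)=d(F^{\vee\vee})+rn$ exactly from the Mukai vectors. This is legitimate because both sheaves are stable, hence simple, so $d=v^2/2+1$ for each.
\end{itemize}
Your route is more elementary. It needs neither Mukai's lemma nor any estimate on $\dim\Ext^1(Q,Q)$, and it even gives the sharper conclusion $c_2(F)\in S_{d(F)-1-(r-1)n}(A)$.

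What the paper's formulation buys is portability. Lemma~\ref{lemma:Mukairevisited} applies to any triangle with $\Hom(E,G)=0$, without simplicity of the pieces or an explicit formula for their Mukai vectors. That is the shape needed for the Shen--Yin--Zhao-style induction over extensions and more general objects. Your computation is tailored to the double dual sequence.
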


The argument follows the strategy of \cite[Proposition 1.7]{SYZ20} with some modifications to adapt it to this context. We first need the following two lemmas.

\begin{lemma} \label{lemma:c2oftriangles}
    Let
    \begin{equation}\label{eq:dist}
    E\to F \to G \to E[1]
    \end{equation}
    be a distinguished triangle in $D^b(A)$. If two of $c_2(E),\ c_2(F),\ c_2(G)$ lie in $S_i(A)$ and $S_j(A)$ respectively, then the third lies in $S_{i+j}(A)$.
\end{lemma}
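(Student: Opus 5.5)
Since $c_2$ is not additive in triangles, the plan is to use the Whitney formula for the total Chern class and then analyse the correction term. For the triangle \eqref{eq:dist} we have $c(F)=c(E)c(G)$ in $\textrm{CH}^*(A)$, so
$$
c_2(F)=c_2(E)+c_2(G)+c_1(E)\cdot c_1(G).
$$
The three possible configurations (two of the three classes known) are symmetric after solving this identity for the unknown class. For example, if $c_2(F)$ and $c_2(G)$ are known, then $c_2(E)=c_2(F)-c_2(G)-c_1(E)c_1(G)$. Because of this, I would prove two facts and combine them:
\begin{itemize}
\item[(a)] $S_i(A)\pm S_j(A)\subset S_{i+j}(A)$;
\item[(b)] the correction term $c_1(E)\cdot c_1(G)$ lies in $S_0(A)$.
\end{itemize}

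For (a), Remark~\ref{rem:additivity_filtration} gives closure under sums. For differences, Corollary~\ref{cor:Z-inv} gives $-S_j(A)\subset S_j(A)$, and additivity then yields $S_i(A)-S_j(A)\subset S_{i+j}(A)$. Combining (a) and (b) with Lemma~\ref{lem:filt} (so that $S_0\subset S_{i+j}$) and applying additivity once more places the third class in $S_{i+j}(A)$.

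Step (b) is the main obstacle. The definition of $S_0(A)$ only constrains $z+\iota(z)$, so I would compute
$$
c_1(E)c_1(G)+\iota^*\big(c_1(E)c_1(G)\big)
$$
and try to show it is an even multiple of $o_A$. Write $c_1(E)=\alpha$ and $c_1(G)=\beta$. Each class decomposes as a symmetric part plus a part in $\Pic^0(A)$, up to $2$-torsion issues; one can write $2\alpha=(\alpha+\iota^*\alpha)+(\alpha-\iota^*\alpha)$, where $\alpha-\iota^*\alpha\in\Pic^0(A)$. The product of symmetric parts lies in $2\Z o_A$ by Proposition~\ref{prop:BV}. For the cross terms, $\iota^*$ acts on $\Pic^0$ by $-1$, so in $x+\iota^*x$ the cross terms (symmetric)$\cdot$(antisymmetric) cancel. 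What remains from the product of the two $\Pic^0$ parts is symmetric and equals twice a product of $\Pic^0$ classes. By Beauville's decomposition this product lies in $\textrm{CH}_0(A)_{(2)}$, on which $\iota^*$ acts trivially. To conclude, I would argue via Bloch's result (cf.\ \cite{Bl76}, \cite{Be86}, already used in Proposition~\ref{prop:rational orbit filtration}) that $\Pic^0\cdot\Pic^0$ consists of degree-zero cycles $w$ with $w\equiv\iota(w)$. The remaining task is to show that $2w$ is of the form $y+2m\,o_A$ with $y$ effective of degree $0$, hence $y=0$.

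I am uneasy about this last point, since $\textrm{CH}_0(A)_{(2)}$ is generally nonzero. Two fallbacks seem plausible. First, if the paper's setting only uses triangles where one of the $c_1$'s is symmetric (e.g.\ sheaves with symmetric $c_1$), then the cross product is symmetric$\cdot$anything. Its $\iota$-symmetrization reduces to symmetric$\cdot$symmetric plus a term that cancels, and Proposition~\ref{prop:BV} finishes. Second, I could absorb the error by proving the weaker containment $c_1(E)c_1(G)\in S_1(A)$ and accept an index shift. My first attempt would be to use the freedom of twisting by $\Pic^0(A)$ (as in Lemma~\ref{lem:global generation}) to arrange the $c_1$'s to be symmetric, reducing to Proposition~\ref{prop:BV}.
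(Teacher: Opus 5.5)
\textbf{There is a genuine gap at step (b), and it cannot be closed: the lemma is false without a symmetry hypothesis on $c_1(E)$ or $c_1(G)$.}

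Your strategy is the paper's. You use Whitney's formula $c_2(F)=c_2(E)+c_2(G)+D$ with $D=c_1(E)\cdot c_1(G)$, symmetrize under $\iota$, and conclude with Proposition~\ref{prop:BV} and Remark~\ref{rem:additivity_filtration}. Your use of Corollary~\ref{cor:Z-inv}, for the cases where $c_2(E)$ or $c_2(G)$ is the unknown class, fills a step the paper leaves implicit. The paper's proof disposes of (b) by asserting that $D+\iota(D)$ lies in the image of $\Pic(A)^\iota\otimes\Pic(A)^\iota$, and this is exactly what fails. With $\alpha=c_1(E)$ and $\beta=c_1(G)$,
\[
4\big(D+\iota(D)\big)=2(\alpha+\iota^*\alpha)(\beta+\iota^*\beta)+2(\alpha-\iota^*\alpha)(\beta-\iota^*\beta),
\]
and the last term is a product of two classes of $\Pic^0(A)$.

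Here is a counterexample. Let $L,M\in\Pic^0(A)$ and take the split triangle $L\to L\oplus M\to M\to L[1]$.
\begin{itemize}
\item[(1)] $c_2(L)=c_2(M)=0\in S_0(A)$.
\item[(2)] Set $w:=c_2(L\oplus M)=c_1(L)c_1(M)$. Since $\iota^*L\cong L^{-1}$, we get $\iota(w)=w$, so $w+\iota(w)=2w$ has degree $0$. Hence $w\in S_0(A)$ if and only if $2w=0$.
\item[(3)] Under Beauville's Fourier transform, $w$ goes, up to sign, to $[x+_{A^\vee}y]-[x]-[y]+[0]$ on $A^\vee$, for points $x,y$ attached to $L,M$.
\item[(4)] Such cycles generate the Albanese kernel of $A^\vee$. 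That kernel is non-zero by Mumford's theorem and torsion-free by Roitman's theorem.
\end{itemize}
Hence for suitable (indeed general) $L,M$ the class $w$ is non-torsion, and $c_2(L\oplus M)\notin S_{0+0}(A)$.

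\textbf{The right repair is your first fallback.} Assume that $c_1(E)$ or $c_1(G)$ is symmetric, say $\iota^*\alpha=\alpha$. Then $D+\iota(D)=\alpha\cdot(\beta+\iota^*\beta)$ holds integrally. This is a product of symmetric classes, so it lies in $2\Z o_A$ by Proposition~\ref{prop:BV}, and your step (a) finishes the argument. Symmetry of $c_1(F)$ alone is not enough: take $M=L^{-1}$ above with $L$ general.

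The added hypothesis does hold where the paper actually uses the lemma. That use is through Lemma~\ref{lemma:Mukairevisited}, for the triangle $Q[-1]\to F\to F^{\vee\vee}\to Q$ in the proof of Theorem~\ref{thm:abelianVoisin-slopestablesheaves}, where $c_1(Q[-1])=0$. Lemma~\ref{lemma:Mukairevisited} should carry the same hypothesis.

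Your other two fallbacks do not work.
\begin{itemize}
\item[(1)] Twisting by $P\in\Pic^0(A)$ changes $c_2$ by $(r-1)c_1\cdot c_1(P)+\binom{r}{2}c_1(P)^2$, which has exactly the same defect.
\item[(2)] Symmetric twists, as in Lemma~\ref{lem:global generation}, leave $c_1-\iota^*c_1$ unchanged.
\item[(3)] An index shift to $S_1(A)$ is unproved, and it would break the inequality needed in Lemma~\ref{lemma:Mukairevisited}.
\end{itemize}
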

\begin{proof} This is the analogue of \cite[Corollary 1.2]{SYZ20}.
From (\ref{eq:dist}) we deduce that 
$$c_2(F)= c_2(E) + c_2(G) + D,$$
where $D$ is spanned by intersections of divisor classes.
Taking the involution on both sides we get 
$$
c_2(F) + \iota (c_2(F)) = [c_2(E) + \iota(c_2(E))] + [c_2(G) + \iota(c_2(G))] + [D + \iota(D)]. 
$$
Note that $D + \iota(D)$ lies in the image of the intersection map restricted to symmetric classes 
$$
\Pic(A)^\iota \otimes Pic(A)^\iota \to \text{CH}_0(A),\ L\otimes M\mapsto L\cdot M.
$$
Hence the conclusion follows from Proposition~\ref{prop:BV} and Remark~\ref{rem:additivity_filtration}.
\end{proof}

\begin{lemma} \label{lemma:Mukairevisited}
Let
$$E\to F \to G \to E[1]$$
be a distinguished triangle in $D^b(A)$ such that $\Hom(E, G)=0$. If $c_2(E) \in S_{d(E)}(A)$ and $c_2(G) \in S_{d(G)-1}(A)$, then
$$c_2(F) \in S_{d(F)-1}(A).$$
\end{lemma}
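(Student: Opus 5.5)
By Lemma~\ref{lemma:c2oftriangles}, applied to the triangle $E\to F\to G\to E[1]$, we have
$$c_2(F)\in S_{d(E)+d(G)-1}(A).$$
Since $S_\bullet(A)$ is increasing (Lemma~\ref{lem:filt}), it is enough to prove the numerical inequality
$$d(E)+d(G)\le d(F).$$
This is the abelian analogue of Mukai's lemma, as used in \cite[Proposition 1.7]{SYZ20}. I will write $v(F)=v(E)+v(G)$ and $d(x):=\frac12 v(x)^2+1$, with the Mukai pairing normalised so that $\chi(x,x)=-v(x)^2$. Then
$$d(F)=\frac12 v(E)^2+\frac12 v(G)^2+(v(E),v(G))+1,$$
so the inequality becomes
$$(v(E),v(G))\ge 1,\quad\text{i.e.}\quad \chi(E,G)+\chi(G,E)\le -2.$$
I expect the main obstacle to be this bound, together with fixing the conventions for $d$ on arbitrary objects of $D^b(A)$ rather than on sheaves.

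To bound $\chi(E,G)+\chi(G,E)$, I will follow Mukai's argument. Serre duality on $A$ has trivial canonical bundle, so $\Ext^i(E,G)\cong\Ext^{2-i}(G,E)^\vee$. This gives
$$\chi(E,G)=\chi(G,E)=\hom(E,G)-\ext^1(E,G)+\hom(G,E).$$
Using $\Hom(E,G)=0$, I get
$$\chi(E,G)+\chi(G,E)=2\hom(G,E)-2\ext^1(E,G).$$
Mukai's lemma, \cite[Corollary 2.8]{Mukai87}, or its derived-category version \cite[Lemma 2.5]{BM14}, gives
$$\ext^1(E,E)+\ext^1(G,G)\le \ext^1(F,F)$$
under exactly the hypothesis $\Hom(E,G)=0$. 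Its proof is formal (long exact sequences and Serre duality), so it carries over verbatim to $A$. On an abelian surface, $\ext^1(x,x)=v(x)^2+2\hom(x,x)$ for objects with vanishing negative self-Exts; I will assume this case, as in the intended applications where $E,F,G$ are sheaves (possibly shifted). Writing $d(x)=\frac12\ext^1(x,x)$ for the stable pieces, the inequality $d(E)+d(G)\le d(F)$ then follows directly.

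Concretely, I will take $d$ to mean $\frac12\ext^1$, as in Theorem~\ref{thm:abelianVoisin-slopestablesheaves}, and apply Mukai's inequality. Plugging it into Lemma~\ref{lemma:c2oftriangles} gives
$$c_2(F)\in S_{d(E)+d(G)-1}(A)\subset S_{d(F)-1}(A).$$
The main point to check carefully is that Mukai's inequality holds in $D^b(A)$ with the hypothesis $\Hom(E,G)=0$ alone. I would reprove it by chasing the two long exact sequences obtained by applying $\Hom(-,E)$, $\Hom(-,G)$, $\Hom(F,-)$ to the triangle. The key steps are showing that the map $\Ext^1(F,F)\to\Ext^1(E,G)$ is surjective in the relevant sense and comparing dimensions, as in \cite{Mukai87}.
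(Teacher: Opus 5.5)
Your proposal is correct and, once you settle on $d(x)=\frac12\dim\Ext^1(x,x)$ (the convention of Theorem~\ref{thm:abelianVoisin-slopestablesheaves}), it is the paper's proof. Lemma~\ref{lemma:c2oftriangles} gives $c_2(F)\in S_{d(E)+d(G)-1}(A)$, Mukai's lemma (which the paper cites as \cite[Lemma 2.5]{BB17}) gives $d(E)+d(G)\le d(F)$ under $\Hom(E,G)=0$, and Lemma~\ref{lem:filt} concludes; the preliminary detour through $\frac12 v(x)^2+1$ and the bound on $\chi(E,G)+\chi(G,E)$ is not needed.
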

\begin{proof}
By Mukai's Lemma \cite[Lemma 2.5]{BB17} we have 
$$d(E) + d(G) \leq d(F).$$
Combining the assumption, Lemma~\ref{lemma:c2oftriangles} and Lemma~\ref{lem:filt}, we conclude that $$c_2(F) \in S_{d(E)+d(G)-1}(A) \subset S_{d(F)-1}(A).$$
\end{proof}

\begin{proof}[Proof of Theorem~\ref{thm:abelianVoisin-slopestablesheaves}]
Consider the short exact sequence
$$0 \to F \to F^{\vee \vee} \to Q \to 0,$$
where $F^{\vee \vee}$ is a vector bundle and $Q$ is a torsion sheaf supported on points. Since $c_1(F)=c_1(F^{\vee \vee})$ is symmetric, Conjecture~\ref{conj:abelianVoisin} implies that $c_2(F^{\vee \vee}) \in S_{d(F^{\vee \vee})-1}$. On the other hand, if $d$ is the lenght of the support of $Q$, then $d(Q) \geq d$. Hence
$c_2(Q) \in S_d(A) \subset S_{d(Q)}(A)$.
Applying Lemma~\ref{lemma:Mukairevisited} to the distinguished triangle
$$Q[-1] \to F \to F^{\vee \vee} \to Q,$$
which satisfies $\Hom(Q[-1], F^{\vee \vee})=0$ since $F^{\vee \vee}$ is locally free and $Q$ is supported in dimension $0$, we deduce the statement.
\end{proof}

We point out the following corollary of Theorem~\ref{thm:abelianVoisin-slopestablesheaves}.

\begin{corollary}
Let $E$ be an iterated extension of a slope stable torsion free sheaf $F$ with $c_1(F)$ symmetric. If Conjecture~\ref{conj:abelianVoisin} holds, then
$$c_2(E) \in S_{d(E)-1}(A).$$
\end{corollary}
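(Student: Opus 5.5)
Since $E$ is an iterated extension of $F$, there is a filtration $0=E_0\subset E_1\subset\cdots\subset E_n=E$ with $E_i/E_{i-1}\cong F$ for all $i$. The plan is an induction on $n$, feeding at each step the short exact sequences $0\to E_{i-1}\to E_i\to F\to 0$ into Lemma~\ref{lemma:c2oftriangles}. The bound comes from two inputs. First, Theorem~\ref{thm:abelianVoisin-slopestablesheaves} gives $c_2(F)\in S_{d(F)-1}(A)$. Second, we need a lower bound on $d(E_i)$ in terms of $i$ and $d(F)$. The condition $\Hom(E_{i-1},F)=0$ needed in Lemma~\ref{lemma:Mukairevisited} fails, since $F$ is a quotient of $E_{i-1}$. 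So we do not use that lemma and instead compare Mukai pairings directly.

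Write $v=v(F)$. Then $v(E_i)=iv$, and $d(E_i)=\tfrac12 (iv)^2+1=i^2(d(F)-1)+1$, because $d(w)=\tfrac12 w^2+1$ with the Mukai pairing. Set $e:=d(F)-1\geq 0$; the inequality $d(F)\geq1$ was recorded at the start of the section on simple vector bundles. We aim to prove $c_2(E_i)\in S_{ie}(A)$. The base case $i=1$ is exactly Theorem~\ref{thm:abelianVoisin-slopestablesheaves}. For the inductive step we apply Lemma~\ref{lemma:c2oftriangles} to $E_{i-1}\to E_i\to F\to E_{i-1}[1]$ with $c_2(E_{i-1})\in S_{(i-1)e}(A)$ and $c_2(F)\in S_e(A)$, and obtain $c_2(E_i)\in S_{ie}(A)$. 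Lemma~\ref{lemma:c2oftriangles} only uses that $c_2$ is additive up to intersections of divisors and Proposition~\ref{prop:BV}. It applies because all first Chern classes involved are multiples of the symmetric class $c_1(F)$, so the correction term $D$ has $D+\iota(D)$ in the image of symmetric classes.

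Finally we compare $ne$ with $d(E)-1=n^2e$. Since $e\geq 0$ and $n\geq1$, we have $ne\leq n^2e$, and Lemma~\ref{lem:filt} gives $S_{ne}(A)\subset S_{n^2e}(A)=S_{d(E)-1}(A)$. This is the required statement, with $d(E)$ understood as $\tfrac12 v(E)^2+1$, which agrees with the $\tfrac12\dim\Ext^1$ normalization for stable objects.

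The main obstacle is one of conventions rather than mathematics. For a non-simple $E$, the quantity $\tfrac12\dim\Ext^1(E,E)$ may differ from $\tfrac12 v(E)^2+1$, so the statement has to be read with $d(E)$ defined through the Mukai vector. If instead $\tfrac12\dim \Ext^1(E,E)$ is intended, I would use $\dim\Ext^1(E,E)\geq v(E)^2+2$, which follows from $\chi(E,E)=-v(E)^2$ together with $\hom(E,E)=\mathrm{ext}^2(E,E)\geq 1$. This inequality points the right way, since the filtration is increasing, and the same conclusion follows.
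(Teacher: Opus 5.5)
Your proof is correct, but it takes a different route from the paper's. The paper does not induct along the filtration. It applies the Whitney formula once, $c_2(E)=m\,c_2(F)+\binom{m}{2}c_1(F)^2$ with $m=n$ the length of the filtration, and puts the correction term in $S_0(A)$ by Proposition~\ref{prop:BV}. It then invokes Corollary~\ref{cor:Z-inv} ($\Z\cdot S_d(A)\subset S_d(A)$) to keep $m\,c_2(F)$ in $S_{d(F)-1}(A)$. This gives the length-independent bound $c_2(E)\in S_{d(F)-1}(A)$, after which only the weak inequality $d(E)\geq d(F)$ is needed. That inequality comes from $2d(E)=m^2v(F)^2+2\dim\Hom(E,E)\geq v(F)^2+2$, so the paper uses $d(E)=\tfrac{1}{2}\dim\Ext^1(E,E)$, which is your second reading. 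You instead use only the additivity $S_i(A)+S_j(A)\subset S_{i+j}(A)$ of Remark~\ref{rem:additivity_filtration}, packaged in Lemma~\ref{lemma:c2oftriangles}. This loses a factor of $n$, giving only $c_2(E)\in S_{n(d(F)-1)}(A)$, and you absorb the loss with the quadratic scaling $d(E)-1\geq \tfrac{1}{2}v(E)^2=n^2(d(F)-1)$. What your route buys is independence from Corollary~\ref{cor:Z-inv}, and hence from the curve-theoretic characterization in Lemma~\ref{lem:char-genus} on which that corollary rests. Everything you use comes from the definition of $S_\bullet(A)$, Lemma~\ref{lem:filt} and Proposition~\ref{prop:BV}. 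The paper's route buys the sharper intermediate statement $c_2(E)\in S_{d(F)-1}(A)$. Both arguments need $v(F)^2\geq 0$, your $e\geq 0$. Your check that every first Chern class in the triangles is a multiple of the symmetric class $c_1(F)$ is the right thing to verify. It is what makes the correction term $(i-1)c_1(F)^2$ a product of symmetric classes. For an arbitrary triangle the first Chern classes need not be symmetric, and Proposition~\ref{prop:BV} alone would not control $D+\iota(D)$.
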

\begin{proof}
This is the analogue of \cite[Proposition 1.8]{SYZ20}. Note that $c_1(E)=mc_1(F)$ is symmetric and $c_2(E)=mc_2(F)+D$ for some positive integer $m$, where $D$ is the intersection of symmetric divisors. By Proposition~\ref{prop:BV} it follows that $D \in S_0(A)$. Thus by Theorem~\ref{thm:abelianVoisin-slopestablesheaves} and Corollary~\ref{cor:Z-inv} it follows that $c_2(E) \in S_{d(F)-1}(A)$. The conclusion follows from
$$2d(E)=v(E)^2+2\dim\Hom(E,E)=m^2v(F)^2+2\dim\Hom(E,E) \geq v(F)^2+2=2d(F).$$
\end{proof}

\begin{remark}
We expect that Theorem~\ref{thm:abelianVoisin-slopestablesheaves} holds for objects in $D^b(A)$ with symmetric first Chern class. However, the latter assumption does not allow us to use the induction argument as in \cite{SYZ20}.      
\end{remark}

\begin{remark}
Let $T$ be a torsion sheaf supported on a symmetric curve $\tilde{C}$. Then by Lemma~\ref{lem:char-genus} we have $c_2(T) \in S_g(A)$, where $g$ is the genus of $\tilde{C}/\iota$. On the other hand, the genus $\tilde{g}$ of $\tilde{C}$ satisfies
$$\tilde{g}=1+\tilde{C}^2 \leq \frac{1}{2}v(T)^2+ \dim\Hom(T,T) = d(T)$$
and
$$\tilde{g}=2g-1+\frac{B}{2}.$$
Thus
$$d(T)-1 \geq \tilde{g}-1= 2g-2+\frac{B}{2} \geq g.$$
The last equality follows from the fact that if $g< 2-\frac{B}{2}$, then $\tilde{g} \leq 1$ which is impossible on $A$. 
It follows that $c_2(T) \in S_{d(T)-1}(A)$. 
\end{remark}



\section{Coisotropic subvarieties} \label{sec:coisotropicsub}

Let $A$ be an abelian surface with a fixed polarization $H$. Let $v=(r,\ell, s) \in H^0(A, \Z) \oplus NS(A) \oplus H^4(A, \Z)$ be a primitive and positive Mukai vector, i.e.\ either $r>0$, or if $r=0$, $\ell$ is effective and $s \neq 0$, or $r=\ell=0$ and $s<0$. We denote by $M_H(v)$ the moduli space of $H$-semistable sheaves on $A$ with Chern class $v$, and let $M_H(v)^\text{st}$ be its stable locus. If $H$ is $v$-generic, then $M_H(v)=M_H(v)^\text{st}$. Let $\Phi \colon D^b(A)\to D^b(A^\vee)$ be the Fourier-Mukai transform associated to the Poincar\'e line bundle $\mathcal{P}$ on $A\times A^\vee$, and consider the morphism
$$alb_v\,: M_H(v) \to A\times A^\vee,\,\,F\mapsto (det(\Phi(F))\otimes det (\Phi(F_0^\vee)) ,det(F)\otimes det (F_0^\vee)),$$ 
where $F_0 \in M_H(v)$ is fixed. Denote by $K_H(v)$ a fiber of $alb_v$.

\begin{theorem}[\cite{Yo01}, Theorems 0.1 and 0.2]
Assume that $v^2 \geq 6$ and $H$ is $v$-generic. Then $alb_v$ is the Albanese morphism and $K_H(v)$ is a projective hyperk\"aheler manifold of dimension $v^2-2$ deformation equivalent to a generalized Kummer variety. 
\end{theorem}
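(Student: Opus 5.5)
This is Yoshioka's theorem, so the proof consists of citing \cite{Yo01}. If I had to reconstruct it, I would argue in two stages: first reduce to a Hilbert scheme of points, then identify the Albanese fibre there.

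\emph{Reduction.} Yoshioka's strategy is to deform and transform the pair $(A,v)$ while keeping track of $M_H(v)$. The tools are Fourier--Mukai transforms (in particular $\Phi$), twists by line bundles, and deformations of $A$ along which $v$ stays algebraic. A wall-crossing or chamber argument shows that for $v$-generic $H$ the birational type does not depend on the polarization. Using these, I would reduce from a primitive positive $v$ with $v^2\ge 6$ to $v=(1,0,-n)$, with $n=v^2/2$. For that vector,
\[
M_H(v)\cong \operatorname{Pic}^0(A)\times A^{[n]},
\]
since every sheaf is $\sI_Z\otimes L$ with $L\in\operatorname{Pic}^0(A)$. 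Along the way I have to check that $alb_v$ corresponds, up to translation and isomorphism, to the morphism $(\text{sum of }Z,\ L)$. This is a bookkeeping step: compute $\det$ and $\det\Phi$ of $\sI_Z\otimes L$.

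\emph{Identification.} For $v=(1,0,-n)$ the fibre of $alb_v$ is the fibre of the summation map $A^{[n]}\to A$. By Beauville this is the generalized Kummer $K_{n-1}(A)$, which is hyperk\"ahler of dimension $2n-2=v^2-2$. Its simple connectivity and $h^{2,0}=1$ then force $A\times A^\vee$ to be the Albanese of $M_H(v)$, which makes $alb_v$ the Albanese map. For general $v$ the statement follows by deformation invariance in the smooth projective family of moduli spaces over the deformation base of $(A,v,H)$.

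\emph{Main obstacle.} The hard part is showing that the reduction steps preserve smoothness and identify the Albanese maps compatibly. This is especially delicate in rank $0$ and in the non-fine cases, where the universal family exists only twisted. I would rely on Yoshioka's explicit analysis here rather than redo it.
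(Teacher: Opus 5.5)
Your proposal agrees with the paper: the paper gives no argument of its own here and simply quotes Yoshioka's Theorems 0.1 and 0.2 from \cite{Yo01}. Your reconstruction is a fair outline of Yoshioka's own strategy: you use Fourier--Mukai transforms, twists and deformations of $(A,v,H)$ to reduce to $v=(1,0,-n)$, where $M_H(v)\cong \Pic^0(A)\times A^{[n]}$ and the fibre of $alb_v$ is Beauville's $K_{n-1}(A)$, and you rightly defer the compatibility of $alb_v$ with the reduction steps to his analysis.
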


An important consequence of Conjecture~\ref{conj:abelianVoisin} is the existence of algebraically coisotropic subvarieties of the form
$$
\xymatrix{
Y \ar@{^{(}->}[r] \ar@{-->}^p[d]& K_H(v) \\
B
}
$$
where $Y$ has codimension $i$ and the general fibers of $p$ are constant cycle subvarieties of $K_H(v)$ of dimension $i$.

Let us first make the following observations.

\begin{lemma} \label{lemma:c2sum0}
Assume that $v^2 \geq 6$ and $H$ is $v$-generic. Let $\alpha\,:M_H(v) \to A$ be the map sending a sheaf $F$ to the sum of $c_2(F)$. Then there exists a choice of $F_0\in M_H(v)$ such that $K_H(v)$ is contained in a fiber of $\alpha$.
\end{lemma}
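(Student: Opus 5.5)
We plan to show that the composition $\alpha$ is constant on each fiber of $alb_v$, by expressing $\alpha$ through the components of $alb_v$. The key ingredient is that, for a family of sheaves, the sum map applied to $c_2$ behaves like an additive (Albanese-type) invariant, so it should factor through the Albanese of $M_H(v)$.

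First step: the map $\alpha$ is a morphism $M_H(v)\to A$. Using a (quasi-)universal family $\mathcal F$ on $M_H(v)\times A$, the assignment $F\mapsto c_2(F)\in \textrm{CH}_0(A)$ is given by a correspondence $\Gamma=c_2(\mathcal F)$ (or $\frac{1}{\rho}$ times the $c_2$ of a quasi-universal family, corrected by $c_1$ terms). Composed with the summation map $\textrm{CH}_0(A)_{\hom}\to A$ (Albanese), $\alpha$ is, after subtracting $\alpha(F_0)$, a morphism of varieties. By the universal property of the Albanese morphism, which is $alb_v$ when $v^2\ge 6$ by Yoshioka's theorem above, $\alpha-\alpha(F_0)$ factors as $h\circ alb_v$ for a homomorphism $h\colon A\times A^\vee\to A$ followed by a translation. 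In particular $\alpha$ is constant on every fiber of $alb_v$, and $K_H(v)=alb_v^{-1}(0)$ (for the normalization using $F_0$) lies in a single fiber of $\alpha$. Choosing $F_0$ then adjusts which value that fiber takes; the lemma asks only for containment in a fiber, so any $F_0$ works, but if one wants the fiber over $o_A$ (as will be used later, so that $c_2$ sums to zero, cf. Proposition \ref{prop:rational orbit filtration}) we will pick $F_0$ appropriately: translating $F_0$ by $t_a^*$ changes the sum of $c_2$ by $k\cdot a$ plus a term coming from $c_1$, and tensoring by $P\in\Pic^0$ changes it by $c_1(F)\cdot c_1(P)$ summed; since $k=c_2^{top}\neq 0$ or $c_1\neq 0$ generically, these moves cover $A$ up to torsion, so we can arrange $\alpha(F_0)=o_A$ up to a finite ambiguity which we absorb by choosing among the finitely many options.

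The main obstacle is making the first step rigorous: a universal family may not exist, so $\alpha$ is a priori only defined pointwise. We would handle this by working étale locally (or on a finite cover $\widetilde M\to M_H(v)$ carrying a universal family), showing $\alpha$ is a morphism there, and using that it is constant on fibers of the cover to descend. Alternatively, one can avoid the Albanese universal property: note $\alpha$ is equivariant for the action of $A\times A^\vee$ by translation and twist, and $K_H(v)$ is simply connected (hyperkähler), so any morphism $K_H(v)\to A$ is constant since $A$ contains no rational curves and $H^1(K_H(v))=0$ forces $\textrm{Alb}(K_H(v))=0$. This second argument is cleaner: restricting $\alpha$ to $K_H(v)$ gives a morphism from a variety with trivial Albanese to an abelian variety, hence constant. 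We would present this as the main proof, with the existence of the morphism (via a quasi-universal family locally) as the only technical point.
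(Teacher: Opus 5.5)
Your proposal is correct and is essentially the paper's argument. The constancy of $\alpha$ on the fibers of $alb_v$ comes from the universal property of the Albanese map; your variant, that $\alpha|_{K_H(v)}$ is constant because $H^1(K_H(v))=0$, is the same point seen on the fiber, and you rightly note that $\alpha$ being a morphism needs an \'etale-local universal family, which the paper takes for granted. The normalization $K_H(v)\subset\alpha^{-1}(o_A)$ comes from equivariance, and here your use of $\Pic^0$-twists besides translations is genuinely needed when $\deg c_2=0$ (e.g.\ $v=(0,\ell,\ell^2/2)$ with $\ell^2\geq 6$), a case the paper's translation-only argument misses.

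Two small fixes. First, $c_2(t_a^*F)=t_a^*c_2(F)$, so translation shifts the sum by $\pm ka$ with no $c_1$ term. Second, the homomorphism $A\times A^\vee\to A$ induced by translations and twists is surjective whenever $v^2\neq 0$: if $k=0$ then $v^2=(1-r)\ell^2$ forces $r\neq 1$ and $\ell^2\neq 0$, which makes the twist part surjective. So there is no torsion ambiguity to absorb.
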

\begin{proof}
By fixing a Mukai vector we fix the integer $s:=deg(c_2(\sF))$ for all sheaves $F\in M_H(v)$. We therefore have the actions of $A$ on $M_H(v)$ and on $A$ itself which are given by
$$ A\times M_H(v) \to M_H(v),\,\,(a,F) \mapsto t_a(F)$$
and
$$ A\times A\to A,\,(a,b)\mapsto (b+_A sa).$$
The morphism $\alpha$ is equivariant with respect to this action, hence $\alpha$ is an isotrivial fibration with isomorphic fibers. 
By universality of the Albanese map, all fibers of $alb_v$ are contained in fibers of $\alpha$ and, as the Albanese fibration is isotrivial by an analogous argument, we can choose a sheaf $F_0$ to ensure that $K_H(v)\subset \alpha^{-1}(0)$.
\end{proof}

\begin{proposition} \label{prop:MZabelian}
Let $E$, $F$ be two objects in $K_H(v)$. Then $[E]=[F]$ in $\emph{CH}_0(K_H(v))$ if and only if $c_2(E)=c_2(F)$.     
\end{proposition}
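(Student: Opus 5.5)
This is the abelian analogue of the Marian--Zhao theorem for K3 surfaces. I would prove it the same way: through the Chow-theoretic behaviour of universal families, and by reducing to the Hilbert scheme $A^{[n]}$ via a derived equivalence.

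For the direction $[E]=[F]\Rightarrow c_2(E)=c_2(F)$, work étale locally or with a quasi-universal family $\mathcal{E}$ on $K_H(v)\times A$. The assignment $x\mapsto c_2(\mathcal{E}_x)$ (normalised by the rank of the quasi-universal family) is the action of a correspondence $\Gamma\in \textrm{CH}^*(K_H(v)\times A)_{\mathbb Q}$ on $\textrm{CH}_0(K_H(v))$. Hence rationally equivalent points give rationally equivalent $c_2$ modulo torsion. Torsion is harmless: $\textrm{CH}_0(A)_{\hom}$ is divisible with torsion mapping injectively to $A$ by Roitman. Both $c_2$'s have the same sum by Lemma \ref{lemma:c2sum0}, so the torsion discrepancy vanishes.

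The converse is the main content. First reduce to $v=(1,0,-n)$, so that $K_H(v)=K_{n-1}(A)\subset A^{[n]}$ is a generalized Kummer variety. Yoshioka's results give a chain of Fourier--Mukai transforms and changes of polarization identifying $M_H(v)$ with $M_{H'}(1,0,-n)\times A^\vee$ (or $A^{[n]}\times A^\vee$) compatibly with the Albanese fibres. The effect of such an equivalence $\Phi$ on $c_2$ is governed by its kernel: $v(\Phi(E))=v(E)\cdot v(\mathcal{K})$ at the level of Chow groups, so $c_2(\Phi(E))$ is an affine function of $c_2(E)$ plus terms coming from $c_1(E)$ paired with divisor classes. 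Since $c_1$ is constant on $K_H(v)$, equality of $c_2$ is preserved. This bookkeeping of Chern characters under the Fourier--Mukai transform (using Beauville's decomposition $\textrm{CH}_0(A)=\bigoplus \textrm{CH}_0(A)_{(s)}$, in which Fourier transforms act by graded pieces) is routine but lengthy.

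For $K_{n-1}(A)\subset A^{[n]}$, a point $\xi$ has $c_2(\mathcal I_\xi)=[\xi]\in \textrm{CH}_0(A)$. The task is to show that $[\xi]=[\xi']$ in $\textrm{CH}_0(A)$ implies $[\xi]=[\xi']$ in $\textrm{CH}_0(K_{n-1}(A))$. Here I would use de Cataldo--Migliorini's description of the motive of $A^{[n]}$ as a sum of motives of products of symmetric powers $A^{(\lambda)}$, together with its compatibility with the Kummer fibre established by Lin and by Fu--Tian--Vial. These express $\textrm{CH}_0(K_{n-1}(A))$ in terms of pieces of $\textrm{CH}_0(A^{n})$ determined by the $0$-cycle $[\xi]$ and its diagonal pushforwards. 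If $[\xi]=[\xi']$ in $\textrm{CH}_0(A)$, then all induced classes in $\textrm{CH}_0(A^{(\lambda)})$, being products of the same cycle, coincide, so the points agree in $\textrm{CH}_0(K_{n-1}(A))_{\mathbb Q}$. Torsion-freeness of $\textrm{CH}_0$ of a hyperkähler manifold, again by Roitman since its Albanese is trivial, then gives integral equality.

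I expect the main obstacle to be this last step, namely making the motivic decomposition precise for the Kummer fibre rather than for $A^{[n]}$. Equivalently, one must check that the correspondence $\textrm{CH}_0(A)\to \textrm{CH}_0(K_{n-1}(A))$, $[\xi]\mapsto[\xi]$, is well defined on cycles lying in the kernel of the summation map. I would try to quote Lin's result identifying $\textrm{CH}_0(K_{n-1}(A))$ with the appropriate symmetric part of $\textrm{CH}_0(A^{n})$ restricted to sum-zero cycles. Failing that, I would mimic Marian--Zhao's use of Voisin's argument that the class of a point in the moduli space is determined by $\textrm{ch}(\mathcal E_x)$ through the Mukai correspondence.
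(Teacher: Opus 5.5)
Your ``only if'' direction is fine. The quasi-universal family gives a correspondence, so $[E]=[F]$ forces $c_2(E)-c_2(F)$ to be torsion. Roitman, together with the constancy of the sum of $c_2$ on $K_H(v)$, then kills it.

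The converse has a genuine gap at its first step. Yoshioka's comparison of $M_H(v)$ with $\widehat{A}\times A^{[n]}$ is a \emph{deformation} equivalence. His Fourier--Mukai identifications are made on special deformations of $(A,H)$, and these say nothing about $\textrm{CH}_0$ of the given surface. On a fixed $A$, a derived equivalence sending $v$ to $(1,0,-n)$ (on $A$ or on a Fourier--Mukai partner) would pull back the isotropic class $(0,0,-1)$ to an isotropic $w\in H^*_{alg}(A,\Z)$ with $\langle v,w\rangle=\pm1$. Such a $w$ need not exist. If $\textrm{NS}(A)=\Z H$ with $H^2=2t\geq 6$ and $v=(0,H,2)$, then $\langle v,(r',bH,s')\rangle=2tb-2r'$ is always even. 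Changes of polarization only give birational maps, and these do not transport a statement about \emph{all} pairs of points without further work.

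The Kummer case is also left open, as you acknowledge. For $A^{[n]}$ your argument works: only $(1^n)$ contributes to $\textrm{CH}_0$, and the symmetrised point in $\textrm{CH}_0(A^n)$ is a universal expression in exterior powers and diagonal push-forwards of $[\xi]\in\textrm{CH}_0(A)$. This comes from M\"obius inversion over set partitions; it is not literally a product. But $A^{[n]}=(K_{n-1}(A)\times A)/A[n]$, so equality in $\textrm{CH}_0(A^{[n]})$ only yields $\sum_{\tau\in A[n]}[t_\tau\xi]=\sum_{\tau\in A[n]}[t_\tau\xi']$ in $\textrm{CH}_0(K_{n-1}(A))$. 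To conclude you need either a motivic decomposition of the Kummer variety itself, in terms of $\ker(A^n\to A)$, or triviality of the $A[n]$-action on $\textrm{CH}_0(K_{n-1}(A))$. You supply neither.

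The missing idea is the one you list only as a fallback, and it is what the paper does: run Marian--Zhao's argument directly on $M=M_H(v)$. The key input is Markman's formula $[\Delta_M]=c_{2d}\bigl(-R\mathcal{H}om_\pi(\mathcal E,\mathcal E)\bigr)$, with $2d=\dim M$. This holds in $\textrm{CH}^*(M\times M)$, not only in cohomology. Off the diagonal, $\Hom$ and $\Ext^2$ between distinct stable sheaves vanish, so the class is that of a rank $2d-2$ vector bundle. By localization, $c_{2d}$ is then an integer multiple of $[\Delta_M]$, and the integer is $1$ by the cohomological computation. Restricting to $\{E\}\times M$ and applying Grothendieck--Riemann--Roch shows that $[E]\in\textrm{CH}_0(M)$ depends only on $ch(E)\in\textrm{CH}^*(A)$. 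Since $\det E$ is fixed on $K_H(v)$, it depends only on $c_2(E)$. This covers every $v$ at once, with no Fourier--Mukai reduction. The paper then passes from $M_H(v)$ to the Albanese fibre $K_H(v)$, observing that rationally equivalent points lie in fibres differing by torsion points (see also \cite[Theorem 6.10]{CLZ26}).
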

\begin{proof}
The proof given in \cite[ Theorem]{MZ20} works as well for moduli spaces on abelian surfaces. To pass to the fiber over zero of the Albanese morphism we observe that two rationally equivalent sheaves in the moduli space must lie in fibers differing by torsion points.  See also \cite[Theorem 6.10]{CLZ26}.    
\end{proof}

We are now ready to prove the main result of this section.

\begin{proof}[Proof of Theorem \ref{thm:coiso}]
We argue as in \cite[Theorem 0.5]{SYZ20}. Set $M:=M_H(v)$ and $d:=d(v)-1$.  We consider
the Hilbert scheme $A^{[d]}$ of $d$-points on $A$ and the following correspondence 
$$
     \overline R:=\{(\sE,\xi)\in M\times A^{[d]}: c_2(\sE)+i(c_2(\sE))\equiv \supp(\xi)+\iota(\supp(\xi))+2m\cdot o_A, \exists m\in \Z\}.
    $$
Set $K:= K_H(v)$ and $\textrm{Kum}:=\textrm{Kum}_{d-1}(A)$.
The main point is that by Lemma \ref{lemma:c2sum0} and \cite[Corollary 2.4]{Bl76} the correspondence induces the following correspondence    
    $$
     R:=\{(E,\xi)\in K\times \textrm{Kum}: c_2(E)\equiv \supp(\xi)+m\cdot o_A, \exists m\in \Z\}. 
    $$
    We denote by   $p_K$ and $p_{\textrm{Kum}}$ the projections of $R$ on the two factors. We start by analyzing the fibers of $p_K$ and $p_{\textrm{Kum}}$. By definition of $R$ it follows that for any $(E,\xi),(E,\xi')\in R$ we have 
        \begin{equation}\label{eq:p_M}
            \supp(\xi)\equiv \supp(\xi').
        \end{equation}
Analogously, if $(E,\xi),(E',\xi)\in R$, then $c_2(E) \equiv c_2(E')$. Proposition~\ref{prop:MZabelian} implies that $[E]\equiv [E'] \in \textrm{CH}_0(K)$.


Now by Theorem~\ref{thm:abelianVoisin-slopestablesheaves} the projection $p_K$ is dominant. Moreover, Conjecture~\ref{conj:abelianVoisin} impies that the projection $p_{\textrm{Kum}}$ is dominant, using that for every $\ell \in \textrm{NS}(A)$ we can find a symmetric line bundle corresponding to it.

Fix one component $R_0$ of $R$ dominating both factors. We restrict to the non-empty open subsets $U\subset K$ (respectively  $V\subset \textrm{Kum}$) over which the projection $p_K$ (resp. $p_{\textrm{Kum}}$) is surjective and finite. 

Consider a curve $C$ such that $\forall p\in C$ we have $p+\iota(p)\equiv 2o_A$. Let $V_{i,1}(C)$ be the subvariety of $\mathrm{Kum}$ associated with $C$ (using the construction \cite[Section 2.2]{Lin16}, see also the proof of Lemma~\ref{lem:rational orbit}). We set 
$$Z:=V_{i,1}(C)\times_{\mathrm{Kum}} R_0$$
and $\psi:Z\dashrightarrow R_0$ the natural induced map.
By Lemma \ref{lem:2.3}, as we let the curve $C$ vary among symmetric hyperelliptic curves, we may assume that $\psi(Z)\cap p_K^{-1}(U)\not=\emptyset$. Then the subset
$$
p_K(\psi(Z))
$$
parametrizes objects $\sE\in K$ such that $c_2(\sE)$ is constant in $\textrm{CH}_0(A)$.
\end{proof}

We finish this section with a comparison with \cite[Conjecture 7.2]{CLZ26}
\begin{proposition}
    Let $A$ be an abelian surface. Assume Conjecture \ref{conj:orbit dimension}. Then the following are equivalent:
    \begin{itemize}
        \item[(1)] Conjecture \ref{conj:abelianVoisin} holds.
        \item[(2)] Conjecture $7.2$ of \cite{CLZ26} holds.
    \end{itemize}
\end{proposition}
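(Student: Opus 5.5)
We do not know the exact wording of \cite[Conjecture 7.2]{CLZ26}. We assume it is the Kummer-type analogue of Voisin's conjecture. For a stable sheaf $E$ in $K_H(v)$, whose $c_2$ lies in the kernel of the summation map by Lemma \ref{lemma:c2sum0}, it predicts that $c_2(E)$ lies in the piece of the CLZ filtration on $\textrm{CH}_0$ of the kernel of the summation map prescribed by $d(v)$. That filtration is defined through rational orbits: a cycle $z\in A^{(k)}$ with sum zero belongs to the $j$-th piece iff it is rationally equivalent to $\eta+(k-j)o_A$ with $\eta$ effective of degree $j$, or equivalently iff $\dim O_z\geq k-j-1$. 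The plan is to prove both implications by translating between the two filtrations on cycles with sum zero, using Theorem \ref{thm:orbits}, Proposition \ref{prop:rational orbit filtration} and Conjecture \ref{conj:orbit dimension}.

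First we reduce to cycles of sum zero. In (1) the first Chern class is symmetric. Twisting by a symmetric line bundle does not change membership in $S_j(A)$ by Lemma \ref{lem:global generation}, and translating by a point moves the sum of $c_2$. Arguing as in Lemma \ref{lemma:c2sum0}, we may therefore assume $F$ lies in some $K_H(v)$, so that the sum of $c_2(F)$ is zero. By \cite[Corollary 2.4]{Bl76} this gives $c_2(F)\equiv\iota(c_2(F))$.

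Next, for $z\in A^{(k)}$ of sum zero and $k>d$, we would show that the following three conditions are equivalent:
\begin{itemize}
\item[(a)] $z\in S^k_d(A)$;
\item[(b)] $z\equiv \eta+(k-d)o_A$ with $\eta$ effective of degree $d$;
\item[(c)] $\dim O_z\geq k-d-1$.
\end{itemize}
The implication (a)$\Rightarrow$(b) is exactly the computation in the proof of Proposition \ref{prop:rational orbit filtration}, where $\eta$ is translated by a torsion point so that it also sums to zero. The implication (b)$\Rightarrow$(a) is immediate, since $\eta+\iota(\eta)$ is effective of degree $2d$. The implication (b)$\Rightarrow$(c) follows from Lemma \ref{lem:rational orbit}. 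For (c)$\Rightarrow$(a), Conjecture \ref{conj:orbit dimension} gives $\dim O^\iota_z=\dim O_z+1\geq k-d$, and Theorem \ref{thm:orbits} then yields $z\in S^k_d(A)$. This last step is the only place where the hypothesis of the proposition enters.

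With the dictionary (a)$\iff$(b)$\iff$(c) established, the two conjectures state the same condition for $d=d(v)-1$ once the indices are matched. One must check that the CLZ conjecture uses $d(v)-1=\frac{v^2}{2}$ as its index, or the corresponding orbit dimension $k-d(v)$. We then pass between sheaves in $K_H(v)$ and simple vector bundles with symmetric $c_1$.
\begin{itemize}
\item[] \textbf{(1)$\Rightarrow$(2).} Apply Theorem \ref{thm:abelianVoisin-slopestablesheaves} to stable sheaves in $K_H(v)$ after a symmetric twist, using $v$-genericity.
\item[] \textbf{(2)$\Rightarrow$(1).} Deform a simple bundle $F$ into a moduli space for a $v$-generic polarization. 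Alternatively, invoke the CLZ statement directly if it is formulated for simple objects.
\end{itemize}

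We expect the main obstacle to be this last matching step rather than the cycle-theoretic dictionary. Simple bundles need not be stable, and the precise index conventions in \cite{CLZ26} must agree with ours. If the CLZ conjecture concerns only stable objects, then (2)$\Rightarrow$(1) may require an extra argument that simple bundles deform to stable ones, or an analogue of Section \ref{sec:simpletostablesheaves} run in reverse.
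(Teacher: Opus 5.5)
Your cycle-theoretic core is the paper's. For $F$ with symmetric $c_1$ and $c_2(F)$ of sum zero, \cite[Corollary 2.4]{Bl76} gives $c_2(F)\equiv\iota(c_2(F))$, so $c_2(F)$ and $c_2(F)_{(2)}$ differ by a multiple of $o_A$. Theorem~\ref{thm:orbits} together with Conjecture~\ref{conj:orbit dimension} then turns $c_2(F)\in S^k_{d(v)-1}(A)$ into an orbit-dimension condition. What is missing is the ingredient the paper uses to reach \cite[Conjecture 7.2]{CLZ26}. The paper compares on the rational orbit of the point $[F]\in K_H(v)$, and ties this to $c_2(F)$ through Proposition~\ref{prop:MZabelian}: $[E]=[F]$ in $\mathrm{CH}_0(K_H(v))$ iff $c_2(E)=c_2(F)$. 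Your dictionary (a)--(c) lives entirely in $A^{(k)}$ and never passes to $K_H(v)$. So it only compares $S_\bullet(A)$ with your guessed filtration on $A$, not with the orbit statement on $K_H(v)$ that the paper's proof matches against.

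Two of your steps also fail as written.

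First, the reduction by translation does not work. Since $\iota\circ t_a=t_{-a}\circ\iota$, translating destroys the symmetry of $\det F$ unless $2a\in K(\det F)$. It also does not preserve $S_j(A)$: $o_A\in S_0(A)$, but a general point $a$ has $a+\iota(a)\not\equiv 2o_A$. Twisting by symmetric line bundles cannot move the sum of $c_2$ either, by Proposition~\ref{prop:BV}. The paper makes no such reduction. It restricts the comparison to sheaves of $M_H(v)$ in an Albanese fibre with symmetric $c_1$ and $c_2$ of sum zero. It also does not attempt the passage between simple bundles and such sheaves, which you leave open in (2)$\Rightarrow$(1).

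Second, the detour (a)$\Rightarrow$(b)$\Rightarrow$(c) is unreliable; the normalization you import from the proof of Proposition~\ref{prop:rational orbit filtration} is not justified. The relation $2z\equiv\eta+\iota(\eta)+2(k-d)o_A$ puts no constraint on the sum of $\eta$. Translating $\eta$ by a non-torsion point changes the class of $\eta+\iota(\eta)$ in general, so $\eta$ cannot be normalized to have sum zero. For example, on $A=E\times E'$ take $z=x+\iota(x)+o_A$ with $x=(e,e')$ very general. Then $z\in S^3_1(A)$, witnessed by $\eta=(2e,e')$, yet $z\not\equiv 3o_A$. So (b) cannot serve as the matching filtration. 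Instead, get (a)$\Rightarrow$(c) as the paper does, from Theorem~\ref{thm:orbits} and Conjecture~\ref{conj:orbit dimension}. The conjecture is then used in both directions, not only in (c)$\Rightarrow$(a).
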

\begin{proof}
    Let $F$ be a sheaf in a moduli space $M_H(v)$. Let us suppose in addition that $F$ lies in a fiber of the Albanese map such that $c_1(F)$ is symmetric and $c_2(F)$ sums to zero. Then, by \cite[Corollary 2.4]{Bl76}, we have that $c_2(F)\equiv \iota (c_2(F))$, thus $c_2(F)$ and $c_2(F)_{(2)}$ (in the notation of \cite{CLZ26}) differ by a multiple of $o_A$, which does not change the stratum $S_i(A)$ of the filtration. Let $k$ be the degree of $c_2(F)$. By Theorem \ref{thm:orbits} and Proposition \ref{prop:MZabelian}, the rational equivalence orbit of $F$ in $K_H(v)$ is of dimension at least $k-d(v)$ if and only if $c_2(F)\in S_{d(v)-1}^{k}(A)$, which gives the equivalence between the two conjectures.

\end{proof}

\bibliography{literature}

\begin{thebibliography}{CGL22}

\bibitem[BB17]{BB17}
Arend Bayer and Tom Bridgeland.
\newblock Derived automorphism groups of {K}3 surfaces of {P}icard rank 1.
\newblock {\em Duke Math. J.}, 166(1):75--124, 2017.

\bibitem[Bea86]{Be86}
Arnaud Beauville.
\newblock Sur l'anneau de {C}how d'une vari\'et\'e ab\'elienne.
\newblock {\em Math. Ann.}, 273(4):647--651, 1986.

\bibitem[BL13]{BL13}
Christina Birkenhake and Herbert Lange.
\newblock {\em Complex abelian varieties}.
\newblock Springer Science \& Business Media, 2013.

\bibitem[Blo76]{Bl76}
Spencer Bloch.
\newblock Some elementary theorems about algebraic cycles on abelian varieties.
\newblock {\em Inventiones mathematicae}, 37(3):215--228, 1976.

\bibitem[Bri08]{Bri08}
Tom Bridgeland.
\newblock Stability conditions on {$K3$} surfaces.
\newblock {\em Duke Math. J.}, 141(2):241--291, 2008.

\bibitem[BV04]{BV04}
Arnaud Beauville and Claire Voisin.
\newblock On the {C}how ring of a {$K3$} surface.
\newblock {\em J. Algebraic Geom.}, 13(3):417--426, 2004.

\bibitem[CGL22]{CGL}
Xi~Chen, Frank Gounelas, and Christian Liedtke.
\newblock Curves on {K}3 surfaces.
\newblock {\em Duke Mathematical Journal}, 171(16):3283--3362, 2022.

\bibitem[CLZ26]{CLZ26}
Zaiyuan Chen, Zhiyuan Li, and Ruxuan Zhang.
\newblock Bloch's conjecture for equivalences between twisted abelian surfaces
  and applications.
\newblock {\em arXiv preprint arXiv:2606.22323}, 2026.

\bibitem[Huy10]{Huy10}
Daniel Huybrechts.
\newblock Chow groups of k3 surfaces and spherical objects.
\newblock {\em Journal of the European Mathematical Society}, 12(6):1533--1551,
  2010.

\bibitem[Lin16]{Lin16}
Hsueh-Yung Lin.
\newblock On the chow group of zero-cycles of a generalized kummer variety.
\newblock {\em Advances in Mathematics}, 298:448--472, 2016.

\bibitem[Mac04]{ML04}
Catriona Maclean.
\newblock Chow groups of surfaces with $h^{2,0}\leq 1$.
\newblock {\em Comptes Rendus. Math{\'e}matique}, 338(1):55--58, 2004.

\bibitem[MZ20]{MZ20}
Alina Marian and Xiaolei Zhao.
\newblock On the group of zero-cycles of holomorphic symplectic varieties.
\newblock {\em {\'E}pijournal de G{\'e}om{\'e}trie Alg{\'e}brique}, 4, 2020.

\bibitem[Oʼ13]{OG13}
Kieran~G OʼGrady.
\newblock Moduli of sheaves and the {C}how group of {K}3 surfaces.
\newblock {\em Journal de math{\'e}matiques pures et appliqu{\'e}es},
  100(5):701--718, 2013.

\bibitem[SYZ20]{SYZ20}
Junliang Shen, Qizheng Yin, and Xiaolei Zhao.
\newblock Derived categories of surfaces, o’grady’s filtration, and
  zero-cycles on holomorphic symplectic varieties.
\newblock {\em Compositio Mathematica}, 156(1):179--197, 2020.

\bibitem[Voi15]{Voi15}
Claire Voisin.
\newblock Rational equivalence of 0-cycles on k3 surfaces and conjectures of
  {H}uybrechts and {O’G}rady.
\newblock {\em Recent advances in algebraic geometry}, 417:422--36, 2015.

\bibitem[Voi16]{V16}
Claire Voisin.
\newblock Remarks and questions on coisotropic subvarieties and 0-cycles of
  hyper-{K}\"ahler varieties.
\newblock In {\em K3 surfaces and their moduli}, volume 315 of {\em Progr.
  Math.}, pages 365--399. Birkh\"auser/Springer, [Cham], 2016.

\bibitem[Yos01]{Yo01}
K{\=o}ta Yoshioka.
\newblock Moduli spaces of stable sheaves on abelian surfaces.
\newblock {\em Mathematische Annalen}, 321(4):817--884, 2001.

\end{thebibliography}
\bibliographystyle{alpha}

\end{document}